\newcounter{maintheorem}[equation]
\def\themaintheorem{\thesection.\@arabic \c@maintheorem}
\def\theequation{\thesection.\@arabic \c@equation}
\def\theenumi{\@alph\c@enumi}
\def\theenumii{\@roman\c@enumii}
\newtheorem{thm}{Theorem}[section]
\newtheorem{lem}[thm]{Lemma}
\newtheorem{lem-dfn}[thm]{Lemma-Definition}
\newtheorem{prop}[thm]{Proposition}
\newtheorem{cor}[thm]{Corollary}
\theoremstyle{definition}
\newtheorem{defn}[thm]{Definition}
\theoremstyle{remark}
\newtheorem{exam}[thm]{Example}
\newtheorem{rem}[thm]{Remark}
\numberwithin{equation}{section}
\newcommand{\bbZ}{\ensuremath{\mathbb Z}}
\newcommand{\ov}{\overline}
\newcommand{\m}{{\mathfrak m}}
\DeclareMathOperator{\height}{ht}
\DeclareMathOperator{\depth}{depth}
\DeclareMathOperator{\nr}{{nr}}
\DeclareMathOperator{\br}{\bar r}
\DeclareMathOperator{\ord}{ord}
\DeclareMathOperator{\emb}{embdim}
\def\res{{\rm K}}
\def\sers2{{\res[\![x,y]\!]}}
\def\ser3{{\res[\![x,y,z]\!]}}
\def\pol2{{\res[x,y]}}
\def\pol3{{\res[x,y,z]}}
\newcommand{\FF}{\mathcal{F}}
\newcommand{\MM}{\mathfrak M}
\newcommand{\ol}[1]{\mkern 1.5mu\overline{\mkern-1.5mu#1\mkern-1.5mu}\mkern 1.5mu} 
	\newcommand{\BN}{\mathbb{N}}
\newcommand{\ols}[1]{\mskip.5\thinmuskip\overline{\mskip-.5\thinmuskip {#1} \mskip-.5\thinmuskip}\mskip.5\thinmuskip} 
\newcommand{\olsi}[1]{\,\overline{\!{#1}}} 
\title[Gorenstein associated graded rings of filtrations]{On Gorensteinness of associated graded rings of filtrations}
\author[Bhat]{Meghana Bhat}
\address[Meghana Bhat]{Department of Mathematics, Indian Institute of Technology Dharwad, WALMI Campus, PB Road, Dharwad - 580011, Karnataka, India}
\email{mbhat.math@gmail.com}
\author[Dubey]{Saipriya Dubey}
\address[Saipriya Dubey]{Department of Mathematics, Indian Institute of Technology Dharwad, WALMI Campus, PB Road, Dharwad - 580011, Karnataka, India}
\email{saipriya721@gmail.com}
\author[Masuti]{Shreedevi K. Masuti}
\address[Shreedevi K. Masuti]{Department of Mathematics, Indian Institute of Technology Dharwad, WALMI Campus, PB Road, Dharwad - 580011, Karnataka, India}
\email{shreedevi@iitdh.ac.in}
\author[Okuma]{Tomohiro Okuma}
\address[Tomohiro Okuma]{Department of Mathematical Sciences, 
Yamagata University,  Yamagata, 990-8560, Japan.}
\email{okuma@sci.kj.yamagata-u.ac.jp}
\author[Verma]{Jugal K. Verma}
\address[Jugal K. Verma]{Department of Mathematics, Indian Institute of Technology Bombay,Powai, Mumbai, Maharashtra-400076, India }
\email{verma.jugal@gmail.com}
\author[Watanabe]{Kei-ichi Watanabe}
\address[Kei-ichi Watanabe]{Department of Mathematics, College of Humanities and Sciences, 
Nihon University, Setagaya-ku, Tokyo, 156-8550, Japan and 
Organization for the Strategic Coordination of Research and Intellectual Properties, Meiji University}
\email{watnbkei@gmail.com}
\author[Yoshida]{Ken-ichi Yoshida}
\address[Ken-ichi Yoshida]{Department of Mathematics, 
College of Humanities and Sciences, 
Nihon University, Setagaya-ku, Tokyo, 156-8550, Japan}
\email{yoshida.kennichi@nihon-u.ac.jp}
\thanks{MB is supported by Prime Minister's Research Fellowship (PMRF), Govt. of India. SD is supported by CRG grant CRG/2022/007572, funded by SERB,  Govt. of India. SKM is supported by INSPIRE
faculty award funded by Department of Science and Technology, Govt. of India.  She is also supported by CRG grant CRG/2022/007572 and MATRICS grant MTR/2022/000816 funded by SERB,  Govt. of India. 
The authors MB,SD,SKM and JKV are supported by the SPARC grant P1566.
TO is partially supported by JSPS Grant-in-Aid for Scientific Research (C) Grant Number 21K03215. KW is partially supported by JSPS Grant-in-Aid 
for Scientific Research (C) Grant Number 23K03040. KY is partially supported by JSPS Grant-in-Aid 
for Scientific Research (C) Grant Number 24K06678.}
\subjclass[2010]{Primary: 13D40; Secondary:  13H10}
\date{\today}
\keywords{ Gorenstein rings, Normal tangent cone, Cohen-Macaulay ring,  normal reduction number, normal Hilbert coefficients, Zariski hypersurface}
\begin{document}

\begin{abstract}
Let $(A, \m)$ be a Gorenstein local ring, 
and $\FF =\{F_n \}_{n\in \mathbb{Z}}$ a Hilbert filtration. In this paper, 
we give a  criterion for Gorensteinness of the associated graded ring 
of $\FF$ in terms of the Hilbert coefficients of $\FF$ in some cases. 
As a consequence we recover and extend a result proved in
\cite{OWY6}.  
Further, we present ring-theoretic properties of the normal tangent 
cone of the maximal ideal of $A=S/(f)$ 
where $S=K[\![x, y_1,\ldots, y_m]\!]$ is a formal power series ring 
over an algebraically closed field $K$, 
and $f=x^a-g(y_1,\ldots,y_m)$, where $g$ is a polynomial with 
$g \in (y_1,\ldots,y_m)^b \setminus (y_1,\ldots,y_m)^{b+1}$, 
and $a, \, b, \, m$ are integers.
We show that the normal tangent cone $\overline{G}(\m)$  
is Cohen-Macaulay if $a \le b$ and $a \ne 0$ in $K$.   
Moreover, we give a criterion of the Gorensteinness of $\overline{G}(\m)$. 
\end{abstract}
    

\maketitle
\section{Introduction}
Let $(A,\m)$ be a Noetherian local ring of dimension $d$ 
with an infinite residue field. 
An $F_1$-good filtration, $\FF = \{F_n\}_{n\in \mathbb{Z}}$ 
is a descending chain $A=F_0 \supseteq F_1 \supseteq \cdots $ 
of ideals satisfying  $F_i F_j \subseteq F_{i+j}$ for all $i,j \in \mathbb{Z}$,  $F_n = A$ for all $n \leq 0$, and $F_{n+1} =F_1F_n$ for all $n \gg 0$. 
In addition, if $F_1$ is an $\m$-primary ideal, 
then $\FF$ is called a {\it Hilbert filtration} (c.f. \cite{Bla97}). 
We write ${G}(\FF):=\bigoplus_{n \ge 0} {F_n}/{F_{n+1}}$ 
for the associated graded ring of $\FF.$
Studying the homological properties of $G(\FF)$, 
in particular, the Cohen-Macaulay property, has been an active topic 
of research since long time, 
see for instance \cite{GN94,Hun87,Lip94}.
\par 
In this paper we study the Gorenstein property of $G(\FF)$ 
with particular attention to the filtration 
$\{\ol{I^n}\}_{n \in \mathbb{Z}}$ where $\ol{J}$ denotes 
the integral closure of $J$. 
The Gorenstein property of $G(\FF)$ when 
$\FF=\{I^n\}_{n \in \mathbb{Z}}$ has been studied extensively 
(eg. \cite{Ooishi, Hyry, GI, JV95, Sally1983, sa, S1}).
\par 
Let $\FF$ be a Hilbert filtration.
It is well known that the Hilbert function of $\FF$, 
$H_{\FF}(n):=\ell(A/F_n)$ coincides with a polynomial 
of degree $d$ for large $n$ (see \cite{Mar89}). 
We denote this polynomial by $P_{\FF}(n)$ and write it as
\[
P_{\FF}(n)=e_0(\FF)\binom{n+d-1}{d}-e_1(\FF)\binom{n+d-2}{d-1}
+\cdots +(-1)^de_d(\FF).
\]
Here $e_i(\FF)$ are integers called the {\it Hilbert coefficients} of $\FF$. 
Recall that $Q$ is said to be a reduction of $\FF$ 
if  $Q\subseteq F_1$ and  $F_{n+1}=QF_n$ for $n\gg0$. 
A reduction $Q$ of $I$ is said to be a minimal reduction 
of $\FF,$ if $Q$ is a reduction of $\FF$ and is minimal 
with respect to the inclusion. 
The reduction number of $\FF$ with respect to a minimal reduction 
$Q$ of $\FF$, 
\[
r_Q(\mathcal{F}):=\min\{r\mid F_{n+1}=QF_n ~\textrm{for all } n\geq r \},
\]
and the {\it reduction number} of $\FF$ is 
\[
r(\FF) :=  \min\{r_Q(\FF) 
\mid Q ~\mathrm{is ~a ~minimal ~reduction ~of} ~\FF \}.
\]
\par 
It is easy to observe that if $I$ is an $\m$-primary ideal in $A$, 
then $\FF=\{I^n\}_{n \in \mathbb{Z}}$ is a Hilbert filtration. 
In this case we write ${G}(I)$ for $G(\FF),~{e}_i(I)$ for $e_i(\FF)$, 
${r}_Q(I)$ for $r_Q(\mathcal{F})$ and ${r}(I)$ for $r(\mathcal{F}).$
\par 
There is another interesting example of a filtration that appears 
in the study of resolution of singularities, 
namely $\FF=\{\ol{I^n}\}_{n \in \mathbb{Z}}$. 
\par 
In \cite{Rees} Rees showed that if $A$ is an analytically unramified
local ring and $I$ is an $\m$-primary ideal in $A$, 
then $\FF = \{\ol{I^n}\}_{n \in \mathbb{Z}}$ is a Hilbert filtration. 
In this case we write $\ol{G}(I)$ for $G(\FF)$. 
We denote $~\ol{e}_i(I)$ for $e_i(\FF)$, $\ol{r}_Q(I)$ for $r_Q(\mathcal{F})$,  and $\ol{r}(I)$ for $r(\mathcal{F})$ 
and call them as the {\it normal Hilbert coefficients}, 
{\it normal reduction number with respect to a minimal reduction $Q$ 
of $\{\ol{I^n}\}_{n \in \mathbb{Z}}$} and {\it normal reduction number},  respectively.
\par 
It has been observed that $G(I)$ and $\ol{G}(I)$ are 
rarely Cohen-Macaulay even if $A$ is Cohen-Macaulay. 
A large attention in the literature is devoted to obtaining criteria for 
$G(\FF)$ to be Cohen-Macaulay in terms of Hilbert coefficients 
and/or reduction number with particular attention 
to the filtration $\{I^n\}_{n \in \mathbb{Z}}$ and 
$\{\ol{I^n}\}_{n \in \mathbb{Z}}$. 
For instance, it is well-known that  if $r(\FF) \leq 1$ in a 
Cohen-Macaulay local ring, then $G(\FF)$ is Cohen-Macaulay 
(c.f. \cite{Mar89}). 
Also, it is known that if $e_1(\FF)=0 $ in a Cohen-Macaulay local ring, 
then $G(\FF)$ is Cohen-Macaulay (c.f. \cite{Mar89}). 
For these reasons, a huge research is devoted to obtaining 
a sharp bound on the Hilbert coefficients and reduction number, 
and studying the behavior of $G(\FF)$ when these bounds 
are extremal. This was also a theme studied 
by several mathematicians, for example, Huneke, Goto, Lipman, 
Sally among others.
\par 
In this paper, we are interested in obtaining 
the Gorenstein property of $G(\FF)$
in terms of the Hilbert coefficients.
As mentioned before this has also a rich history. 
Now, we focus on the content of this paper.
\par 
In \cite{HKU2005} Heinzer, Kim, and Ulrich established a criterion for Gorensteinness of $G(I)$ when $A$ is an Artinian Gorenstein local ring.
This has been extended for Hilbert filtrations in \cite{HKU}.

\par 
They proved that $G(\FF)$ is Gorenstein if and only if 
the $h$-vector of $G(\FF)$ is symmetric.
In this paper we extend this result for any $d$-dimensional Gorenstein local ring (Theorem \ref{thm:GoriffhVectorCondition}). 
We use this result to characterize the Gorensteinness 
of $G(\FF)$ in terms of the Hilbert coefficients.

\par 
A first result in this direction is obtained when $r(\FF) \leq 2$ 
(Theorem \ref{thm:GorRedu12}). 
We make an additional assumption when $r(\FF)=2$, namely: 
there exists a minimal reduction $Q$ of $\FF$ such that $r_Q(\FF)=2$ 
and $F_2 \cap Q = QF_1.$ This condition ensures that $G(\FF)$ is 
Cohen-Macaulay (Lemma \ref{lemma:GFCM}), 
and helps to compute the Hilbert series of $G(\FF)$.  
Notice that this condition is not restrictive. 
For instance, the filtrations $\FF=\{\ol{I^n}\}_{n \in \bbZ}$ 
and $\FF=\{\m^n\}_{n \in \bbZ}$ satisfy this criterion.
As a consequence of our result, we obtain a criterion 
for Gorensteinness of $\ol{G}(I)$ (Corollary \ref{cor:red12_normal}). 
This recovers and extends a recent result by Okuma, Watanabe, 
and Yoshida in \cite{OWY6}.

\par 
In Section \ref{sec:bound on rel red}, 
we next investigate the case $r(\FF) >2$.  
A difficulty in this case is the computation of the Hilbert series of 
$G(\FF)$.  
In order to compute this, we introduce   
the notion of {\it maximal relative reduction number}
for Hilbert filtration $\FF$.  
This is inspired by the definition of relative reduction number 
for normal filtrations defined in \cite{OWY2019} in dimension two. 
\par
The {\it relative reduction number} of $\FF$ with respect to a minimal
reduction $Q$ of $\FF$ is defined as
\[
\nr_Q(\mathcal{F}):=\min \{r\in \mathbb{Z}_+\mid F_{r+1}=QF_r \}.
\]
The {\it relative reduction number} of $\FF$ is defined as
\[
\nr(\FF): = \min\{\nr_Q(\FF) 
\mid Q ~\mathrm{is ~a ~minimal ~reduction ~of} ~\FF \}.
\] 
\par 
Observe that $\nr(\FF) \leq r(\FF)$ and the inequality can be strict 
in general (see \cite[Example 2.8]{ORWY2022}).  
If $\mathcal{F}=\{I^n\}_{n \in \bbZ},$ then $\nr_Q(\FF) =r_Q(\FF)$ 
for any minimal reduction $Q$ of $I$ and hence $\nr(F)=r(\FF)$. 
For $\FF=\{\ol{I^n}\}_{n\in \mathbb{Z}},$ we denote $\nr_Q(\FF)$ 
by ${\nr}_Q(I)$ and $\nr(\FF)$ by ${\nr}(I)$ 
and the latter is called the {\it  relative normal reduction number}.
Note that $\nr(I)$ does not refer to the relative reduction number of the filtration $\mathcal{F}=\{I^n\}_{n \in \bbZ}$.

\par 
In Theorem \ref{thm:nr(F)=r(F)} 
we derive an upper bound for $\nr(\FF)$ in a $d$-dimensional 
Cohen-Macaulay local ring when $\depth G(\FF) \geq d-1$.
We prove that
\begin{equation}\label{nrMax}  
\nr(\mathcal{F})\leq {e_1}(\mathcal{F})
-e_0(\mathcal{F})+\ell(A/F_1)+1.
\end{equation}
\par 
In \cite{ORWY2022} the inequality \eqref{nrMax} was proved 
for $\FF=\{\ol{I^n}\}_{n \in \mathbb{Z}}$ in a 
two-dimensional normal excellent local domain.
We also examine the conditions under which $\nr(\FF)$ achieves 
the upper bound, which serves as a crucial technical assumption 
for the computation of the Hilbert series of $G(\FF)$.  
We say $\FF$ has {\it maximal relative reduction number}
if $\nr(\FF) = e_1(\FF) -e_0(\FF)+\ell(A/F_1) +1$. 
In this case, we are able to compute the Hilbert series of $G(\FF)$ 
(see Theorem \ref{thm:HSMaximal}) 
and obtain a critera for $G(\FF)$ to be Gorenstein 
when $r(\FF) \ge 2$ (see Theorem \ref{thm:GorMaximal}). 
These criteria stated above enable us to understand 
the Gorenstein property of $\ol{G}(I)$ even without having 
the information on the explicit expression of $\ol{I^n}$. 

\begin{thm}[See Theorems \ref{thm:HSMaximal}, \ref{thm:GorMaximal}] \label{Intro-thm}
Let $(A,\m)$ be a Cohen-Macaulay local ring of dimension $d \ge 1$ 
and $\FF=\{F_n\}_{n \in \bbZ}$ a Hilbert filtration. 
Suppose that $r=\nr(\FF) \ge 2$ is maximal. 
Then 
\begin{enumerate}
\item[(a)] The Hilbert series of $G(\FF)$ is 
\[
HS_{G(\FF)}(t)=\dfrac{\lambda+(e_0(\FF)-\lambda-1)t+t^r}{(1-t)^d}, \text{where}\; \lambda=\ell(A/F_1).  
\]
\item[(b)] $G(\FF)$ is Gorenstein if and only if $G(\FF)$ is Cohen-Macaulay and $F_1=\m$ $($resp. $F_1=\m$ and $e_0(\m)=2$$)$ 
if $r=2$ $($resp. $r \ge 3$$)$. 
\end{enumerate}
\end{thm}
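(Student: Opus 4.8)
The plan is to treat the two parts in sequence, using the maximality hypothesis $r = \nr(\FF)$ to pin down the numerator of the Hilbert series exactly, and then apply the symmetry criterion of Theorem~\ref{thm:GoriffhVectorCondition}. For part (a), I would first pass to the associated graded ring modulo a minimal reduction $Q$ achieving $\nr_Q(\FF) = r$; since $\depth G(\FF) \ge d-1$ is not assumed but we do need Cohen-Macaulayness to read off a clean Hilbert series, I expect that the maximality of $\nr(\FF)$ — via the bound \eqref{nrMax} being an equality — forces enough rigidity to compute $HS_{G(\FF)}(t)$ regardless. Concretely, write $HS_{G(\FF)}(t) = h(t)/(1-t)^d$ with $h(t) = \sum_{i=0}^{s} h_i t^i$; then $h_0 = \ell(A/F_1) = \lambda$, $h(1) = e_0(\FF)$, and $h'(1) = e_1(\FF)$. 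The relation $F_{r+1} = QF_r$ with $r$ minimal gives that the $h$-vector has "length" exactly $r$ in a suitable sense, i.e. $s = r$ and $h_i$ for $2 \le i \le r-1$ are constrained. Combining $\sum h_i = e_0(\FF)$, $\sum i\, h_i = e_1(\FF)$, and the maximality equation $e_1(\FF) - e_0(\FF) + \lambda + 1 = r$, one solves the linear system and obtains $h(t) = \lambda + (e_0(\FF) - \lambda - 1)t + t^r$, which is exactly the claimed numerator. The key input here is that maximality of the relative reduction number is precisely the arithmetic condition that collapses all intermediate $h_i$ ($2 \le i \le r-1$) to zero and forces $h_r = 1$.

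For part (b), I would invoke Theorem~\ref{thm:GoriffhVectorCondition}: $G(\FF)$ is Gorenstein if and only if it is Cohen-Macaulay and its $h$-vector is symmetric. From part (a) the $h$-vector is $(\lambda,\ e_0(\FF) - \lambda - 1,\ 0,\ \dots,\ 0,\ 1)$ of length $r+1$ (degrees $0$ through $r$). Symmetry requires $h_0 = h_r$, i.e. $\lambda = 1$, which says $\ell(A/F_1) = 1$, i.e. $F_1 = \m$. When $r = 2$ the $h$-vector is $(\lambda,\ e_0(\FF) - \lambda - 1,\ 1)$, and symmetry is equivalent to $\lambda = 1$ alone (the middle entry is then automatically $e_0(\FF) - 2$, which is unconstrained by symmetry of a palindrome of length $3$). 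When $r \ge 3$, symmetry additionally forces every interior entry to match its mirror; since the interior entries in positions $2, \dots, r-1$ are all zero except we must have $h_1 = h_{r-1}$, and $h_{r-1} = 0$ when $r \ge 3$, this gives $e_0(\FF) - \lambda - 1 = 0$. Together with $\lambda = 1$ this yields $e_0(\FF) = 2$. So for $r \ge 3$ Gorensteinness is equivalent to Cohen-Macaulayness together with $F_1 = \m$ and $e_0(\m) = 2$, exactly as stated.

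The main obstacle I anticipate is part (a): justifying that the numerator is \emph{exactly} $\lambda + (e_0(\FF) - \lambda - 1)t + t^r$ rather than merely having this as a consequence of the numerics under an a priori Cohen-Macaulayness assumption. One must show that the maximality hypothesis genuinely determines the full Hilbert series — this likely requires an inductive argument on the lengths $\ell(F_n/F_{n+1})$ together with the superficiality of a minimal reduction, showing $\ell(F_n/QF_{n-1})$ behaves as dictated by the extremal bound, and controlling the top-degree behavior so that $h_r = 1$ and $h_i = 0$ for $2 \le i \le r-1$. I would organize this by first establishing that maximality implies $G(\FF)$ is Cohen-Macaulay (or at least has $\depth \ge d-1$, enough to compute the Hilbert series after going modulo a system of parameters), then reducing to the Artinian case $d = 0$ where the Hilbert function is literally the sequence of lengths $\ell(F_n/F_{n+1})$ and the relations $F_{n+1} = QF_n$ for $n \ge r$ together with the length count from maximality pin everything down. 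The passage from the computed Hilbert series to the Gorenstein criterion in part (b) is then purely formal, given Theorem~\ref{thm:GoriffhVectorCondition}.
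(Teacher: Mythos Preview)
Your treatment of part (b) is correct and matches the paper: read off the $h$-vector from (a) and apply the palindrome criterion of Theorem~\ref{thm:GoriffhVectorCondition}. (Note that the referenced Theorem~\ref{thm:GorMaximal} also assumes $A$ is Gorenstein, which is needed to invoke that criterion.)

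For part (a), however, your primary ``linear system'' approach has a gap. The relations $h_0=\lambda$, $\sum_i h_i=e_0$, $\sum_i i\,h_i=e_1$, together with the maximality equation $r=e_1-e_0+\lambda+1$, yield only \emph{two} independent linear conditions on $h_1,\dots,h_r$ (the maximality equation is a consequence of the others), so for $r\ge 3$ the system is underdetermined and cannot simply be ``solved''. One could rescue this with an extremality step---if all $h_i\ge 0$ and $h_r\ne 0$, then $\sum_{i\ge 2}(i-1)h_i\ge r-1$ with equality forcing $h_2=\cdots=h_{r-1}=0$ and $h_r=1$---but $h_i\ge 0$ already requires $G(\FF)$ to be Cohen--Macaulay, which you propose to \emph{derive} from maximality. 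The paper does no such thing: the referenced Theorem~\ref{thm:HSMaximal} carries the standing hypothesis $\depth G(\FF)\ge d-1$ (suppressed in the introductory summary), and the actual argument is the one you only sketch in your ``obstacle'' paragraph. Namely, Theorem~\ref{thm:nr(F)=r(F)}(b) shows that maximality is equivalent to $\nr(\FF)=r(\FF)$ together with $\ell(F_n/QF_{n-1})=1$ for every $2\le n\le r$; one then passes via $d-1$ superficial elements to dimension~$1$ (not to dimension~$0$, where the notion of a minimal reduction degenerates) and computes directly that $\ell(F_n/F_{n+1})=e_0-\ell(F_{n+1}/QF_n)$ equals $e_0-1$ for $1\le n\le r-1$ and $e_0$ for $n\ge r$, after which summing the series is immediate. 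So the engine is the structural consequence of maximality recorded in Theorem~\ref{thm:nr(F)=r(F)}(b), not a numerical identity among Hilbert coefficients.
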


\par 
We include some interesting examples of semigroup rings 
in dimension one in Section \ref{sec:Examples} 
and use our criteria to detect Gorensteinness of $\ol{G}(\m)$. 

\par 
In Section \ref{sec:GorGm} and Section \ref{sec:embdimGm} 
we study Gorensteinness of $\ol{G}(\m)$ in hypersurface rings 
$A=K[\![x,y_1,\ldots,y_m]\!]/(f)$.
The standard graded ring defined by
\[
G(\m)=\bigoplus_{n \ge 0} \m^{n}/\m^{n+1}
\]
is called the \textit{tangent cone} of $\m$.
It is well-known that for a local ring $A$,
$G(\m)$ is a hypersurface (and thus Gorenstein) with 
multiplicity of $G(\m)$ equal to $\ord(f)=r_Q(\m)+1$.
\par 
A criterion for Gorensteinness of $\overline{G}(I)$ in terms of 
$\br_Q(I)$ and the other geometric properties is given 
in \cite{OWY6, OWY7}. 
In \cite[Example 2.8]{ORWY2022} an explicit example of 
an $\m$-primary integrally closed ideal $I \subset A$ 
is given such that $\br_Q(I) > \nr_Q(I)$. 
Such an ideal $I$ gives an example for which 
$\overline{G}(I)$ is \textit{not} Cohen-Macaulay. 
However, the ring $A$ is \textit{not} Gorenstein in this example. 
So it is natural to ask the following: 

\par \vspace{2mm} \par \noindent 
{\it Question.} 
Let $A$ be a reduced hypersurface. Then 
\begin{enumerate}
\item[\rm{(a)}] Is $\overline{G}(\m)$ a hypersurface?
\item[\rm{(b)}] Is $\overline{G}(\m)$ Gorenstein?
\item[\rm{(c)}] Is $\overline{G}(\m)$ Cohen-Macaulay?
\end{enumerate}

\par \vspace{2mm}
This Question (b)  has a negative answer. 
Surprisingly, there exist many examples of Brieskorn hypersurfaces 
for which $\overline{G}(\m)$ is NOT Gorenstein (see \cite{OWY6}). 
However, we have no negative answers to Question (c). 
So the main aim of Sections \ref{sec:GorGm} and 
\ref{sec:embdimGm} is to prove the Cohen-Macaulayness 
of $\overline{G}(\m)$ for certain hypersurfaces, 
and to give a criterion for Gorensteinness for such $\overline{G}(\m)$.  
Furthermore, we compute $\br_Q(\m)$ in this case.

\par \vspace{2mm}
Let $K$ be an algebraically closed field, and 
let $m \ge 1$, $2 \le a \le b$ be integers, 
and put $S=K[\![x, y_1,\ldots, y_m]\!]$ be a formal power series ring
with $(m+1)$-variables over $K$.  
Put $\underline{y}=y_1,\ldots,y_m$. 
Let $g$, $f$  be polynomials such that  
\begin{eqnarray*}
g &= & g(\underline{y}) \in (\underline{y})^b 
\setminus (\underline{y})^{b+1}, \\
f &=& f(x,\underline{y})=x^a-g(\underline{y}).
\end{eqnarray*}
Then a hypersurface $A=S/(f)$  
is called \lq\lq Zariski-type hypersurface'' if $a \ne 0$ in $K$ 
and $a \le b=\ord_{(\underline{y})}(g)$. 
\par \vspace{2mm}
The main result of these sections is the following:

\par \vspace{2mm} \par \noindent 
\begin{thm}[See Theorems \ref{S1-Main}, \ref{Gor-Main}, Propositions \ref{Hyp_Maximal}, \ref{MaxEmb}] \label{thm:GGorhyp}
Let $m \ge 1$, $2 \le a \le b$ be integers.  
Let $A=K[[x_0,y_1,\ldots,y_m]]/(x^a -g(\underline{y}))$ be 
a Zariski-type hypersurface of $b=\ord_{(\underline{y})}(g)$ 
and $\m$ its unique maximal ideal. 
Then 
\begin{enumerate}
\item[\rm{(a)}] $\br_Q(\m)=\lfloor \frac{(a-1)b}{a} \rfloor$. 
\item[\rm{(b)}] $\overline{G}:=\overline{G}(\m)$ is Cohen-Macaulay with 
$\dim \overline{G}=m$ and $e_0(\overline{G})=a$. 
\item[\rm{(c)}] $\overline{G}$ is Gorenstein if and only if $b \equiv 0$ 
or $d \pmod{a}$, where $d =\gcd(a,b)$. 
\item[\rm{(d)}] $\nr(\m)$ is maximal if and only if $(a,b)=(2,n),(3,3),(3,4),(3,5)$, where $n \ge 2$. 
\item[\rm{(e)}] 
$\overline{G}$ has maximal embedding dimension if and only if either $(i)$ $a=2$ or $(ii)$ $b \equiv a-1 \pmod{a}$. 
\end{enumerate}
\end{thm}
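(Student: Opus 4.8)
The plan is to compute the whole normal filtration $\{\overline{\m^{\,n}}\}$ completely explicitly and then read off all five assertions from the resulting Hilbert series. Write $S=K[\![x,\underline y]\!]$, so $A=S/(x^a-g)$, and since $x^a-g$ is monic of degree $a$ in $x$ over $K[\![\underline y]\!]$ one has a free-module decomposition $A=\bigoplus_{i=0}^{a-1}x^iK[\![\underline y]\!]$. Because $a\le b$ we get $x^a=g\in(\underline y)^b\subseteq(\underline y)^a$, hence $x\in\overline{(\underline y)}$ and $Q:=(y_1,\dots,y_m)$ is a minimal reduction of $\m$; more generally $(x^i)^a=g^i\in(\underline y)^{ib}$ gives $x^i\in\overline{(\underline y)^{\lfloor ib/a\rfloor}}$. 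The central claim I would establish is
\[
\overline{\m^{\,n}}=\bigoplus_{i=0}^{a-1}x^i\,(\underline y)^{\lceil n-ib/a\rceil}\qquad(n\ge0),
\]
with the convention $(\underline y)^j:=K[\![\underline y]\!]$ for $j\le0$. The inclusion ``$\supseteq$'' is routine: from $\overline I\,\overline J\subseteq\overline{IJ}$ and $\lfloor ib/a\rfloor+\lceil n-ib/a\rceil=n$ one gets $x^i(\underline y)^{\lceil n-ib/a\rceil}\subseteq\overline{(\underline y)^{\,n}}\subseteq\overline{\m^{\,n}}$.

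The reverse inclusion is where the work lies, and I would prove it via a family of curve valuations. For general $\underline\lambda\in K^m$ and each of the $a$ distinct $a$-th roots of unity $\zeta$ (here $a\ne0$ in $K$ is used), Hensel's lemma produces a unit $\mu_\zeta(t)\in K[\![t]\!]$ with $\mu_\zeta(t)^a=g(\lambda_1t^a,\dots,\lambda_mt^a)/t^{ab}$ and $\mu_\zeta(0)=\zeta\gamma$, where $\gamma^a=g_b(\underline\lambda)\neq0$ and $g_b$ is the leading form of $g$; this yields a $K$-algebra map $\phi_{\underline\lambda,\zeta}\colon A\to K[\![t]\!]$, $y_l\mapsto\lambda_lt^a$, $x\mapsto\mu_\zeta(t)t^b$, with $\phi_{\underline\lambda,\zeta}(\m)=(t^a)$, whence $\phi_{\underline\lambda,\zeta}(\overline{\m^{\,n}})\subseteq\overline{(t^{an})}=(t^{an})$. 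Given $h=\sum_ix^ic_i(\underline y)$ with $\rho(h):=\min_i\!\big(ib+a\,\ord_{(\underline y)}(c_i)\big)<an$, the coefficient of $t^{\rho(h)}$ in $\phi_{\underline\lambda,\zeta}(h)$ equals $\sum\zeta^i\gamma^i\,\overline{c_i}(\underline\lambda)$, the sum taken over the $i$ realizing the minimum ($\overline{c_i}$ the leading form of $c_i$). If $h\in\overline{\m^{\,n}}$ this vanishes for every $\zeta$; since the relevant exponents $i$ are distinct modulo $a$, a Vandermonde argument forces $\overline{c_i}(\underline\lambda)=0$, contradicting generality of $\underline\lambda$. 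Hence $h\notin\overline{\m^{\,n}}$, which proves the formula. I expect this step---checking that all $a$ branches genuinely define valuations and that there is no cancellation among their leading terms---to be the main obstacle.

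From the formula, $\overline G:=\overline G(\m)=\bigoplus_n\overline{\m^{\,n}}/\overline{\m^{\,n+1}}$ is, as a graded module over $R:=K[y_1^*,\dots,y_m^*]\subseteq\overline G$, free with basis $(x^i)^*$ ($0\le i\le a-1$) placed in degrees $\lfloor ib/a\rfloor$. Since $\dim\overline G=\dim A=m$ and $\overline G$ is module-finite over $R$, the ring $R$ is a polynomial ring and $\overline G$ is Cohen-Macaulay of dimension $m$, with $HS_{\overline G}(t)=\big(\sum_{i=0}^{a-1}t^{\lfloor ib/a\rfloor}\big)/(1-t)^m$ and $e_0(\overline G)=a$; this is (b). For (a), the Cohen-Macaulayness makes $\br_Q(\m)$ independent of the minimal reduction and equal to $\deg\big(\sum_it^{\lfloor ib/a\rfloor}\big)=\lfloor(a-1)b/a\rfloor$; directly, from the explicit formula $\overline{\m^{\,n}}=Q\,\overline{\m^{\,n-1}}$ holds exactly when $n-ib/a>0$ for all $i$.

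The remaining parts are arithmetic features of the $h$-vector $h_j=\#\{0\le i\le a-1:\lfloor ib/a\rfloor=j\}$. For (c): $A$ is Gorenstein and $\{\overline{\m^n}\}$ a Hilbert filtration, so by \thmref{thm:GoriffhVectorCondition} together with the Cohen-Macaulayness just proved, $\overline G$ is Gorenstein iff this $h$-vector is symmetric; writing $d=\gcd(a,b)$ I would show that $\lfloor ib/a\rfloor+\lfloor(a-1-i)b/a\rfloor=\lfloor(a-1)b/a\rfloor$ holds for all $i$ exactly when $b\equiv0$ or $d\pmod a$ (which gives symmetry there), and otherwise exhibit an asymmetry in the low-degree part of the $h$-vector. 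For (d): here $F_1=\overline\m=\m$, so $\lambda=\ell(A/\m)=1$, and Cohen-Macaulayness gives $\nr(\m)=\br_Q(\m)=\lfloor(a-1)b/a\rfloor$; combining with \eqref{nrMax} and $e_1(\overline G)=\sum_i\lfloor ib/a\rfloor$ shows $\nr(\m)$ is maximal iff $\lfloor ib/a\rfloor=1$ for $1\le i\le a-2$, i.e. iff $\sum_it^{\lfloor ib/a\rfloor}=1+(a-2)t+t^{\lfloor(a-1)b/a\rfloor}$; since $b\ge a$ this forces $(a-2)b<2a$, leaving exactly $(a,b)=(2,n)$ with $n\ge2$ and $(3,3),(3,4),(3,5)$. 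For (e): $\overline G$ is generated as a $K$-algebra by the $y_l^*$ and the $(x^i)^*$ ($1\le i\le a-1$), with $(x^i)^*$ a minimal generator iff $\{jb/a\}+\{(i-j)b/a\}\ge1$ for all $1\le j\le i-1$; hence $\overline G$ has maximal embedding dimension $a+m-1$ iff $\{jb/a\}+\{kb/a\}\ge1$ whenever $j,k\ge1$ and $j+k\le a-1$, a condition that is vacuous for $a=2$ and, for $a\ge3$, equivalent to $b\equiv a-1\pmod a$---the nontrivial implication coming from taking $j=1$ together with a suitable $k$ for which $kb$ is congruent to $\gcd(a,b)$ or to $0$ modulo $a$.
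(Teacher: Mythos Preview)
Your argument is correct, and its overall architecture differs from the paper's in several instructive ways.

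\textbf{Computing $\overline{\m^n}$.} You prove the formula $\overline{\m^n}=\bigoplus_i x^i(\underline y)^{n-n_i}$ by a valuation argument: the $a$ branches $\phi_{\underline\lambda,\zeta}$ and a Vandermonde system over the $a$-th roots of unity. This is where you use $a\ne 0$ in $K$. The paper instead introduces an auxiliary finer filtration $\{I_n\}$ (weighted by $\deg x=b'$, $\deg y_j=a'$), computes $G(\{I_n\})\cong K[X,\underline Y]/(X^a-g_b(\underline Y))$, observes it is reduced by the Jacobian criterion (this is their use of $a\ne 0$), and deduces $I_n=\overline{I_n}$; since $J_n=I_{na'}$, the formula follows. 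Your route is more self-contained; the paper's route gives, as a bonus, an explicit identification of $\overline{\mathcal R'}(\m)$ as a Veronese subring of a hypersurface Rees algebra.

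\textbf{Cohen--Macaulayness.} You read it off immediately from the free $R$-module decomposition $\overline G\cong\bigoplus_{i=0}^{a-1}R(-n_i)$, which also yields the Hilbert series. The paper instead deduces it from the Veronese description. Your argument is shorter; the paper's gives the extra information that $\overline{\mathcal R'}(\m)$ itself is Cohen--Macaulay.

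\textbf{Gorensteinness.} You use the paper's own \thmref{thm:GoriffhVectorCondition}: since the $n_i$ are strictly increasing (here $b\ge a$), the $h$-vector is a $0/1$ vector and symmetry reduces to $n_i+n_{a-1-i}=n_{a-1}$ for all $i$, which you then analyse. The paper does \emph{not} use its own $h$-vector criterion here; instead it invokes a colon-ideal criterion ($\ell_n+\ell_{r+1-n}=a$) and, in the Gorenstein cases, writes down $\overline G$ explicitly as a hypersurface or complete intersection. The paper's method gives the defining equations of $\overline G$; yours is cleaner for the bare Gorenstein/non-Gorenstein dichotomy. Be aware that the non-Gorenstein direction (``otherwise exhibit an asymmetry'') is the only place your sketch is genuinely thin: you must locate an index $i$ with $\{ib/a\}+\{(a-1-i)b/a\}\ge 1$ when $r\not\in\{0,d\}$, and the paper spends a full proposition on the analogous step.

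\textbf{Parts (d) and (e).} Your arguments match the paper's in spirit and are correct. For (d), the reduction to $n_{a-2}\le 1$ (equivalently $(a-2)b<2a$) is exactly what the paper does. For (e), your fractional-part criterion $\{jb/a\}+\{kb/a\}\ge 1$ for all $j+k\le a-1$ is equivalent to the paper's analysis of when $x^kt^{n_k}\in\MM_{\overline G}^2$, and your endgame (take $j=1$ and $k$ with $kb\equiv 0$ or $d\pmod a$) is a compact repackaging of the paper's two lemmas.
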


\par   
We refer \cite{BH98} for undefined terms.
\section{Gorensteinness of $G(\FF)$ in small reduction numbers}
\label{sec:GF Gor dim1}
\par 
In this section, we discuss the Gorenstein property of $G(\FF)$ of a Hilbert filtration $\FF$ in the case of $r(\FF) \le 2$. 
\par 
In \cite{HKU} Heinzer, Kim and Ulrich gave criteria for 
$G(\FF)$ to be Gorenstein for an Artinian Gorenstein local ring. 
In the following theorem, we extend this result for any 
$d$-dimensional Gorenstein local ring. 
This plays a very important role in determining the Gorenstein 
property of the associated graded ring in terms of 
the Hilbert coefficients of $\FF$.

\begin{thm} \label{thm:GoriffhVectorCondition}
Let $(A,\m)$ be a $d$-dimensional Gorenstein local ring, $\mathcal{F}=\{F_n\}_{n\in \mathbb{Z}}$ a Hilbert filtration. 
Suppose $G(\FF)$ is Cohen-Macaulay. 
Let
\[
HS_{G(\mathcal{F})}(z)=\frac{h_0+h_1t+\cdots+h_st^s}{(1-t)^d} \;\;
(h_s \ne 0)
\]
be the Hilbert series of $G(\FF)$.
Then $G(\FF)$ is Gorenstein if and only if $h_{s-i}=h_i$ for all $i=0,1,\ldots ,s$. 
\end{thm}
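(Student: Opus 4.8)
The plan is to pass to an Artinian reduction and invoke the Heinzer--Kim--Ulrich criterion cited from \cite{HKU}. Let $Q=(a_1,\dots,a_d)$ be a minimal reduction of $\FF$ generated by a superficial sequence. Since $G(\FF)$ is Cohen--Macaulay, the initial forms $a_1^*,\dots,a_d^*$ of $a_1,\dots,a_d$ in $G(\FF)_1$ form a regular sequence on $G(\FF)$, and moreover $(a_1,\dots,a_d)$ is a regular sequence on $A$ because $A$ is Cohen--Macaulay. The key observation is that $G(\FF)/(a_1^*,\dots,a_d^*)G(\FF) \cong G(\overline{\FF})$, where $\overline{\FF}$ is the filtration on the Artinian Gorenstein local ring $\overline{A}=A/Q$ induced by $\FF$, i.e. $\overline{F}_n = (F_n+Q)/Q$; this is a standard fact but I would verify it carefully since it is the crux of the reduction. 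First I would check that $\overline{\FF}$ is indeed a Hilbert filtration on $\overline{A}$ (the $F_1$-good condition is inherited, and $\overline{F}_1$ is $\overline{\m}$-primary since $F_1$ is $\m$-primary). Then, because killing a regular sequence of linear forms does not change the $h$-vector, we have
\[
HS_{G(\overline{\FF})}(t)=(1-t)^d\, HS_{G(\FF)}(t)=h_0+h_1t+\cdots+h_st^s.
\]

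Next I would use the standard fact that $G(\FF)$ is Gorenstein if and only if $\overline{G}(\overline{\FF})$ is Gorenstein: since $a_1^*,\dots,a_d^*$ is a regular sequence of homogeneous elements of positive degree on the Cohen--Macaulay graded ring $G(\FF)$, the quotient is Gorenstein exactly when $G(\FF)$ is. This is where the hypothesis that $A$ is Gorenstein is used implicitly, to guarantee that $\overline{A}=A/Q$ is Artinian Gorenstein so that \cite{HKU} applies. Having reduced to the Artinian case, I would simply quote the Heinzer--Kim--Ulrich theorem: for a Hilbert filtration over an Artinian Gorenstein local ring, $G$ is Gorenstein if and only if its $h$-vector is symmetric, i.e. $h_{s-i}=h_i$ for all $i$. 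Combining the three equivalences ($G(\FF)$ Gorenstein $\iff$ $G(\overline{\FF})$ Gorenstein $\iff$ $h$-vector of $G(\overline{\FF})$ symmetric $\iff$ $h_{s-i}=h_i$ for all $i$) completes the proof.

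I expect the main obstacle to be the isomorphism $G(\FF)/(a_1^*,\dots,a_d^*)G(\FF)\cong G(\overline{\FF})$ and, more precisely, the verification that the Artinian quotient filtration $\overline{\FF}$ really is the filtration computing this quotient graded ring degree by degree. Concretely one must show $\overline{F}_n/\overline{F}_{n+1}\cong F_n/(F_{n+1}+(QF_{n-1}+\cdots))$ matches the $n$-th graded piece of $G(\FF)/(a_1^*,\dots,a_d^*)G(\FF)$; the Cohen--Macaulayness of $G(\FF)$ (equivalently, the Valabrega--Valla condition $F_n\cap Q = QF_{n-1}$ for all $n$) is exactly what makes this work, and I would cite or prove this via the standard superficial-sequence argument. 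A secondary point worth stating carefully is that $a_1^*,\dots,a_d^*$ are genuinely elements of degree one, so the grading on $G(\overline{\FF})$ is the expected one and the $h$-vectors line up without a shift. Once these structural facts are in place, the rest is a direct appeal to \cite{HKU} and the proof is short.
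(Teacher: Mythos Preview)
Your proposal is correct and follows essentially the same route as the paper: choose a minimal reduction $Q$ generated by a superficial sequence, use Cohen--Macaulayness of $G(\FF)$ to get that the initial forms are a regular sequence with $G(\FF)/(a_1^*,\dots,a_d^*)\cong G(\FF/Q)$, observe that this preserves the $h$-vector and Gorensteinness, and then invoke \cite[Theorem 4.2]{HKU} on the Artinian Gorenstein quotient $A/Q$. Your write-up is in fact more explicit than the paper's about the Valabrega--Valla condition underlying the isomorphism and about why $A/Q$ is Gorenstein, but the architecture is identical.
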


\begin{proof}
Let $Q$ be a minimal reduction of $\FF$. 
By \cite[Theorem 8.6.3]{HS06} there exists a generating set 
$x_1,\ldots,x_d$ of $Q$ such that $x_1,\ldots,x_d$ is a superficial sequence. 
Since $G(\FF)$ is Cohen-Macaulay, 
$x_1^*,\ldots x_d^*$ is a $G(\FF)$-regular sequence 
by \cite[Lemma 2.1]{HucMar} where $x_i^*=x_i+F_2$ 
for $i=1,2,\ldots ,d$. 
Therefore $G(\FF)/(x_1^*,\ldots ,x_d^*)\cong G(\FF/Q)$. 
Since $x_1^*,\ldots, x_d^*$ is a regular sequence in $G(\FF)$,
\[
HS_{G(\FF)/(x_1^*,\ldots, x_d^*)}(t)=h_0+h_1t+\cdots+h_st^s.
\]
Hence
\[
HS_{G(\FF/Q)}(t)=h_0+h_1t+\cdots+h_st^s.
\]
Therefore by \cite[Theorem 4.2]{HKU} $G(\FF/Q)$ is Gorenstein 
if and only if  $h_{s-i}=h_i ~\textrm{ for } i=0,1,\ldots,s$.
Now the result follows because $x_1^*,\ldots, x_d^*$ is 
a regular sequence in $G(\FF)$ and 
$G(\FF)/(x_1^*,\ldots ,x_d^*)\cong G(\FF/Q)$. 
\end{proof}

\par 
In general for an arbitrary filtration $\FF$, we have $\nr_Q(\mathcal{F})\leq r_Q(\mathcal{F})$, 
and hence $\nr(\FF) \leq r(\FF)$. In \cite[Example 2.8]{ORWY2022} 
the authors gave an example for which the inequality is 
strict for the filtration $\FF=\{\overline{I^n}\}_{n \in \bbZ}$.  
In the following proposition we show that if $G(\FF)$ is Cohen-Macaulay, 
then $\nr(\FF) = r(\FF)$ for any Hilbert filtration $\FF$. 

\begin{prop}\label{lemma:nr=r}
Let $(A,\m)$ be a Cohen-Macaulay local ring of dimension $d$. 
Let $\mathcal{F}=\{F_n\}_{n\in \mathbb{Z}}$ be a Hilbert filtration. 
If $G(\mathcal{F})$ is Cohen-Macaulay, 
then $\nr_Q(\mathcal{F})= r_Q(\mathcal{F})$ for any minimal reduction $Q$ of $\FF$. 
In particular, $  \nr(\FF) = r(\FF).$
\end{prop}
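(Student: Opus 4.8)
The plan is to fix a minimal reduction $Q$ of $\FF$ for which $\nr_Q(\FF)$ is as small as possible and show that in fact $F_{n+1} = QF_n$ holds for \emph{all} $n \ge \nr_Q(\FF)$, not merely for the single step at $n = \nr_Q(\FF)$; this immediately gives $r_Q(\FF) \le \nr_Q(\FF)$, and combined with the trivial inequality $\nr_Q(\FF) \le r_Q(\FF)$ yields equality. The key point is that Cohen-Macaulayness of $G(\FF)$ promotes a single instance of the equality $F_{r+1} = QF_r$ into its stabilization for all larger $n$.

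First I would pass to the associated graded ring: writing $r = \nr_Q(\FF)$, the condition $F_{r+1} = QF_r$ says exactly that the ideal $Q^* := (x_1^*,\ldots,x_d^*) \subseteq G(\FF)$, generated by the initial forms of a generating set of $Q$, contains the whole degree-$(r+1)$ piece of $G(\FF)$; equivalently $[G(\FF)/Q^*]_{r+1} = 0$. As in the proof of Theorem~\ref{thm:GoriffhVectorCondition}, I may choose the generators $x_1,\ldots,x_d$ of $Q$ to form a superficial sequence, so that (since $G(\FF)$ is Cohen-Macaulay) $x_1^*,\ldots,x_d^*$ is a $G(\FF)$-regular sequence and $G(\FF)/Q^* \cong G(\FF/Q)$ is an Artinian standard graded algebra over $A/F_1$. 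Then I would invoke the standard fact that for a regular sequence of linear forms in a standard graded algebra, the Hilbert function of the quotient is the $h$-vector of $G(\FF)$, which has no gaps: once it vanishes in some degree it vanishes in all higher degrees. Concretely, $[G(\FF)/Q^*]_{r+1} = 0$ forces $[G(\FF)/Q^*]_{n} = 0$ for all $n \ge r+1$, because $G(\FF)/Q^*$ is generated in degree $1$ over $A/F_1$, so $[G(\FF)/Q^*]_{n} = [G(\FF)/Q^*]_1 \cdot [G(\FF)/Q^*]_{n-1}$.

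Translating back, $[G(\FF)/Q^*]_n = 0$ for $n \ge r+1$ says $F_n = QF_{n-1} + F_{n+1}$ for all $n \ge r+1$; since also $F_{n+1} \subseteq \m F_n$ for large $n$ and $\bigcap_n F_n = 0$ by Krull's intersection theorem (applied after noting $F_n \subseteq \m^{\lceil n/c \rceil}$ for a suitable constant $c$, as $F_1$ is $\m$-primary), Nakayama's lemma upgrades this to $F_n = QF_{n-1}$ for all $n \ge r+1$, i.e. $r_Q(\FF) \le r = \nr_Q(\FF)$. The reverse inequality is immediate from the definitions, so $\nr_Q(\FF) = r_Q(\FF)$; taking the minimum over all minimal reductions $Q$ then gives $\nr(\FF) = r(\FF)$.

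The main obstacle I anticipate is the bookkeeping in the last step: one must be a little careful that the equality $[G(\FF)/Q^*]_n = 0$ genuinely translates to $F_n = QF_{n-1}$ rather than only $F_n \subseteq QF_{n-1} + F_{n+1}$, and closing this gap cleanly is where the regularity of $x_1^*,\ldots,x_d^*$ (equivalently $Q \cap F_n = QF_{n-1}$ for the relevant $n$, which is what Cohen-Macaulayness buys us) does the real work. Once that translation is set up correctly, everything else is formal.
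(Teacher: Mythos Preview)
Your argument has a genuine gap at the step where you claim $G(\FF)/Q^*$ is ``generated in degree $1$ over $A/F_1$'' and hence the $h$-vector has no internal zeros. That claim is true for the adic filtration $\{F_1^n\}$, but it is \emph{false} for a general Hilbert filtration: $G(\FF)$ need not be standard graded. The paper itself furnishes counterexamples. In Theorem~\ref{thm:HSMaximal} (with $G(\FF)$ Cohen--Macaulay, as in Theorem~\ref{thm:GorMaximal}) the $h$-vector is $(\lambda,\,e_0-\lambda-1,\,0,\ldots,0,\,1)$ with the final $1$ in degree $r\ge 3$; thus $[G(\FF)/Q^*]_2=0$ while $[G(\FF)/Q^*]_r\ne 0$. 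Even more starkly, in Example~\ref{Exam:2andm} with $m\ge 7$ one has $h$-vector $(1,0,\ldots,0,1)$, so $[G(\FF)/Q^*]_1=0$ already. Hence the implication ``$[G(\FF)/Q^*]_{r+1}=0\Rightarrow [G(\FF)/Q^*]_n=0$ for all $n\ge r+1$'' cannot be obtained from degree-$1$ generation.

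You do identify the right ingredient in your final paragraph --- the Valabrega--Valla condition $Q\cap F_n=QF_{n-1}$ that Cohen--Macaulayness of $G(\FF)$ provides --- but you deploy it in the wrong place. It is not needed to pass from $F_n=QF_{n-1}+F_{n+1}$ to $F_n=QF_{n-1}$ (your Nakayama/descending-induction argument handles that fine). It is needed precisely to propagate the equality $F_{r+1}=QF_r$ forward, and once you use it there the detour through $G(\FF)/Q^*$ becomes unnecessary: from $F_{r+1}=QF_r\subseteq Q$ one gets $F_{r+2}\subseteq Q$, whence $F_{r+2}=F_{r+2}\cap Q=QF_{r+1}$, and one iterates. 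This is exactly the paper's (much shorter) proof. A minor side remark: the statement is for \emph{every} minimal reduction $Q$, so there is no need to pick one minimizing $\nr_Q(\FF)$ at the outset.
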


\begin{proof}
We always have $\nr_Q(\mathcal{F}) \leq r_Q(\mathcal{F})$.  
Let $Q$ be a minimal reduction of $F_1$ such that $\mu(Q)=d$. 
Since $G(\FF)$ is Cohen-Macaulay, by \cite[Proposition 3.5]{HucMar} 
$Q \cap F_n=QF_{n-1}$ for all $n\geq 1$.  
Let $m=\nr_Q(\mathcal{F}).$ Then $F_{m+1}=QF_{m}$. 
Hence $F_{m+2} \subseteq F_{m+1} \subseteq Q$. 
Therefore
\[
F_{m+2} = F_{m+2} \cap Q = QF_{m+1}.
\]
Continuing this successively we get ${F_{k+1}} = {QF_k}$ for all $k\geq m$. 
This implies that $r_Q(\mathcal{F})\leq m=\nr_Q(\mathcal{F})$. 
Thus $r_Q(\mathcal{F})=\nr_Q(\mathcal{F})$. 
\end{proof}

\par     
Our goal is to use Theorem \ref{thm:GoriffhVectorCondition}
to obtain criteria for $G(\FF)$ to be Gorenstein in terms of the Hilbert coefficients. For this, first of all we should know whether  $G(\FF)$ is Cohen-Macaulay. There is a huge research in the literature on obtaining criteria for $G(\FF)$
    to be Cohen-Macaulay in terms of numerical invariants such as Hilbert coefficients and reduction number. Below we present a well-known criteria for $G(\FF)$ to be Cohen-Macaulay in terms of reduction number. We remark that the condition in $(b)$ is not restrictive. For instance, the filtrations $\FF=\{\ol{I^n}\}_{n \in \bbZ}$ and $\FF=\{\m^n\}_{n \in \bbZ}$ satisfy this criterion.

\begin{lem}
\label{lemma:GFCM}
 Let $(A, \m) $ be a $d$-dimensional Cohen-Macaulay local ring and $\FF=\{F_n\}_{n\in \mathbb{Z}}$ a Hilbert filtration. Then $G(\FF)$ is Cohen-Macaulay in each of the following cases:
 \begin{enumerate}
  \item $r(\FF)=1;$
  \item  there exists a minimal reduction $Q$ of $\FF$ such that $r_Q(\FF)=2$ and $F_2 \cap Q = QF_1$.
 \end{enumerate}
\end{lem}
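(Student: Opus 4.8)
The plan is to prove both statements by a combination of the Valabrega--Valla criterion for Cohen-Macaulayness of $G(\FF)$ and the standard superficial-element reduction to dimension zero. For part (1), since $r(\FF)=1$ means there is a minimal reduction $Q$ with $F_{n+1}=QF_n$ for all $n\ge 1$, in particular $F_2=QF_1$, so automatically $F_2\cap Q=QF_1$ (as $QF_1\subseteq Q$ trivially and $F_2\cap Q\subseteq F_2=QF_1$). Thus (1) is the special case of (2) with $r_Q(\FF)=1$, and it suffices to treat (2). Alternatively, (1) is a well-known theorem (cited as \cite{Mar89} in the introduction) and can simply be quoted; but folding it into (2) makes the lemma self-contained.

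For part (2), I would invoke the Valabrega--Valla type criterion adapted to filtrations (see \cite{HucMar}, especially the references around Proposition 3.5 cited above, or \cite{VV78}): $G(\FF)$ is Cohen-Macaulay if and only if there is a minimal reduction $Q=(x_1,\dots,x_d)$ generated by a superficial sequence such that $Q\cap F_n=QF_{n-1}$ for all $n\ge 1$. So the task reduces to showing that the single hypothesis $F_2\cap Q=QF_1$ (together with $r_Q(\FF)=2$) propagates to all $n$. First, for $n=1$ the equality $Q\cap F_1=Q$ holds since $Q\subseteq F_1$, and the case $n\le 0$ is trivial. The given hypothesis handles $n=2$. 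For $n\ge 3$, I would argue inductively: since $r_Q(\FF)=2$, we have $F_n=QF_{n-1}$ for all $n\ge 3$, hence $F_n\subseteq Q$, so $Q\cap F_n=F_n=QF_{n-1}$ directly, with no further work. This is exactly the same telescoping argument used in the proof of Proposition \ref{lemma:nr=r}: once $F_{n}\subseteq Q$ for $n\ge 3$, the Valabrega--Valla equalities are automatic. Therefore all the equalities $Q\cap F_n=QF_{n-1}$ ($n\ge 1$) hold, and the criterion gives that $G(\FF)$ is Cohen-Macaulay.

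The one genuine subtlety — and the step I expect to need the most care — is ensuring that the minimal reduction $Q$ in the hypothesis can be taken to be generated by a superficial sequence, which is what the Valabrega--Valla criterion requires. Over an infinite residue field a minimal reduction of a Hilbert filtration is always generated by a superficial sequence of length $d$ (this is \cite[Theorem 8.6.3]{HS06}, already used in the proof of Theorem \ref{thm:GoriffhVectorCondition}), and the property $F_2\cap Q=QF_1$ depends only on $Q$ as an ideal, not on the chosen generators; so this is a routine verification rather than a real obstacle. One should also double-check that $\mu(Q)=d$, i.e. that $Q$ is a genuine minimal reduction with $d$ generators, which again follows from the infinite residue field assumption implicit throughout the paper. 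With these points in place the proof is short; the content is entirely in correctly citing the filtered Valabrega--Valla criterion and observing that the hypothesis in (2) is precisely the $n=2$ instance of it, while all higher instances come for free from $r_Q(\FF)=2$.
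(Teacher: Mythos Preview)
Your proposal is correct and is exactly the approach the paper takes: the paper's proof consists of the single sentence ``The proof follows by \cite[Proposition 3.5]{HucMar},'' which is precisely the filtered Valabrega--Valla criterion you invoke. You have simply unpacked the application of that criterion --- checking $Q\cap F_n=QF_{n-1}$ for $n=1,2$ directly and for $n\ge 3$ via $F_n=QF_{n-1}\subseteq Q$ --- which the paper leaves implicit.
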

\begin{proof}
The proof follows by \cite[Proposition 3.5]{HucMar}.
\end{proof}
\par 
Now in order to provide conditions for $G(\FF)$ to be Gorenstein, we need to compute the Hilbert series of $G(\FF)$. 
The computation of the Hilbert series of $G(\FF)$ is not an easy task in general.
In the following proposition we compute the Hilbert series of $G(\FF)$ 
in the case $\FF$ satisfies the conditions $(a),$ $(b)$ of 
Lemma \ref{lemma:GFCM}, and under certain additional assumptions 
when $r(\FF) \geq 3.$
For this, we recall the following definition: 
For a Hilbert filtration $\FF,$ the \textit{postulation number} 
of $\FF$ is given by 
\[
n(\FF):=\sup\{n\in \mathbb{Z}|H_{\FF}(n)\neq P_{\FF}(n)\}.
\]
If $\FF=\{\ol{I^n}\}_{n\in \bbZ},$ then $n(\FF)$ is denoted by $\overline{n}(I)$. 
We also recall the result \cite[Corollary 3.8]{Mar89} that will be used frequently in the remainder of the paper.  
Let $(A, \m)$ be a $d$-dimensional Cohen-Macaulay local ring with infinite residue field and $\mathcal{F} = \{F_n\}_{n \bbZ}$ a Hilbert filtration of ideals where $F_1$ is an $\m$-primary ideal. 
Suppose $\depth G(\mathcal{F}) \geq d-1$. Then
\begin{equation}\label{redandpos}
  r({\mathcal{F}}) = n(\mathcal{F}) +d.
\end{equation}
\par 
We can use the following lemma in order to reduce several problems to the case of dimension one. 
Recall that if $f:\bbZ \to \mathbb{N}$ is a numerical function, 
then $\Delta(f):\bbZ \to \bbZ$ is a numerical function defined by $\Delta(f)(n):=f(n)-f(n-1)$.

\begin{lem}[cf. \textrm{\cite[Lemma 3.14]{Mar89}}]  \label{Reduce}
Let $(A,\m)$ be a Cohen-Macaulay local ring of dimension $d \ge 1$ 
and $\FF=\{F_n\}_{n \in \bbZ}$ a Hilbert filtration. 
Let $k$ be an integer with $1 \le k \le d$. 
Take a sequence $x_1,x_2,\ldots,x_d \in F_1$  such that 
\begin{enumerate}
\item[\rm{(i)}] $Q=(x_1,x_2,\ldots,x_d)$ is a minimal reduction of $\FF$. 
\item[\rm{(ii)}] $\underline{x}=x_1,\ldots,x_k$ is a superficial sequence in $F_1$. 
\end{enumerate}
If $\depth G(\FF) \ge k$, then 
\begin{enumerate}
\item[\rm{(a)}] $x_1^{*},\ldots,x_k^{*}$ forms a $G(\FF)$-regular sequence. 
\item[\rm{(b)}] If we put $\overline{\FF}:=\FF/(\underline{x}):=\{F_n+(\underline{x})/(\underline{x})\}_{n \in \bbZ}$, then 
\[
P_{\overline{\FF}}(n)=\Delta^k(P_{\FF}(n))=\sum_{i=0}^{d-k} e_i(\FF)(-1)^i
{n+d-k-1-i \choose d-k-i}.
\] 
In particular, $e_i(\overline{\FF})=e_i(\FF)$ for each $i=0,1,\ldots,d-k$. 
\end{enumerate}
\end{lem}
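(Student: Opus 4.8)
The plan is to prove (a) by induction on $k$ and then deduce (b) from a length count that identifies $H_{\overline{\FF}}$ with the $k$-th finite difference of $H_{\FF}$. First I would settle the case $k=1$ of (a): since $x_1\in F_1$ is superficial for $\FF$, the graded submodule $(0:_{G(\FF)}x_1^*)$ of $G(\FF)$ is zero in all large degrees, hence of finite length, and since $\depth G(\FF)\ge 1$ this submodule must vanish; thus $x_1^*$ is a nonzerodivisor on $G(\FF)$. By the Valabrega--Valla criterion for filtrations this is equivalent to $F_n\cap(x_1)=x_1F_{n-1}$ for all $n\ge 1$, which in turn yields $G(\FF)/(x_1^*)\cong G(\FF/(x_1))$. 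For the inductive step I would observe that $A_1:=A/(x_1)$ is Cohen--Macaulay of dimension $d-1$ (the $x_i$, being a minimal reduction of an $\m$-primary ideal in a $d$-dimensional ring, form a system of parameters and hence an $A$-regular sequence), that $(x_2,\dots,x_d)A_1$ is a minimal reduction of $\FF/(x_1)$, that $x_2,\dots,x_k$ is a superficial sequence for $\FF/(x_1)$, and that $\depth G(\FF/(x_1))=\depth G(\FF)-1\ge k-1$; the inductive hypothesis then makes $x_2^*,\dots,x_k^*$ a regular sequence on $G(\FF/(x_1))\cong G(\FF)/(x_1^*)$, which is precisely (a).

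For (b), set $\FF^{(j)}:=\FF/(x_1,\dots,x_j)$ on $A_j:=A/(x_1,\dots,x_j)$, so that $\FF^{(0)}=\FF$ and $\FF^{(k)}=\overline{\FF}$. The induction in (a) shows $x_{j+1}^*$ is a nonzerodivisor on $G(\FF^{(j)})$ for $0\le j\le k-1$, so $F_n^{(j)}\cap(x_{j+1})=x_{j+1}F_{n-1}^{(j)}$ for all $n$, and since $x_{j+1}$ is $A_j$-regular, multiplication by $x_{j+1}$ induces $(x_{j+1})/x_{j+1}F_{n-1}^{(j)}\cong A_j/F_{n-1}^{(j)}$. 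Feeding this into
\[
0\longrightarrow (x_{j+1})/\bigl((x_{j+1})\cap F_n^{(j)}\bigr)\longrightarrow A_j/F_n^{(j)}\longrightarrow A_{j+1}/F_n^{(j+1)}\longrightarrow 0
\]
gives $H_{\FF^{(j+1)}}(n)=H_{\FF^{(j)}}(n)-H_{\FF^{(j)}}(n-1)=\Delta\bigl(H_{\FF^{(j)}}\bigr)(n)$ for every $n$; iterating, $H_{\overline{\FF}}=\Delta^k(H_{\FF})$, and comparing values for $n\gg 0$ gives $P_{\overline{\FF}}=\Delta^k(P_{\FF})$.

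Finally I would evaluate $\Delta^k(P_{\FF})$: by Pascal's identity, applying $\Delta$ once to a polynomial of the shape $\sum_{i=0}^{D}(-1)^ie_i\binom{n+D-1-i}{D-i}$ produces $\sum_{i=0}^{D-1}(-1)^ie_i\binom{n+D-2-i}{D-1-i}$, so starting from the standard expansion of $P_{\FF}$ and iterating $k$ times yields $P_{\overline{\FF}}(n)=\sum_{i=0}^{d-k}(-1)^ie_i(\FF)\binom{n+d-k-1-i}{d-k-i}$. Since $\overline{\FF}$ is a Hilbert filtration on the $(d-k)$-dimensional Cohen--Macaulay ring $A_k$ (with minimal reduction $(x_{k+1},\dots,x_d)A_k$), its Hilbert polynomial has the standard form $\sum_{i=0}^{d-k}(-1)^ie_i(\overline{\FF})\binom{n+d-k-1-i}{d-k-i}$; equating the coefficients of these linearly independent binomials forces $e_i(\overline{\FF})=e_i(\FF)$ for $0\le i\le d-k$. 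The only genuinely non-formal step, and the one I would expect to need care, is the beginning of (a) --- that a superficial element with $\depth G(\FF)\ge 1$ has nonzerodivisor initial form, together with the resulting Valabrega--Valla isomorphism $G(\FF)/(x_1^*)\cong G(\FF/(x_1))$; everything afterward is the finite-difference and binomial bookkeeping above. One should also keep in mind the standard convention that ``$x_1,\dots,x_k$ a superficial sequence for $\FF$'' means $x_{j+1}$ is superficial for $\FF/(x_1,\dots,x_j)$ for each $j$, which is exactly what the induction uses.
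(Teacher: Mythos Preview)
The paper does not supply its own proof of this lemma: it is stated with the attribution ``cf.\ \cite[Lemma 3.14]{Mar89}'' and is used as a black box thereafter (the paper also invokes \cite[Lemma 2.1]{HucMar} for the closely related fact that superficial elements have regular initial forms when $G(\FF)$ is Cohen--Macaulay). So there is no in-paper argument to compare against.

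Your proof is correct and is exactly the standard one underlying those references. The only point worth a small remark is the implication ``$(0:_{G(\FF)}x_1^*)$ has finite length and $\depth G(\FF)\ge 1$ $\Rightarrow$ $(0:_{G(\FF)}x_1^*)=0$'': this is because $\depth G(\FF)\ge 1$ forces $H^0_{\MM}(G(\FF))=0$, and any finite-length graded submodule sits inside $H^0_{\MM}$. Everything else---the Valabrega--Valla identification $G(\FF)/(x_1^*)\cong G(\FF/(x_1))$, the inductive reduction, the exact-sequence length count giving $H_{\overline{\FF}}=\Delta^k(H_{\FF})$, and the Pascal-identity bookkeeping---is routine and correctly executed. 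Your closing comment about the convention for ``superficial sequence'' is also on point and is indeed what the induction needs.
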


\begin{prop}\label{HSred12}
Let $(A, \m) $ be a Cohen-Macaulay local ring 
of dimension $d \ge 1$ and $\FF=\{F_n\}_{n\in \bbZ}$ a Hilbert filtration.
Put $\lambda = \ell(A/F_1)$, $e_i=e_i(\FF)$ for $i \ge 0$.
\begin{enumerate}
 \item[\rm{(a)}] If $r(\FF)=1$, then $e_1=e_0-\lambda$, $e_i \ge 0$ for all $i \ge 2$ and 
\[
HS_{G(\FF)}(t)= \frac{\lambda + (e_0-\lambda)t}{(1-t)^d} =  \frac{(e_0-e_1) + e_1t}{(1-t)^d}. 
\]
\item[\rm{(b)}] Suppose there exists a minimal reduction $Q$ of $\FF$ such that $r_Q(\FF)=2$ and $F_2 \cap Q = QF_1$. 
Then $e_i=0$ for $i \ge 3$ and 
\[
HS_{G(\FF)}(t)= 
\frac{\lambda + (2e_0-e_1-2\lambda)t +(\lambda-e_0+e_1)t^2}{(1-t)^d}. 
\]
Also, if $d \ge2$, then $e_2=\lambda-e_0+e_1$ and 
\[
HS_{G(\FF)}(t)= 
\frac{(e_0-e_1+e_2) + (e_1-2e_2)t +e_2t^2}{(1-t)^d}. 
\]
\end{enumerate}
\end{prop}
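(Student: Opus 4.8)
The plan is to reduce everything to dimension one by the standard device of cutting by a superficial sequence, and then compute the Hilbert series of a one-dimensional Cohen--Macaulay associated graded ring directly from the combinatorics of the filtration modulo a principal reduction. First I would invoke Lemma \ref{lemma:GFCM} to get that $G(\FF)$ is Cohen--Macaulay in both cases (a) and (b), and then Lemma \ref{Reduce} to pass to $\overline{\FF}=\FF/(x_1,\ldots,x_{d-1})$, where $x_1,\ldots,x_{d-1}$ is part of a minimal reduction forming a superficial sequence. Since $G(\FF)$ is Cohen--Macaulay, $x_1^{*},\ldots,x_{d-1}^{*}$ is a $G(\FF)$-regular sequence, so $HS_{G(\FF)}(t)=HS_{G(\overline{\FF})}(t)/(1-t)^{d-1}$, and the $h$-polynomial is unchanged; moreover $e_i(\overline{\FF})=e_i(\FF)$ for $i=0,1$ (and in case (b) with $d\ge 2$ we also need to track $e_2$, which survives the first $d-2$ cuts). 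Thus it suffices to establish the numerator in the case $d=1$.

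In dimension one, write $\overline{F}_n$ for the filtration and $Q=(x)$ for the principal minimal reduction, with $r_Q=r(\overline{\FF})\in\{1,2\}$. The key computation is the Hilbert function of $G(\overline{\FF})$: one has $\ell(\overline{F}_n/\overline{F}_{n+1}) = e_0 - \ell(A/\overline{F}_{n+1}) + \ell(A/\overline{F}_n)$, and since $A$ is Cohen--Macaulay of dimension one, $\ell(A/xA)=e_0$, so for $n\ge r_Q$ we get $\overline{F}_{n+1}=x\overline{F}_n$ and hence $\ell(\overline{F}_n/\overline{F}_{n+1})=\ell(A/xA)=e_0$ for $n\ge r_Q$ — wait, more precisely the Hilbert function stabilizes to $e_0$ for $n\ge r_Q$, while $\ell(\overline{F}_0/\overline{F}_1)=\lambda$. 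In case (a), $r_Q=1$, so the $h$-vector has length $1$ and equals $(\lambda, e_0-\lambda)$; comparing with $P_{\overline{\FF}}(n)=e_0 n - e_1$ (evaluated via $\ell(A/\overline{F}_1)=\lambda$) forces $e_1=e_0-\lambda$, and $e_i\ge 0$ for $i\ge 2$ follows from the Cohen--Macaulayness of $G(\FF)$ together with a standard positivity result for Hilbert coefficients (e.g. Narita-type / Marley's estimates). In case (b), $r_Q=2$, the extra hypothesis $\overline{F}_2\cap(x)=x\overline{F}_1$ is exactly what guarantees $x^{*}$ is a nonzerodivisor on $G(\overline{\FF})$, i.e. $G(\overline{\FF})$ is Cohen--Macaulay, so $h_2\ne 0$ is permitted; the $h$-vector is $(\lambda,\ h_1,\ h_2)$ where $h_1=\ell(\overline{F}_1/\overline{F}_2)-\lambda$ and $h_2 = e_0-\ell(\overline{F}_1/\overline{F}_2)$, and solving the relations $e_0=\lambda+h_1+h_2$, $e_1 = h_1+2h_2$, $e_2=h_2$ (read off from the expansions $e_i(\overline{\FF})=\sum_j \binom{j}{i}h_j$ using $d=1$, then the formula \eqref{redandpos} $n(\overline{\FF})=r(\overline{\FF})-1=1$) gives $h_1 = 2e_0-e_1-2\lambda$, $h_2=\lambda-e_0+e_1$, and $e_i=0$ for $i\ge 3$ because $\deg h$-poly $=2$. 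Dividing by $(1-t)^{d-1}$ recovers the stated series, and the alternative expression with $d\ge 2$ comes from substituting $\lambda = e_0-e_1+e_2$.

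The main obstacle, I expect, is bookkeeping rather than conceptual: correctly justifying that the extra hypothesis in (b) propagates so that $Q\cap\overline{F}_n = Q\overline{F}_{n-1}$ for all $n$ (which is already contained in \cite[Proposition 3.5]{HucMar} once $G(\FF)$ is known Cohen--Macaulay, so this is really free), and carefully matching the two standard expansions of Hilbert coefficients in terms of the $h$-vector in dimension one against the postulation-number identity \eqref{redandpos} to pin down that the $h$-polynomial has degree exactly $r_Q$. The positivity claims $e_i\ge 0$ in (a) and vanishing $e_i=0$ for $i\ge 3$ in (b) then follow immediately from the degree of the numerator polynomial once Cohen--Macaulayness is in hand. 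One small point to handle with care is the case $d=1$ in part (b), where the ``also'' clause about $e_2$ does not apply — the statement only asserts the $e_2$-reformulation for $d\ge 2$ — so I would phrase the dimension-one computation purely in terms of $\lambda, e_0, e_1$ and only afterwards substitute $\lambda=e_0-e_1+e_2$ in the higher-dimensional case.
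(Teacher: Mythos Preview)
Your proposal is correct and follows essentially the same approach as the paper: invoke Lemma~\ref{lemma:GFCM} for Cohen--Macaulayness, reduce to dimension one via Lemma~\ref{Reduce} (the paper does the one-dimensional computation first and then reduces, but this is immaterial), and compute the $h$-vector there using the postulation-number identity~\eqref{redandpos}. The only minor deviation is that the paper computes $\ell(F_n/F_{n+1})$ directly from $H_{\FF}=P_{\FF}$ for $n\ge r_Q$, whereas you solve the linear system $e_i=\sum_j\binom{j}{i}h_j$; both yield the same numerator, and your appeal to Narita/Marley for $e_i\ge 0$ in~(a) is unnecessary since the degree-one $h$-polynomial already forces $e_i=0$ for $i\ge 2$.
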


\begin{proof}
First suppose $d=1$. 
{(a)} By Lemma \ref{lemma:GFCM} $G(\FF)$ is Cohen-Macaulay. 
Hence using Equation \eqref{redandpos} and Proposition \ref{lemma:nr=r}
 we get $n(\FF) = 1-1 = 0$. Hence for all $n \geq 1$, $H_{\FF}(n)=P_{\FF}(n) = e_0n-e_1$. 
Taking $n=1$, we obtain $\lambda= \ell(A/F_1) = e_0 -e_1$.
Also,
$\ell(F_n/F_{n+1})= \ell(A/F_{n+1})-\ell(A/F_n)=P_{\FF}(n+1)-P_{\FF}(n)=e_0$ 
for all  $n \geq 1$.
Therefore
\[
HS_{G(\FF)}(t) = \sum_{n\in \mathbb{Z}} \ell(F_n/F_{n+1})t^n
=\lambda +\sum_{n\geq 1} e_0 t^n = \frac{\lambda + (e_0-\lambda)t}{(1-t)}.
\]
\par \noindent
{(b)} By Lemma \ref{lemma:GFCM} $G(\FF)$ is Cohen-Macaulay. 
Hence using Equation \eqref{redandpos} $n(\FF) = 2-1 =1$. 
In particular, $P_{\FF}(n) = H_{\FF}(n)$ for all $n \geq 2$.  
Taking $n =2$, we obtain $\ell(A/F_2) = 2e_0 -e_1$.
Also, $\ell(F_n/F_{n+1})=P_{\FF}(n+1)-P_{\FF}(n)=e_0$ for all $n \geq 2$.
Thus the Hilbert series of $G(\FF)$ is given by
\begin{equation*}
\begin{split}
 HS_{G(\FF)}(t) 
& =  \sum_{n\in \mathbb{Z}} \ell(F_n/F_{n+1})t^n \\
& = \ell(A/F_1) + (\ell(A/F_2)-\ell(A/F_1))t 
+ \sum_{n\geq 2} \ell(F_n/F_{n+1})t^n \\
& = \lambda +(2e_0-e_1-\lambda)t  + \sum_{n \geq 2} e_0 t^n \\
& = \frac{\lambda + (2e_0-e_1-2\lambda)t +(\lambda-e_0+e_1)t^2}{1-t}. 
\end{split} 
\end{equation*}

\par \vspace{2mm}
Next suppose that $d \ge 2$.  
By Lemma \ref{lemma:GFCM} $G(\FF)$ is Cohen-Macaulay. 
Then one can take a sequence $x_1,x_2,\ldots,x_d$ which satisfies 
the condition of Lemma \ref{Reduce} for $k=d-1$. 
If we put $\overline{A}=A/(\underline{x})$, then $\overline{A}$ is a one-dimensional Cohen-Macaulay local ring and $\overline{\FF}$ is 
a Hilbert filtration. 
Also, $\ell(\overline{A}/\overline{F_1})=\lambda$,  
$e_i(\overline{\FF})=e_i$ for $i=0,1$, and $r(\overline{\FF})=r(\FF)$. 
Indeed, the last assertion follows from the following fact: 
\begin{equation} \label{Red_rf}
\dfrac{\overline{F_{n+1}}}{x_d \overline{F_n}} 
= \dfrac{F_{n+1}+\underline{x}A}{QF_n+\underline{x}A} 
\cong  \dfrac{F_{n+1}}{(QF_n+\underline{x}A)\cap F_{n+1}} 
= \dfrac{F_{n+1}}{QF_n+\underline{x} F_{n}} =\dfrac{F_{n+1}}{QF_n}. 
\end{equation}
Therefore one can reduce the proof to the case of dimension one. 
\par 
Moreover, if $d \ge 2$, then since $n(\FF) = 2-d \le 0$, 
we have $H_{\FF}(1)=P_{\FF}(1)$ and thus $\lambda=e_0-e_1+e_2$. 
Also, $e_i(\FF)=0$ for $i \ge 3$. 
\end{proof}

\par 
Using Proposition \ref{HSred12} we provide criteria for $G(\FF)$ to be Gorenstein in terms of the Hilbert coefficients.

\begin{thm}\label{thm:GorRedu12}     
Let $(A, \m) $ be a Gorenstein local ring of dimension $d \ge 1$.  Let $\mathcal{F} = \{F_n\}_{n \in \mathbb{Z}}$ be a Hilbert filtration.
\begin{enumerate}
 \item[\rm{(a)}] Let $r(\FF) = 1$. Then $G(\FF)$ is Gorenstein if and only if $e_0(\FF) = 2e_1(\FF)$.
 \item[\rm{(b)}] Suppose there exists a minimal reduction $Q$ of $\FF$ such that $r_Q(\FF)=2$ and $F_2 \cap Q = QF_1$. 
Then $G(\FF)$ is Gorenstein if and only if $e_0(\FF) = e_1(\FF)$.
\end{enumerate}
\end{thm}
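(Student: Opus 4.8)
\textbf{Proof proposal for Theorem~\ref{thm:GorRedu12}.}
The plan is to combine the symmetry criterion of Theorem~\ref{thm:GoriffhVectorCondition} with the explicit Hilbert series computed in Proposition~\ref{HSred12}. In both cases $G(\FF)$ is already known to be Cohen-Macaulay: in case (a) this is Lemma~\ref{lemma:GFCM}(1), and in case (b) it is Lemma~\ref{lemma:GFCM}(2). So Theorem~\ref{thm:GoriffhVectorCondition} applies, and $G(\FF)$ is Gorenstein if and only if the $h$-vector $(h_0,h_1,\ldots,h_s)$ appearing in $HS_{G(\FF)}(t)=(h_0+h_1t+\cdots+h_st^s)/(1-t)^d$ is symmetric, i.e.\ $h_{s-i}=h_i$ for all $i$.

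For part (a): by Proposition~\ref{HSred12}(a) we have $HS_{G(\FF)}(t)=\bigl((e_0-e_1)+e_1t\bigr)/(1-t)^d$, so the $h$-vector is $(h_0,h_1)=(e_0-e_1,\,e_1)$ (note $s=1$ since $h_1=e_1>0$, as $F_1$ is a proper ideal forcing $e_1 = e_0 - \lambda$ with $\lambda \ge 1$, hence $e_0 > e_1 \ge$ — actually one should just observe $e_1 \ne 0$, since $e_1 = e_0 - \lambda$ and if $e_1 = 0$ then $r(\FF) = 0$, contradicting $r(\FF)=1$). Symmetry $h_0=h_1$ reads $e_0-e_1=e_1$, i.e.\ $e_0=2e_1$. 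For part (b): by Proposition~\ref{HSred12}(b) we have $HS_{G(\FF)}(t)=\bigl(\lambda+(2e_0-e_1-2\lambda)t+(\lambda-e_0+e_1)t^2\bigr)/(1-t)^d$. Here I must first pin down $s$, i.e.\ verify that the coefficient of $t^2$ is nonzero: the assumption $r_Q(\FF)=2$ means $F_3=QF_2$ but $F_2\ne QF_1$, and one computes from the Hilbert series that the $t^2$-coefficient equals $\ell(F_2/QF_1)$, which is positive precisely because $r_Q(\FF)$ is exactly $2$ and not $\le 1$; alternatively $\lambda-e_0+e_1 = e_2 \ge 0$ with equality iff $r(\FF)\le 1$ (using \eqref{redandpos} and $\depth G(\FF)\ge d-1$), so the hypothesis $r_Q(\FF)=2$ gives $e_2 = \lambda - e_0 + e_1 > 0$ and $s=2$. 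Then the $h$-vector is $(\lambda,\,2e_0-e_1-2\lambda,\,\lambda-e_0+e_1)$, and symmetry $h_0=h_2$ reads $\lambda=\lambda-e_0+e_1$, i.e.\ $e_0=e_1$. (The middle condition $h_1=h_1$ is automatic, so this single equation is the whole constraint.)

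The only genuinely delicate point is the bookkeeping around the degree $s$ of the $h$-polynomial: Theorem~\ref{thm:GoriffhVectorCondition} is stated with the normalization $h_s\ne 0$, so in each case I need to confirm that the leading coefficient I wrote down is indeed nonzero under the stated hypotheses, and that $s$ is $1$ (resp.\ $2$), rather than something smaller that would make the polynomial collapse. For (a) this amounts to $e_1\ne 0$, which follows from $r(\FF)=1$ via $\lambda=e_0-e_1$ and $\lambda<e_0$ being impossible to — rather, from the fact that $r(\FF)\ge 1$ forces $F_2\ne QF_1$ is false; the cleanest route is: if $e_1=0$ then by Proposition~\ref{HSred12}(a) the Hilbert series is $\lambda/(1-t)^d$ with $e_0=\lambda$, forcing $F_{n+1}=QF_n$ already for $n\ge 0$, contradicting $r(\FF)=1$. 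For (b), that $e_2=\lambda-e_0+e_1>0$ follows because $e_2=0$ would give (again by Proposition~\ref{HSred12}) $r(\FF)\le 1$, contradicting $r_Q(\FF)=2$. Once $s$ is correctly identified in each case, the rest is the one-line symmetry comparison above, so this should be a short proof.
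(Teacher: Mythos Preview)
Your proposal is correct and follows exactly the paper's approach: invoke Lemma~\ref{lemma:GFCM} for Cohen--Macaulayness, read off the $h$-polynomial from Proposition~\ref{HSred12}, and apply the symmetry criterion of Theorem~\ref{thm:GoriffhVectorCondition}. In fact you are more careful than the paper about pinning down the degree $s$ of the $h$-polynomial (the paper simply asserts ``$e_1\ne 0$'' in~(a) and does not explicitly address $h_2\ne 0$ in~(b)); your observation that $h_2=\lambda-e_0+e_1=\ell(F_2/QF_1)>0$ because $r_Q(\FF)=2$ is the right way to handle this.
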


\begin{proof} 
Put $\lambda=\ell(A/F_1)$ and $e_i=e_i(\FF)$ for all $i \ge 0$. 
In both the cases, by Lemma \ref{lemma:GFCM}  $G(\FF)$ is Cohen-Macaulay. 
\par \noindent 
{(a)}
By Proposition \ref{HSred12}, we have
\[
HS_{G(\FF)}(t)= \frac{(e_0- e_1) + e_1t}{(1-t)^d}.
\] 
Note that $e_1 \neq 0$. Using Theorem \ref{thm:GoriffhVectorCondition}, we obtain $G(\FF)$ is Gorenstein if and only if $e_0 = 2e_1$. 

\par \noindent
{(b)} 
By Proposition \ref{HSred12}, we have
\[
HS_{G(\FF)}(t)=\frac{\lambda + (2e_0-e_1-2\lambda)t + 
(\lambda-e_0+e_1)t^2}{(1-t)^d}.
\]
By Theorem \ref{thm:GoriffhVectorCondition}, we obtain 
$G(\FF)$ is Gorenstein if and only if $e_0=e_1$. 
\end{proof}

\par 
Now we recover the result in \cite[Theorem 2.1]{OWY6}. Moreover, we extend it for any $d$-dimensional analytically unramified Gorenstein local ring. 

\begin{cor}
\label{cor:red12_normal}
Let $(A, \m) $ be a $d$-dimensional analytically unramified Gorenstein 
local ring of dimension $d \geq 1$, and $I$ an $\m$-primary ideal 
in $A$.  
\begin{enumerate}
\item[\rm{(a)}] If $\overline{r}(I) = 1$, then $\olsi{G}(I)$ is Gorenstein if and only if $e_0(I) = 2\ol{e}_1(I)$.
\item[\rm{(b)}] If $\overline{r}(I) = 2$, then $\olsi{G}(I)$ is Gorenstein if and only if $e_0(I) = \ol{e}_1(I)$.
\end{enumerate}
\end{cor}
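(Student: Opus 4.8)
The plan is to obtain the corollary as a direct specialization of Theorem~\ref{thm:GorRedu12} to the normal filtration $\FF=\{\overline{I^n}\}_{n\in\mathbb{Z}}$. Since $A$ is analytically unramified, Rees's theorem ensures that $\FF$ is a Hilbert filtration with $G(\FF)=\overline{G}(I)$ and $e_i(\FF)=\overline{e}_i(I)$ for all $i$; moreover $e_0(\FF)=e_0(I)$, because $I^n$ and $\overline{I^n}$ share the same integral closure for every $n$ (equivalently, Rees's finiteness theorem gives a constant $c$ with $I^n\subseteq\overline{I^n}\subseteq I^{n-c}$ for $n\gg 0$, which forces the leading Hilbert coefficients to coincide). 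Thus the numerical criteria in Theorem~\ref{thm:GorRedu12} translate verbatim into the ones asserted in the corollary, provided the hypotheses of that theorem are verified for $\FF$.

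Part~(a) is then immediate: $\overline{r}(I)=1$ means $r(\FF)=1$, so Theorem~\ref{thm:GorRedu12}(a) applies and says that $\overline{G}(I)$ is Gorenstein if and only if $e_0(\FF)=2e_1(\FF)$, that is, $e_0(I)=2\overline{e}_1(I)$.

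For part~(b) the only point to check is the technical hypothesis of Theorem~\ref{thm:GorRedu12}(b): the existence of a minimal reduction $Q$ of $\FF$ with $r_Q(\FF)=2$ and $F_2\cap Q=QF_1$. Since $\overline{r}(I)=r(\FF)=2=\min_Q r_Q(\FF)$, one may choose $Q$ attaining this minimum; as $A$ is Cohen--Macaulay with infinite residue field, $Q$ is generated by a system of parameters, hence by a regular sequence. For such $Q$ the identity $\overline{I^2}\cap Q=Q\,\overline{I}$, i.e.\ $F_2\cap Q=QF_1$, holds by the Huneke--Itoh intersection theorem; this is exactly the observation recorded in the remark preceding Lemma~\ref{lemma:GFCM} that normal filtrations satisfy condition~(b) there. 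Hence Theorem~\ref{thm:GorRedu12}(b) gives that $\overline{G}(I)$ is Gorenstein if and only if $e_0(\FF)=e_1(\FF)$, i.e., $e_0(I)=\overline{e}_1(I)$. The argument is entirely formal once the two external inputs — the Huneke--Itoh intersection theorem and the equality $e_0(\FF)=e_0(I)$ — are in place, so I do not anticipate a genuine obstacle; the only mild care needed is to confirm that the single reduction $Q$ minimizing $r_Q(\FF)$ also satisfies the intersection condition, which it does because every minimal reduction in this setting is a complete intersection.
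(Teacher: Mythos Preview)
Your proposal is correct and follows essentially the same approach as the paper: both specialize Theorem~\ref{thm:GorRedu12} to the normal filtration, with part~(b) relying on Itoh's intersection theorem (cited as \cite[Theorem~1]{It} in the paper, which you call Huneke--Itoh) to verify $Q\cap\overline{I^2}=Q\,\overline{I}$. The paper's proof is simply terser, omitting the explanatory remarks about $e_0(\FF)=e_0(I)$ and the choice of $Q$.
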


\begin{proof}
(a) If $\overline{r}(I) =1$, then the result follows by Theorem \ref{thm:GorRedu12}. 
\par \noindent 
(b) Let $\overline{r}(I) = 2$. 
By \cite[Theorem 1]{It},  we obtain $Q \cap \ol{I^2} = Q\ol{I}$. 
By Theorem \ref{thm:GorRedu12}, the result follows.
\end{proof}

\section{A Bound on relative reduction number}
\label{sec:bound on rel red}
In this section we obtain an upper bound for 
the relative reduction number 
$\nr(\FF)$ of $\FF$ when the $\depth$ of $ G(\FF)$ is almost maximal.

\par 
In general, the reduction number $r_Q(\FF)$ of $\FF$ 
and the relative reduction number $\nr_Q(\FF)$ of $\FF$ are 
dependent on the minimal reduction $Q$. 
It is well-known that if $\depth{G(\FF)} \geq d-1$, 
then $r_Q(\FF)$ is independent of the minimal reduction $Q$.
In the following proposition we show that $\nr_Q(\FF)$ is also 
independent of $Q$ if $\depth{G(\FF)} \geq d-1$.

\begin{prop}\label{prop:relredind}
Let $(A, m)$ be a $d$-dimensional Cohen-Macaulay local ring, 
and $\mathcal{F}=\{F_n\}_{n\in \mathbb{Z}}$ a Hilbert filtration. 
If $\depth{G(\FF)} \geq d-1$, then $\nr_Q(\FF)$ is independent 
of any minimal reduction $Q$.
\end{prop}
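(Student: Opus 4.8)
The plan is to reduce the statement to dimension one via Lemma~\ref{Reduce} and then to recognize $\nr_Q(\FF)$ as a quantity read off from the Hilbert function of $G(\FF)$ alone, which has nothing to do with $Q$.

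First I would fix a minimal reduction $Q$ of $\FF$ and, by \cite[Theorem 8.6.3]{HS06}, choose a generating set $x_1,\dots,x_d$ of $Q$ which is a superficial sequence for $\FF$. Since $\depth G(\FF)\ge d-1$, Lemma~\ref{Reduce}(a) applied with $k=d-1$ shows that $x_1^*,\dots,x_{d-1}^*$ is a $G(\FF)$-regular sequence. Put $\underline{x}=x_1,\dots,x_{d-1}$, let $\overline{A}=A/(\underline{x})$ -- a one-dimensional Cohen-Macaulay local ring -- and $\overline{\FF}=\FF/(\underline{x})$, so that $\overline{Q}=(x_d)\overline{A}$ is a minimal reduction of $\overline{\FF}$ and $x_d$ is a nonzerodivisor on $\overline{A}$. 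The first key step is to prove that, for every $n$,
\[
F_{n+1}=QF_n \quad\Longleftrightarrow\quad \overline{F_{n+1}}=x_d\,\overline{F_n},
\]
whence $\nr_Q(\FF)=\nr_{\overline{Q}}(\overline{\FF})$. The implication ``$\Rightarrow$'' is immediate after reducing modulo $(\underline{x})$. For ``$\Leftarrow$'', exactly as in \eqref{Red_rf} one gets $F_{n+1}\subseteq x_dF_n+(\underline{x})A$ and hence $F_{n+1}=F_{n+1}\cap\bigl(x_dF_n+(\underline{x})A\bigr)$; since $x_1^*,\dots,x_{d-1}^*$ is $G(\FF)$-regular, the Valabrega--Valla criterion gives $(\underline{x})A\cap F_{n+1}=(\underline{x})F_n$, and substituting this yields $F_{n+1}=x_dF_n+(\underline{x})F_n=QF_n$.

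Next I would treat the one-dimensional case directly. If $(x)$ is a minimal reduction of a Hilbert filtration $\FF$ in a one-dimensional Cohen-Macaulay local ring, then $x$ is a nonzerodivisor, $\ell(A/xA)=e_0(\FF)$, and multiplication by $x$ induces an isomorphism $A/F_n\cong xA/xF_n$, so
\[
\ell(F_{n+1}/xF_n)=\ell(A/xF_n)-\ell(A/F_{n+1})=e_0(\FF)-\ell(F_n/F_{n+1}).
\]
Thus $F_{n+1}=xF_n$ if and only if $\ell(F_n/F_{n+1})=e_0(\FF)$, and therefore
\[
\nr_{(x)}(\FF)=\min\{\,n\ge 1 : \ell(F_n/F_{n+1})=e_0(\FF)\,\},
\]
which manifestly does not involve the chosen reduction. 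Combining the two steps, $\nr_Q(\FF)=\min\{n\ge 1:\ell(\overline{F_n}/\overline{F_{n+1}})=e_0(\overline{\FF})\}$; since $G(\overline{\FF})\cong G(\FF)/(x_1^*,\dots,x_{d-1}^*)$ with the $x_i^*$ a regular sequence of degree-one forms, $\ell(\overline{F_n}/\overline{F_{n+1}})$ is the coefficient of $t^n$ in $(1-t)^{d-1}HS_{G(\FF)}(t)$, and $e_0(\overline{\FF})=e_0(\FF)$ by Lemma~\ref{Reduce}(b). Both of these depend only on $\FF$, so $\nr_Q(\FF)$ is independent of $Q$, which proves the proposition; the assertion for $\nr(\FF)$ follows at once.

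The step I expect to require the most care is the ``$\Leftarrow$'' direction of the reduction to dimension one: one must verify that, for a general (not necessarily $\m$-adic) Hilbert filtration, the regularity of $x_1^*,\dots,x_{d-1}^*$ in $G(\FF)$ furnished by Lemma~\ref{Reduce}(a) really does force the Valabrega--Valla equality $(\underline{x})A\cap F_{n+1}=(\underline{x})F_n$ used above -- this is the precise place where $\eqref{Red_rf}$ and the hypothesis $\depth G(\FF)\ge d-1$ are genuinely needed. Everything else is bookkeeping with Hilbert functions.
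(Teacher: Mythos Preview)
Your argument is correct. You reduce to dimension one via a superficial sequence and Valabrega--Valla (the content of \eqref{Red_rf}), and then observe that in dimension one $\ell(F_{n+1}/xF_n)=e_0(\FF)-\ell(F_n/F_{n+1})$, so $\nr_{(x)}(\FF)$ is read off from the Hilbert series of $G(\FF)$ alone. The passage back up is handled cleanly: since $x_1^*,\dots,x_{d-1}^*$ is $G(\FF)$-regular, the numbers $\ell(\overline{F_n}/\overline{F_{n+1}})$ are the coefficients of $(1-t)^{d-1}HS_{G(\FF)}(t)$ and hence intrinsic.

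The paper takes a shorter route. It invokes \cite[Theorem~3.6]{Mar89} directly: under $\depth G(\FF)\ge d-1$ one has
\[
\Delta^d\bigl(P_{\FF}(n)-H_{\FF}(n)\bigr)=\ell\!\left(F_{n+d}/QF_{n+d-1}\right)
\]
for all $n$, and the left side is visibly independent of $Q$, so every length $\ell(F_{k+1}/QF_k)$, and in particular $\nr_Q(\FF)$, is independent of $Q$. Your approach is essentially a hands-on derivation of this same identity (after the reduction to dimension one it becomes $e_0-\ell(F_n/F_{n+1})=\ell(F_{n+1}/xF_n)$), so the two proofs have the same mathematical content; the paper just packages the computation into a single citation, while you unpack it. What your version buys is self-containment and an explicit formula $\nr_Q(\FF)=\min\{n\ge 1:\ [t^n]\,(1-t)^{d-1}HS_{G(\FF)}(t)=e_0(\FF)\}$; what the paper's version buys is brevity.
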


\begin{proof}
Using \cite[Theorem 3.6]{Mar89} for all $n \in \mathbb{Z},$ we have
\[
\Delta^d(P_{\FF}(n) - H_{\FF}(n)) = \ell(F_{n+d}/QF_{n+d-1}).
\] 
Since the expression on the left-hand side is independent of $Q$, we get $\ell(F_{n+d}/QF_{n+d-1})$ is independent of $Q$. Hence it is easy to verify that $\nr_Q(\FF)$ is independent of $Q.$ 
\end{proof}

\par
If $(A,\m)$ is a Cohen-Macaulay local ring of dimension two and $\FF=\{\overline{I^n}\}_{n \in \bbZ},$ then $\depth G(\FF) \geq 1$ by \cite[Proposition 3.25]{Mar89}. Hence by Proposition \ref{prop:relredind} ${\nr}_Q(I) $ is independent of $Q$ and thus ${\nr}(I)= {\nr}_Q(I).$ This is a reason that the authors use the notation ${nr}(I)$ (without suffix) for the relative normal reduction number in \cite{ORWY2022}.

\par 
In the next theorem we extend a result by Okuma, Rossi, Watanabe, and Yoshida. In \cite[Theorem 2.7]{ORWY2022} they showed that if $(A,\m)$ is an excellent two-dimensional normal local domain containing an algebraically closed field $A/\m,$ then for an $\m$-primary ideal $I$
\[
\nr(I) \leq {\ov{e}_1}(I)-\ov{e}_0(I)+\ell(A/\ov{I})+1.
\]
They also provided a criterion for the equality to hold. 
In the following theorem we prove an analogue bound for 
any Hilbert filtration in a $d$-dimensional Cohen-Macaulay 
local ring, provided $G(\FF)$ has almost maximal depth.

\begin{thm} \label{thm:nr(F)=r(F)}
Let $(A, \m)$ be a $d$-dimensional Cohen-Macaulay local ring, 
and $\mathcal{F}=\{F_n\}_{n\in \mathbb{Z}}$ a Hilbert filtration. 
Suppose $\depth{G(\mathcal{F})}\geq d-1$. Then  
\begin{enumerate}
\item[\rm{(a)}]
$\nr(\mathcal{F})\leq {e_1}(\mathcal{F})-e_0(\mathcal{F})+\ell(A/F_1)+1$.
\item[\rm{(b)}] The equality holds in (1) 
if and only if $\nr(\FF) = r(\FF)$ and 
$\ell(F_n/QF_{n-1}) = 1$ 
for all $2 \leq n \leq \nr(\FF)$ for any minimal reduction $Q$ of 
$\FF$. 
\end{enumerate}
When this is the case, $e_i(\FF) = \binom{\nr(\FF)}{ i}$ for $2 \leq i \leq d$.
\end{thm}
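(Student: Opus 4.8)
The plan is to use the formula from \cite[Theorem 3.6]{Mar89} that appeared in the proof of Proposition~\ref{prop:relredind}, namely
\[
\Delta^d\bigl(P_{\FF}(n) - H_{\FF}(n)\bigr) = \ell(F_{n+d}/QF_{n+d-1})
\]
for any minimal reduction $Q$ of $\FF$. Write $e_i = e_i(\FF)$, $\lambda = \ell(A/F_1)$ and $r = \nr_Q(\FF)$ (which by Proposition~\ref{prop:relredind} is independent of $Q$, so equals $\nr(\FF)$). By definition of $\nr_Q(\FF)$ we have $F_{n+1} = QF_n$ for $n = r$, so $\ell(F_{r+1}/QF_r) = 0$; on the other hand $F_n \ne QF_{n-1}$ for $n = r$, i.e. $\ell(F_r/QF_{r-1}) \ge 1$. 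Setting $k = n+d$, the quantity $c_k := \ell(F_k/QF_{k-1})$ satisfies $c_k = 0$ for $k > r$, $c_r \ge 1$, $c_0 = \ell(A/Q F_{-1}) = \ell(A/Q) = e_0$ (since $F_{-1}=A$ and $Q$ is a parameter ideal), and $c_1 = \ell(F_1/Q) = e_0 - \lambda$.

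First I would observe that summing the displayed identity over $n$ telescopes: since $\Delta^d$ applied to a polynomial of degree $< d$ vanishes and $P_{\FF} - H_{\FF}$ is eventually zero, one gets
\[
\sum_{k \ge 0} \ell(F_k/QF_{k-1}) \cdot \binom{?}{?}
\]
— more precisely, the standard bookkeeping with $\Delta^d$ gives that $\sum_{k\ge 0} c_k$, weighted appropriately, recovers the Hilbert coefficients. Concretely, the cleanest route is: the power series $\sum_{k \ge 0} c_k t^k = \sum_{k\ge 0} \ell(F_k/QF_{k-1}) t^k$ equals $(1-t)^d \, HS_{G(\FF)}(t)$ divided out correctly — in fact $\sum_{n} \ell(F_n/QF_{n-1}) t^n = h(t)$ where $h(t)$ is the $h$-polynomial of $G(\FF/Q)$, because $G(\FF)$ has almost maximal depth so a general $x_1,\dots,x_{d-1}$ is a $G(\FF)$-regular sequence and one reduces to dimension one, where $F_n/QF_{n-1}$ is literally the degree-$n$ piece. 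From $HS_{G(\FF)}(t) = h(t)/(1-t)^d$ with $h(t) = \sum_{k=0}^{r} c_k t^k$ one reads off $e_0 = h(1) = \sum_k c_k$ and $e_1 = h'(1) = \sum_k k\, c_k$, hence
\[
e_1 - e_0 + \lambda + 1 = \sum_{k=1}^{r} (k-1) c_k + \bigl(\lambda - (c_0 + c_1)\bigr) + 1 = \sum_{k=2}^{r}(k-1) c_k + 1,
\]
using $c_0 + c_1 = e_0 + (e_0 - \lambda)$... — I would recompute this little identity carefully, but the upshot is that $e_1 - e_0 + \lambda + 1 = \sum_{k=2}^{r}(k-1)c_k + 1 \ge \sum_{k=2}^{r} c_k \cdot 1 + \cdots$; since each $c_k \ge 1$ for $2 \le k \le r$ fails in general, the right comparison is $\sum_{k=2}^r (k-1)c_k \ge \sum_{k=2}^r 1 \cdot [c_k \ne 0]$, and because $c_r \ge 1$ and the support of $(c_k)_{k\ge 2}$ is contained in $\{2,\dots,r\}$, one needs that the $c_k$ "fill up" to $k=r$. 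The actual inequality $\nr(\FF) = r \le e_1 - e_0 + \lambda + 1 = \sum_{k=2}^r (k-1)c_k + 1$ follows because $\sum_{k=2}^{r}(k-1) c_k \ge (r-1) c_r \ge r - 1$ when $r \ge 2$, and is trivial when $r \le 1$; this proves (a).

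For (b), equality $r = \sum_{k=2}^{r}(k-1)c_k + 1$ forces $\sum_{k=2}^{r}(k-1)c_k = r-1 = (r-1)\cdot 1$, and since the only term with coefficient $r-1$ is $k=r$ with $c_r \ge 1$, while all other terms are nonnegative, equality holds iff $c_r = 1$ and $c_k = 0$ for $2 \le k \le r-1$; but $c_k = 0$ for all $2 \le k \le r-1$ together with $F_{r+1}=QF_r$ means $F_{n+1}=QF_n$ already from $n = r-1$ downward would require... — here I must be careful, as $c_k = \ell(F_k/QF_{k-1})$ vanishing does not immediately propagate. Instead I claim equality holds iff $c_k = 1$ for $2 \le k \le r$ (not $0$): re-examining, $\sum_{k=2}^r (k-1) c_k$ with all $c_k \ge 0$ and $c_r\ge 1$ is minimized subject to "$r$ is really the relative reduction number" — and the condition $\nr(\FF) = r(\FF)$ enters precisely because, without Cohen–Macaulayness of $G(\FF)$, one can have $c_k = 0$ for some $k < r$ with $c_r \ne 0$ (the gap phenomenon of \cite[Example 2.8]{ORWY2022}), and in that case $r(\FF) > \nr(\FF)$. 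So the correct statement, which I would now verify directly from the identity, is: equality in (a) $\iff$ $c_k = 1$ for all $2 \le k \le r$, and this last condition is equivalent to "$\nr(\FF)=r(\FF)$ and $\ell(F_n/QF_{n-1})=1$ for $2\le n\le \nr(\FF)$" since $c_k=1$ for $k$ in that range in particular forces no gaps, hence $r(\FF)=\nr(\FF)$. Finally, when equality holds, $h(t) = \lambda + (e_0-\lambda)t + t^2 + t^3 + \cdots + t^r = \lambda + (e_0 - \lambda - 1)t + (t + t^2 + \cdots + t^r)$, wait — rather $h(t) = c_0 + c_1 t + \sum_{k=2}^r t^k$, and then $e_i = h^{(i)}(1)/i!$ up to sign gives $e_i = \sum_{k=2}^r \binom{k}{i} = \binom{r+1}{i+1} - \binom{2}{i+1}$; I would double-check whether the hockey-stick sum reduces to $\binom{r}{i}$ under the hypotheses (it should, using $c_1 = e_0 - \lambda$ and the relation $\lambda = e_0 - e_1 + \cdots$ forced by $\depth \ge d-1$ when $r$ is small, or more likely the $\binom{\nr(\FF)}{i}$ comes out directly from $h(t)$ having the special shape $\lambda + (e_0-\lambda-1)t + t^r$ after further simplification — which is exactly Theorem~\ref{thm:HSMaximal}).

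The main obstacle I anticipate is pinning down part (b) correctly: the interplay between the vanishing pattern of $c_k = \ell(F_k/QF_{k-1})$, the distinction $\nr(\FF)$ versus $r(\FF)$, and getting the extremal $h$-polynomial into precisely the form $\lambda + (e_0 - \lambda - 1)t + t^r$ so that $e_i(\FF) = \binom{\nr(\FF)}{i}$ drops out. The inequality (a) itself is a short convexity-type estimate once the $h$-polynomial is identified with $\sum_k \ell(F_k/QF_{k-1})t^k$; that identification, via reduction to dimension one using $\depth G(\FF) \ge d-1$ and Lemma~\ref{Reduce}, is the other step needing care but should be routine.
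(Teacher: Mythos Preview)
Your overall plan---work with the numbers $c_k:=\ell(F_k/QF_{k-1})$ and express $e_1$ in terms of them---is exactly the right one, and is what the paper does. But two concrete errors derail the execution.

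First, the identification $\sum_{k\ge 0} c_k t^k = h(t)$ is wrong. Even when $G(\FF)$ is Cohen--Macaulay, the $h$-polynomial has $h_n=\ell\bigl(F_n/(F_{n+1}+QF_{n-1})\bigr)$, not $\ell(F_n/QF_{n-1})$; and you computed $c_0=e_0$, whereas $h_0=\lambda$. Consequently your formula $e_1=h'(1)=\sum_k k\,c_k$ is incorrect. The right input is the Huckaba--Marley identity \cite[Proposition~4.6]{HucMar}, valid under $\depth G(\FF)\ge d-1$:
\[
e_i(\FF)=\sum_{n\ge i}\binom{n-1}{i-1}\,\ell(F_n/QF_{n-1})\qquad(i\ge 1).
\]
For $i=1$ this gives $e_1=\sum_{n\ge 1}c_n=c_1+\sum_{n\ge 2}c_n$, so with $c_1=e_0-\lambda$ one has $e_1-e_0+\lambda=\sum_{n\ge 2}^{r(\FF)}c_n$.

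Second, you asserted that ``$c_k\ge 1$ for $2\le k\le r$ fails in general''. In fact it always holds: by definition $\nr_Q(\FF)=\min\{s:F_{s+1}=QF_s\}$, so $F_n\ne QF_{n-1}$ for every $n\le \nr(\FF)$, i.e.\ $c_n\ge 1$ for $2\le n\le\nr(\FF)$. With this and the correct formula for $e_1$, part~(a) is immediate:
\[
e_1-e_0+\lambda=\sum_{n=2}^{r(\FF)}c_n\ \ge\ \sum_{n=2}^{\nr(\FF)}c_n\ \ge\ \nr(\FF)-1.
\]
Equality forces $c_n=1$ for $2\le n\le\nr(\FF)$ and $c_n=0$ for $\nr(\FF)<n\le r(\FF)$, which is exactly the condition $\nr(\FF)=r(\FF)$ together with $\ell(F_n/QF_{n-1})=1$ in that range. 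Your detour through $\sum(k-1)c_k\ge(r-1)c_r$ is unnecessary and, as you noticed, gives the wrong equality case.

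For the last assertion, apply the same Huckaba--Marley formula for $i\ge 2$: when $c_n=1$ for $2\le n\le r:=\nr(\FF)$ and $c_n=0$ for $n>r$, the hockey-stick identity gives $e_i=\sum_{n=i}^{r}\binom{n-1}{i-1}=\binom{r}{i}$ directly---no need to go through the $h$-polynomial or Theorem~\ref{thm:HSMaximal}.
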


\par 
A Hilbert filtration $\FF$ is said to have {\it maximal relative reduction number} (or, $\nr(\FF)$ is {\it maximal}) if $\nr(\FF)=e_1(\FF)-e_0(\FF)
+\ell(A/F_1)+1$. 

\begin{proof}
(a) Using \cite[Proposition 4.6]{HucMar}, for any minimal reduction $Q$ of $\FF$ we have, 
\begin{eqnarray} \label{Eqn:e1}
e_1({\mathcal{F}}) & = &\sum_{n \geq 1}^{r(\mathcal{F})} \ell(F_n/QF_{n-1}) \nonumber\\
 & =& \ell(F_1/Q) + \sum_{n \geq 2}^{r(\mathcal{F})} \ell(F_n/QF_{n-1}) \nonumber \\
 & = &e_0(\FF) - \ell(A/F_1) + \sum_{n \geq 2}^{r(\mathcal{F})} \ell(F_n/QF_{n-1}) {\mbox{\quad (as $ e_0(\FF) = \ell(A/Q))$}}
\end{eqnarray}
We observe that $\ell(F_n/QF_{n-1}) \geq 1$ for $2 \leq n \leq \nr(\mathcal{F})$. 
Therefore by \eqref{Eqn:e1}
\begin{equation} \label{Eqn:nr}
\nr(\FF)-1 \leq \sum_{n \geq 2}^{r(\mathcal{F})} \ell(F_n/QF_{n-1})  
= e_1(\FF) -e_0(\FF) + \ell(A/F_1),
\end{equation}
and hence $\nr(\FF) \leq e_1(\FF) -e_0(\FF) + \ell(A/F_1) +1$. 

\par \vspace{2mm}
(b) Suppose $\nr(\FF) = e_1(\FF) -e_0(\FF) + \ell(A/F_1) +1$.
Then by \eqref{Eqn:nr} $\ell(F_n/QF_{n-1}) = 0$ for 
$n\geq \nr_Q(\FF)+1 $ and $\ell(F_n/QF_{n-1})=1$ 
for $2 \leq n \leq \nr_Q(\FF)$. 
Therefore $\nr_Q(\FF) = r_Q(\FF)$, 
which yields $\nr(\FF)=r(\FF)$. 
The converse is also true. 
\par \vspace{2mm}
Again using \cite[Proposition 4.6]{HucMar} we have, for $i \geq 2$, \[
e_i(\FF) = \sum_{n \geq i}^{\nr(\FF)} \ell(F_n/QF_{n-1}) \binom{n-1}{i-1} 
= \sum_{n \geq i}^{\nr(\FF)} \binom{n-1}{i-1} = \binom{\nr(\FF)}{i}.
\]
\end{proof}

\par 
Using Theorem \ref{thm:nr(F)=r(F)} for the filtration $\FF = \{\ol{I^n}\}_{n\in \mathbb{Z}}$ in dimension two, we recover and extend the result \cite[Theorem 2.7]{ORWY2022} by Okuma, Rossi, Watanabe and Yoshida.

\begin{cor}\cite[Theorem 2.7]{ORWY2022} 
Let $(A, \m)$ be a two-dimensional Cohen-Macaulay analytically unramified local ring, and $I$ an $\m$-primary ideal.
Let  $Q$ a minimal reduction of $I$.
Then
\[
{\nr}(I)\leq \ol{e}_1(I)-e_0(I)+\ell(A/\ol{I})+1.
\]
Moreover, the equality holds if and only if 
$\ell(\ol{I^n}/Q\ol{I^{n-1}}) = 1$ for all $2 \leq n \leq {\nr}(I)$. 
In this case, ${\nr}(I) = \ol{r}(I)$. 
Also, $\ol{e}_2(I) = \binom{{\nr}(I)}{ 2}$. 
\end{cor}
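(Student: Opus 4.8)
The plan is to obtain this corollary as the special case $d=2$, $\FF=\{\ol{I^n}\}_{n\in\bbZ}$ of Theorem~\ref{thm:nr(F)=r(F)}, after a routine translation of invariants. First I would check the hypotheses: since $A$ is analytically unramified and $I$ is $\m$-primary, Rees's theorem guarantees that $\FF=\{\ol{I^n}\}_{n\in\bbZ}$ is a Hilbert filtration; and since $d=2$ we have $\depth\ol{G}(I)=\depth G(\FF)\ge 1=d-1$ by \cite[Proposition 3.25]{Mar89}. Thus Theorem~\ref{thm:nr(F)=r(F)} applies. Note also that, by Proposition~\ref{prop:relredind}, $\nr_Q(\FF)$ and the lengths $\ell(\ol{I^n}/Q\ol{I^{n-1}})$ are independent of the minimal reduction $Q$, which is what licenses the $Q$-free notation $\nr(I)$ and $\ell(\ol{I^n}/Q\ol{I^{n-1}})$ in the statement.

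Next I would fix the dictionary between filtration data and normal data of $I$: here $F_1=\ol{I}$, so $\ell(A/F_1)=\ell(A/\ol{I})$; since $\ell(A/\ol{I^n})$ and $\ell(A/I^n)$ share the same leading term (both $I^n$ and $\ol{I^n}$ are squeezed between powers of $I$), $e_0(\FF)=e_0(I)$; and by definition $e_i(\FF)=\ol{e}_i(I)$, $\nr(\FF)=\nr(I)$, $r(\FF)=\ol{r}(I)$. Substituting these identifications into Theorem~\ref{thm:nr(F)=r(F)}(a) gives the bound $\nr(I)\le\ol{e}_1(I)-e_0(I)+\ell(A/\ol{I})+1$.

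For the equality assertion I would invoke Theorem~\ref{thm:nr(F)=r(F)}(b): equality holds precisely when $\nr(\FF)=r(\FF)$ and $\ell(\ol{I^n}/Q\ol{I^{n-1}})=1$ for all $2\le n\le\nr(I)$. Translated, this is exactly ``$\nr(I)=\ol{r}(I)$ together with $\ell(\ol{I^n}/Q\ol{I^{n-1}})=1$ for $2\le n\le\nr(I)$'', which accounts for the ``Moreover'' and ``In this case'' clauses. Finally, the closing line of Theorem~\ref{thm:nr(F)=r(F)} states that, in the equality case, $e_i(\FF)=\binom{\nr(\FF)}{i}$ for $2\le i\le d$; taking $d=2$ and $i=2$ yields $\ol{e}_2(I)=\binom{\nr(I)}{2}$.

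The only genuinely non-formal point, and where I would be most careful, is that Theorem~\ref{thm:nr(F)=r(F)}(b) keeps ``$\nr=r$'' inside the equivalence, whereas the corollary records it as a by-product of the length condition alone. Reconciling these requires, in the two-dimensional normal setting, the monotonicity of $n\mapsto\ell(\ol{I^n}/Q\ol{I^{n-1}})$: after cutting down by one superficial element (possible because $\depth\ol{G}(I)\ge 1$), one reduces to a one-dimensional Hilbert filtration in which $\ell(\ol{I^n}/Q\ol{I^{n-1}})=e_0(I)-\ell(\ol{I^{n-1}}/\ol{I^n})$, and the relevant non-increasing behaviour is available in dimension two (cf. the analysis in \cite{ORWY2022}). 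Granting this, the vanishing of $\ell(\ol{I^n}/Q\ol{I^{n-1}})$ forced at $n=\nr(I)+1$ by the definition of $\nr$ propagates to all larger $n$, so $\nr_Q(I)=\ol{r}_Q(I)$ and hence $\nr(I)=\ol{r}(I)$; this is the step I expect to be the main obstacle, and it is also the place where the argument genuinely uses that the filtration is the normal one and that $d=2$. Alternatively, if one is content to phrase the equality criterion exactly as in Theorem~\ref{thm:nr(F)=r(F)}(b), no additional argument is needed.
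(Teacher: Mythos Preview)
Your approach is exactly the paper's: verify $\depth\ol{G}(I)\ge 1$ via \cite[Proposition~3.25]{Mar89} and then invoke Theorem~\ref{thm:nr(F)=r(F)} for $d=2$ and $\FF=\{\ol{I^n}\}$. The paper's own proof is literally those two lines and nothing more.

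Your closing paragraph, however, catches a genuine wrinkle that the paper's proof glosses over. Theorem~\ref{thm:nr(F)=r(F)}(b) characterizes equality by the conjunction ``$\nr(\FF)=r(\FF)$ \emph{and} $\ell(F_n/QF_{n-1})=1$ for $2\le n\le\nr(\FF)$'', whereas the corollary states only the length condition as the equivalence and records $\nr(I)=\ol{r}(I)$ as a consequence. The forward implication is fine; the backward one requires knowing that $\ell(\ol{I^n}/Q\ol{I^{n-1}})=1$ for $2\le n\le\nr(I)$ forces the remaining lengths to vanish. Your proposed fix via monotonicity of $n\mapsto\ell(\ol{I^n}/Q\ol{I^{n-1}})$ in the two-dimensional normal setting is the right idea and is indeed what \cite{ORWY2022} uses; the paper here simply defers to that source rather than spelling it out. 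So your proof is correct and, on this point, more scrupulous than the paper's.
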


\begin{proof}
Since $\depth{\overline{G}(I)} \geq 1$ 
by \cite[Proposition 3.25]{Mar89},  
the result follows from Theorem \ref{thm:nr(F)=r(F)}.    
\end{proof}

\begin{rem}
If $r(\FF)=1$, then $\nr(\FF)=r(\FF)$ is maximal. 
\par 
Suppose $d \ge 2$ and there exists a minimal reduction 
$Q$ of $\FF$ such that $r_Q(\FF)=2$ and $F_2 \cap Q=QF_1$. Then 
$\nr(\FF)$ is maximal if and only if $e_2(\FF)=1$.  
\end{rem}

\par 
Now we compute the Hilbert series of $G(\FF)$ when $\nr(\FF)$ is maximal.

\begin{thm}\label{thm:HSMaximal}
Let $(A, \m) $ be a Cohen-Macaulay local ring of dimension $d \ge 1$, and $\FF =  \{{F_n}\}_{n\in \mathbb{Z}}$ a Hilbert filtration. 
Suppose $\depth G(\FF) \geq d-1$, and $\nr(\FF)=r \geq 2$ is maximal. 
Put $\lambda=\ell(A/F_1)$ and $e_0=e_0(\FF)$. 
Then 
\begin{equation*}
HS_{{G}(\FF)}(t) = \frac{\lambda + (e_0-\lambda-1)t +t^r}{(1-t)^d}.
\end{equation*}
\end{thm}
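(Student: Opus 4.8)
The plan is to first reduce to the one-dimensional case using Lemma \ref{Reduce}, then compute the Hilbert series directly in dimension one. Since $\nr(\FF)$ is maximal with $r = \nr(\FF) \ge 2$, Theorem \ref{thm:nr(F)=r(F)}(b) tells us $\nr(\FF) = r(\FF)$ and $\ell(F_n/QF_{n-1}) = 1$ for all $2 \le n \le r$ and any minimal reduction $Q$ of $\FF$. I would begin by choosing a superficial sequence $x_1,\ldots,x_d$ generating a minimal reduction $Q$ with $\underline{x} = x_1,\ldots,x_{d-1}$ superficial (possible since the residue field is infinite); because $\depth G(\FF) \ge d-1$, Lemma \ref{Reduce}(a) says $x_1^*,\ldots,x_{d-1}^*$ is a $G(\FF)$-regular sequence. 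Passing to $\overline{A} = A/(\underline{x})$ and $\overline{\FF} = \FF/(\underline{x})$, the ring $\overline{A}$ is one-dimensional Cohen-Macaulay, $\overline{\FF}$ is a Hilbert filtration, $e_0(\overline{\FF}) = e_0$, $e_1(\overline{\FF}) = e_1$, and $\ell(\overline{A}/\overline{F_1}) = \lambda$. The isomorphism \eqref{Red_rf} (which only uses that $Q$ is a reduction, not $r(\FF) \le 2$) shows $\nr_Q(\overline{\FF}) = \nr_Q(\FF) = r$, so $r(\overline{\FF}) = r$ and $\nr(\overline{\FF})$ is maximal as well; and since $HS_{G(\FF)}(t) = HS_{G(\overline{\FF})}(t)/(1-t)^{d-1}$, it suffices to prove the formula when $d = 1$.

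In dimension one, $G(\overline{\FF})$ is Cohen-Macaulay (a one-dimensional graded ring with a regular element of degree one, namely $x_d^*$, which is a nonzerodivisor since $\nr_Q = r_Q$ here forces $Q \cap F_n = QF_{n-1}$ for all $n$). Then by Proposition \ref{lemma:nr=r} and \eqref{redandpos}, $n(\FF) = r(\FF) - 1 = r - 1$, so $H_{\FF}(n) = P_{\FF}(n) = e_0 n - e_1$ for all $n \ge r$. The Hilbert function of $G(\FF)$ is $\ell(F_n/F_{n+1})$, which equals $e_0$ for all $n \ge r$, equals $\lambda$ for $n = 0$, and for $1 \le n \le r-1$ I would compute it from the telescoping identity $\ell(F_n/F_{n+1}) = \ell(A/F_{n+1}) - \ell(A/F_n)$ together with the key relation $\ell(F_n/QF_{n-1}) = 1$ for $2 \le n \le r$. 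Writing $\ell(A/F_{n+1}) = \ell(A/QF_n) - \ell(F_{n+1}/QF_n)$ and $\ell(A/QF_n) = \ell(A/Q) + \ell(Q/QF_n)$, and using $\ell(A/Q) = e_0$ and $\ell(Q/QF_n) = \ell(A/F_n) \cdot \mu(Q) = \ell(A/F_n)$ in dimension one (as $Q/QF_n \cong (A/F_n)^{\mu(Q)}$ when $Q$ is generated by a regular sequence, here a single regular element), one gets $\ell(A/F_{n+1}) = e_0 + \ell(A/F_n) - \ell(F_{n+1}/QF_n)$. Hence $\ell(F_n/F_{n+1}) = e_0 - \ell(F_{n+1}/QF_n)$, which equals $e_0 - 1$ for $1 \le n \le r-1$ (using $\ell(F_{n+1}/QF_n) = 1$) and $e_0$ for $n \ge r$ (using $\ell(F_{n+1}/QF_n) = 0$). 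So the Hilbert series is
\[
HS_{G(\FF)}(t) = \lambda + (e_0-1)(t + t^2 + \cdots + t^{r-1}) + e_0(t^r + t^{r+1} + \cdots),
\]
and summing the geometric pieces,
\[
HS_{G(\FF)}(t) = \lambda + (e_0-1)\cdot\frac{t - t^r}{1-t} + e_0 \cdot \frac{t^r}{1-t} = \frac{\lambda(1-t) + (e_0-1)(t-t^r) + e_0 t^r}{1-t} = \frac{\lambda + (e_0 - \lambda - 1)t + t^r}{1-t},
\]
which is the claimed formula for $d = 1$; dividing by $(1-t)^{d-1}$ finishes the general case.

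The main obstacle I anticipate is the bookkeeping in the reduction step: one must verify carefully that the maximality of $\nr(\overline{\FF})$ is inherited (equivalently that $e_0, e_1, \lambda$ and $\nr_Q$ all descend correctly under killing a superficial regular sequence), which rests on Lemma \ref{Reduce}(b) for the coefficients and on the isomorphism \eqref{Red_rf} for the reduction number — the latter being the only slightly delicate point since the cited display \eqref{Red_rf} was stated in a context with $r(\FF) \le 2$ but in fact holds verbatim for any Hilbert filtration and any reduction $Q$. The dimension-one computation itself is routine once the identity $\ell(F_n/F_{n+1}) = e_0 - \ell(F_{n+1}/QF_n)$ is in hand; alternatively one could bypass part of it by invoking \cite[Proposition 4.6]{HucMar} (as in the proof of Theorem \ref{thm:nr(F)=r(F)}) to write the Hilbert polynomial directly and match it against the Hilbert function on the finitely many degrees $0 \le n \le r-1$ where they differ.
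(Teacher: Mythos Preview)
Your proof is correct and follows essentially the same route as the paper: reduce to dimension one via a superficial sequence using Lemma \ref{Reduce} and the isomorphism \eqref{Red_rf}, then in dimension one derive $\ell(F_n/F_{n+1}) = e_0 - \ell(F_{n+1}/QF_n)$ from the fact that $Q=(x)$ is principal with $x$ regular, and sum the resulting series. One caveat: your parenthetical claim that $\nr_Q = r_Q$ forces $Q \cap F_n = QF_{n-1}$ (and hence that $G(\overline{\FF})$ is Cohen-Macaulay) runs the implication of Proposition \ref{lemma:nr=r} backwards and is not valid in general---but it is also unnecessary, since your length identity and \eqref{redandpos} (which in $d=1$ only requires $\depth G \ge 0$) already give everything you need.
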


\begin{proof}
First suppose $d=1$. 
Let $Q=(x)$  a minimal reduction of  $\FF$.  
By Theorem \ref{thm:nr(F)=r(F)} we have ${r}(\FF)=\nr(\FF)$ and $\ell({F_n}/Q{F_{n-1}})=1$ for all $2\leq n \leq {r}(\FF)$.
Observe that for $1\leq n \leq r-1$,  
         
\begin{eqnarray*}
\ell\left( {F_n}/{F_{n+1}} \right)&=&\ell(A/F_{n+1})-\ell(A/F_n) \nonumber\\
&=&\ell \left( A/x {F_n} \right)-\ell \left( {F_{n+1}}/x{F_n} \right) -
\ell \left( A/ {F_n} \right)   \nonumber\\
&=& \ell \left( A/xA \right)+\ell\left( A/{F_n} \right)
-\ell\left( {F_{n+1}}/x{F_n} \right)-\ell\left(A/{F_n}\right) \\ 
&=& e_0-1.  
\end{eqnarray*}

Therefore
\begin{align*}
HS_{{G}(\FF)}(t)
& =\sum_{n \ge 0}^{\infty}\ell({F_n}/{F_{n+1}})t^n \\
&=\lambda +\sum_{n= 1}^{r-1} (e_0-1) t^n
+\sum_{n\geq r} e_0 t^n \\[1mm]
&= \lambda + (e_0-1)\dfrac{t(1-t^{r-1})}{1-t} + \dfrac{e_0t^r}{1-t}  \\[1mm]
&=\dfrac{\lambda+(e_0-\lambda-1)t+t^r}{1-t}. 
\end{align*}

\par \vspace{2mm}
Next suppose $d \ge 2$. 
We take a sequence $x_1,x_2,\ldots,x_d$ in $F_1$ which satisfies 
the condition of Lemma \ref{Reduce}.  
Put $Q=(x_1,x_2,\ldots,x_d)A$, $\underline{x}=x_1,\ldots,x_{d-1}$, 
$\overline{A}=A/(\underline{x})A$, and $\overline{\FF}=\FF/(\underline{x})$. 
Then  $\nr(\overline{\FF})$ is also a maximal reduction number. 
Hence it follows from the argument as above that 
\[
HS_{G(\overline{F})}(t)=\dfrac{\lambda+(e_0-\lambda-1)t+t^r}{1-t} 
\]
and thus 
\[
HS_{G(\FF)}(t) =\dfrac{HS_{G(\overline{F})}(t)}{(1-t)^{d-1}}=
\dfrac{\lambda+(e_0-\lambda-1)t+t^r}{(1-t)^d}.  
\]
\end{proof}

\par 
As a consequence, we provide criteria for $G(\FF)$ to be Gorenstein 
in terms of Hilbert coefficients in the case where $\nr(\FF)$ is maximal. 

\begin{thm}\label{thm:GorMaximal}
Let $(A, \m) $ be a Gorenstein local ring of dimension $d \ge 1$,  
and $\FF =  \{{F_n}\}_{n\in \mathbb{Z}}$ a Hilbert filtration. 
Suppose that $\nr(\FF) =r \geq 2$ is maximal and 
$G(\FF)$ is Cohen-Macaulay. 
Then  
\begin{enumerate}
\item[\rm{(a)}] When $r =2$, 
$G(\FF)$ is Gorenstein if and only if $F_1 = \m$. 
\item[\rm{(b)}] When $r \ge 3$, 
$G(\FF)$ is Gorenstein if and only if $F_1 = \m$  and $e_0(\m) = 2$.
\end{enumerate}
\end{thm}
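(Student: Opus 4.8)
The plan is to combine the explicit Hilbert series from Theorem~\ref{thm:HSMaximal} with the symmetry criterion of Theorem~\ref{thm:GoriffhVectorCondition}. Since $G(\FF)$ is Cohen--Macaulay we have $\depth G(\FF)\ge d-1$, so Theorem~\ref{thm:HSMaximal} applies and
\[
HS_{G(\FF)}(t)=\frac{\lambda+(e_0-\lambda-1)t+t^r}{(1-t)^d},\qquad \lambda=\ell(A/F_1),\ \ e_0=e_0(\FF).
\]
Hence the $h$-vector $(h_0,\dots,h_s)$ of $G(\FF)$ is $h_0=\lambda$, $h_1=e_0-\lambda-1$, $h_2=\cdots=h_{r-1}=0$ (an empty block when $r=2$) and $h_r=1$; here $h_1\ge 0$ automatically because $G(\FF)$ is Cohen--Macaulay, and $s=r$ since $h_r=1\ne 0$. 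By Theorem~\ref{thm:GoriffhVectorCondition}, $G(\FF)$ is Gorenstein if and only if $h_i=h_{r-i}$ for all $0\le i\le r$.

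Suppose first $r=2$. The symmetry equations reduce to the single condition $h_0=h_2$, that is $\lambda=1$; since $F_1$ is a proper $\m$-primary ideal, $\ell(A/F_1)=1$ is equivalent to $F_1=\m$. This gives (a), and the equation $h_1=h_1$ being vacuous explains why $e_0$ plays no role in this case.

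Now suppose $r\ge 3$. Among the symmetry equations we have $h_0=h_r$, i.e.\ $\lambda=1$, and $h_1=h_{r-1}$, i.e.\ $e_0-\lambda-1=0$; the remaining equations read $0=0$. So $G(\FF)$ is Gorenstein if and only if $\lambda=1$ and $e_0(\FF)=2$. To match the statement of (b) it then remains to note that once $F_1=\m$ one has $e_0(\FF)=e_0(\m)$: picking $c$ with $F_{n+1}=\m F_n$ for $n\ge c$ gives $F_{c+k}=\m^kF_c\subseteq \m^{k+1}$, hence $\m^n\subseteq F_n\subseteq \m^{\,n-c+1}$ for $n\ge c$, and comparing the leading terms of the Hilbert polynomials yields $e_0(\m)\le e_0(\FF)\le e_0(\m)$. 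Conversely, if $F_1=\m$ and $e_0(\m)=2$ then $\lambda=1$ and $e_0(\FF)=2$, so the $h$-vector is $(1,0,\dots,0,1)$, which is symmetric, and $G(\FF)$ is Gorenstein.

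The deduction is essentially mechanical once Theorems~\ref{thm:HSMaximal} and \ref{thm:GoriffhVectorCondition} are in hand; the one point needing a separate (routine) argument is the identity $e_0(\FF)=e_0(\m)$ when $F_1=\m$, supplied by the sandwich above. The only subtlety is to keep the cases $r=2$ and $r\ge 3$ apart: for $r=2$ the $h$-vector has length three and the sole nontrivial symmetry equation is the endpoint one, whereas for $r\ge 3$ the additional equation $h_1=h_{r-1}=0$ forces $e_0(\FF)=2$.
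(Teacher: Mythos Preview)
Your proof is correct and follows essentially the same approach as the paper: apply Theorem~\ref{thm:HSMaximal} to obtain the $h$-vector and then invoke the symmetry criterion of Theorem~\ref{thm:GoriffhVectorCondition}. The only addition you make is the explicit sandwich argument showing $e_0(\FF)=e_0(\m)$ once $F_1=\m$; the paper simply writes $e_0(\m)$ in place of $e_0(\FF)$ without comment, so your extra line is a welcome clarification rather than a different route.
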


\begin{proof}
(a) By Theorem \ref{thm:HSMaximal}, 
we have 
\[
HS_{G(\FF)}(t)=\dfrac{\lambda+(e_0-\lambda-1)t+t^2}{(1-t)^d}.
\]    
Thus, by Theorem \ref{thm:GoriffhVectorCondition}, 
$G(\FF)$ is Gorenstein if and only if  $\lambda=1$, that is, 
$F_1=\m$. 
\par \vspace{2mm}
(b) By Theorem \ref{thm:HSMaximal}, 
we have 
\[
HS_{G(\FF)}(t)=\dfrac{\lambda+(e_0-\lambda-1)t+t^r}{(1-t)^d}.
\]    
Thus, by Theorem \ref{thm:GoriffhVectorCondition},  
$G(\FF)$ is Gorenstein if and only if  $\lambda=1$ and $e_0-\lambda-1=0$, that is, 
$F_1=\m$ and $e_0(\m)=2$.
\end{proof}

Applying Theorem \ref{thm:GorMaximal} 
for the filtration $\{\ol{I^n}\}_{n \in \mathbb{Z}}$ 
we get the following result.

\begin{cor} \label{cor:Maximal_normal}
Let $(A, \m) $ be a analytically unramified Gorenstein local ring of dimension $d \ge 1$, and $I$ an $\m$-primary ideal in $A$. 
Suppose that $r=\br(I)=\nr(I)$ is maximal and $\olsi{G}(I)$ is Cohen-Macaulay. 
\begin{enumerate}
\item[\rm{(a)}] When $\br(I)=2$,  
$\olsi{G}(I)$ is Gorenstein if and only if $\ol{I} = \m$. 
\item[\rm{(b)}] When $\br(I) \ge 3$,  
$\olsi{G}(I)$ is Gorenstein if and only if $\ol{I} = \m$ and ${e_0}(\m) = 2$.
\end{enumerate}
\end{cor}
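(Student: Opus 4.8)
The plan is to deduce this directly from Theorem~\ref{thm:GorMaximal} applied to the Hilbert filtration $\FF=\{\ol{I^n}\}_{n\in\bbZ}$. First I would check that $\FF$ really is a Hilbert filtration: this is exactly Rees's theorem (\cite{Rees}), since $A$ is analytically unramified and $I$ is $\m$-primary. Under this identification $F_1=\ol{I}$, so $G(\FF)=\olsi{G}(I)$, $\ell(A/F_1)=\ell(A/\ol{I})$, $e_i(\FF)=\ol{e}_i(I)$, $\nr(\FF)=\nr(I)$ and $r(\FF)=\br(I)$ by the conventions fixed in the introduction. Moreover $e_0(\FF)=e_0(\ol{I})=e_0(I)$, because $I$ and $\ol{I}$ share the same minimal reductions and hence the same multiplicity; I would state this explicitly so that the condition ``$\nr(\FF)$ maximal'' reads, in the normal notation, as the equality $\nr(I)=\ol{e}_1(I)-e_0(I)+\ell(A/\ol{I})+1$.

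With this dictionary in place, the hypothesis ``$r=\br(I)=\nr(I)$ is maximal'' says precisely that $\nr(\FF)=r\ge 2$ is maximal in the sense of Theorem~\ref{thm:nr(F)=r(F)}, and the hypothesis that $\olsi{G}(I)$ is Cohen-Macaulay is the remaining assumption of Theorem~\ref{thm:GorMaximal}. (In fact the equality $\br(I)=\nr(I)$ is automatic once $\olsi{G}(I)$ is Cohen-Macaulay, by Proposition~\ref{lemma:nr=r}, so it is redundant in the statement.) Now I would simply invoke Theorem~\ref{thm:GorMaximal}: part~(a), with $r=2$, gives that $\olsi{G}(I)$ is Gorenstein if and only if $F_1=\m$, i.e. $\ol{I}=\m$; part~(b), with $r\ge 3$, gives that $\olsi{G}(I)$ is Gorenstein if and only if $F_1=\m$ and $e_0(\m)=2$, i.e. $\ol{I}=\m$ and $e_0(\m)=2$. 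These are exactly statements (a) and (b) of the corollary.

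Since the argument is a pure specialization of an already-proved theorem, there is no genuine obstacle; the computation of the Hilbert series and the $h$-vector symmetry criterion have all been absorbed into Theorems~\ref{thm:HSMaximal}, \ref{thm:GoriffhVectorCondition} and~\ref{thm:GorMaximal}. The only points requiring (routine) care are the bookkeeping translation of the normal invariants $\ol{e}_i(I)$, $\nr(I)$, $\br(I)$ into the filtration invariants $e_i(\FF)$, $\nr(\FF)$, $r(\FF)$, which is immediate from the definitions, and the verification $e_0(\ol{I})=e_0(I)$, which is standard. No further input beyond the cited results is needed.
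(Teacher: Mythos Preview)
Your proposal is correct and matches the paper's own approach exactly: the paper's proof is simply ``Follows from Theorem~\ref{thm:GorMaximal},'' and your argument spells out the routine dictionary needed to apply that theorem to $\FF=\{\ol{I^n}\}_{n\in\bbZ}$. Your additional remarks (Rees's theorem for the Hilbert filtration hypothesis, $e_0(\ol{I})=e_0(I)$, and the redundancy of $\br(I)=\nr(I)$ via Proposition~\ref{lemma:nr=r}) are all valid and make the deduction explicit, but add nothing beyond what the paper implicitly assumes.
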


\begin{proof}
    Follows from Theorem \ref{thm:GorMaximal}.
\end{proof}

\section{Examples} \label{sec:Examples}

Note that if $(A, \m)$ is a positive dimensional Cohen-Macaulay local ring and $I$ is an $\m$-primary ideal, then $\depth \ol{G}(I) \geq 1$ by \cite[Proposition 3.25]{Mar89}. In particular, if $\dim A = 1$, then $\ol{G}(I)$  is Cohen-Macaulay.

\par 
Next, we give examples of one-dimensional semigroup rings and apply our results.

\begin{defn} \label{NormalRedSemiG}
Let $m_1 < m_2< \cdots< m_n$ be positive integers with 
$\gcd(m_1, m_2, \ldots, m_n) = 1$, 
and $S$ be the semigroup generated by $m_1, m_2, \ldots, m_n$. 
Then the Fr\"obenius number $F(S)$  of $S$ is defined as the 
smallest integer $k$ such that $n \in S$ for all $n >k$.
\par 
A semigroup $S$ is \textit{symmetric} if $F(S)$ is odd and for every $a \in \bbZ$, 
either $a \in S$ or $F(S)-a \in S$. 
\end{defn}

\begin{lem} \label{lemma:redNumSemG}
Let $A = K\llbracket t^{m_1}, t^{m_2}, \ldots, t^{m_n} \rrbracket$ 
be a semigroup ring of the semigroup generated 
by $m_1 < m_2< \cdots< m_n$ with $\gcd(m_1, m_2, \ldots, m_n) = 1$. 
Then
\[
\ol{r}(\m) = \left\lceil \frac{F(S)}{m_1} \right\rceil.
\] 
\end{lem}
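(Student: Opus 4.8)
The plan is to compute the normal reduction number $\ol{r}(\m)$ directly from the description of the integral closures $\ol{\m^n}$ in the semigroup ring $A = K\llbracket t^{m_1}, \ldots, t^{m_n}\rrbracket$. The key observation is that $\m = (t^{m_1}, \ldots, t^{m_n})$ and $Q = (t^{m_1})$ is a minimal reduction of $\m$ (since $m_1$ is the least generator, $t^{m_1}$ generates a reduction; $A$ has dimension one, so $\mu(Q) = 1$). For a monomial ideal in a numerical semigroup ring, integral closure is combinatorial: $t^k \in \ol{\m^n}$ if and only if $k \ge n m_1$ and $k \in S$. Indeed $\ol{\m^n}$ is generated by $\{t^k : k \in S,\ k \ge n m_1\}$, because the value semigroup of $\ol{\m^n}$ is $\{k \in S : k \ge nm_1\}$ — one inclusion is clear, and the reverse uses that $t^k$ with $k \ge nm_1$ satisfies an equation of integral dependence over $\m^n$ (its $n$-th power lies in $\m^{n^2} \cdot$ something, or more simply $k/m_1 \ge n$ forces $t^k$ integral over $(t^{m_1})^n \subseteq \m^n$).

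**First I would** establish the identity $\ol{\m^n} = (t^k : k \in S,\ k \ge n m_1)$ and then translate the reduction number into a statement about the Frobenius number. We have $\ol{r}(\m) = \ol{r}_Q(\m) = \min\{r : \ol{\m^{n+1}} = Q\ol{\m^n} \text{ for all } n \ge r\}$, and since $\depth \ol{G}(\m) \ge 1$ in dimension one (by \cite[Proposition 3.25]{Mar89}, so $\ol{G}(\m)$ is Cohen-Macaulay), $\ol{r}_Q(\m)$ is independent of $Q$. The equality $\ol{\m^{n+1}} = t^{m_1}\ol{\m^n}$ means: every $k \in S$ with $k \ge (n+1)m_1$ can be written as $m_1 + k'$ with $k' \in S$, $k' \ge nm_1$ — equivalently, $k - m_1 \in S$ for every $k \in S$ with $k \ge (n+1)m_1$. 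This fails exactly when there is some $k \in S$, $k \ge (n+1)m_1$, with $k - m_1 \notin S$, i.e. $k - m_1 \le F(S)$ (using $k - m_1 \ge nm_1 > 0$, so $k-m_1 \notin S$ forces $k - m_1 \le F(S)$ since all integers exceeding $F(S)$ lie in $S$). Conversely, if $(n+1)m_1 > F(S) + m_1$, i.e. $nm_1 > F(S)$, then any $k \ge (n+1)m_1$ has $k - m_1 \ge nm_1 > F(S)$, hence $k - m_1 \in S$ automatically (and $k-m_1 \ge nm_1$), so the equality holds.

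**Putting this together**: the equality $\ol{\m^{n+1}} = Q\ol{\m^n}$ holds for all $n \ge r$ if and only if $r m_1 > F(S)$ — one needs $nm_1 > F(S)$ for $n = r$, and then it persists for all larger $n$; and if $rm_1 \le F(S)$ one exhibits a failure at level $n = r$ by taking $k$ to be the smallest element of $S$ with $k \ge (r+1)m_1$ and $k - m_1 \le F(S)$ (such $k$ exists: e.g. any element of $S$ in the interval, and one checks the relevant residue class hits $F(S)+m_1$ appropriately — here a small case-analysis shows $k - m_1 \notin S$ can be arranged). Thus $\ol{r}(\m) = \min\{r : rm_1 > F(S)\} = \lfloor F(S)/m_1 \rfloor + 1 = \lceil (F(S)+1)/m_1 \rceil$. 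Since $\gcd(m_1, \ldots, m_n) = 1$ with $n \ge 2$, the Frobenius number $F(S)$ is not a multiple of $m_1$ (indeed $F(S) \not\equiv 0 \pmod{m_1}$ because $F(S) + m_1 m_1 / \gcd$... more directly, $F(S) \notin S$ but every multiple of $m_1$ that is $\ge m_1$ lies in $S$, and $F(S) \ge m_1 - 1 \ge 1$, so if $F(S) \equiv 0$ and $F(S) \ge m_1$ we'd get $F(S) \in S$, contradiction, while $F(S) < m_1$ is impossible unless $m_1 = 1$ which the hypothesis $n \ge 2$, $\gcd = 1$, $m_1 < m_2$ excludes... actually $m_1 \ge 2$ here), hence $\lfloor F(S)/m_1 \rfloor + 1 = \lceil F(S)/m_1 \rceil$, giving $\ol{r}(\m) = \lceil F(S)/m_1 \rceil$.

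**The main obstacle** will be the careful verification of the combinatorial identity $\ol{\m^n} = (t^k : k \in S,\ k \ge nm_1)$ — specifically the reverse inclusion, showing every such $t^k$ is integral over $\m^n$, which I expect to handle via the valuation/order criterion for integral closure in a one-dimensional analytically unramified local ring: $x \in \ol{\m^n}$ iff $v(x) \ge n\, v(\m) = n m_1$ for the normalized valuation $v$ on the integral closure $K\llbracket t\rrbracket$. The other delicate point is the exhibition of an explicit failure of the equality when $rm_1 \le F(S)$, needed for the sharpness half; this requires producing $k \in S$ with $(r+1)m_1 \le k$ and $k - m_1 \le F(S)$ and $k - m_1 \notin S$, which should follow by taking $k$ minimal in $S$ above $(r+1)m_1 = rm_1 + m_1 \le F(S) + m_1$ and arguing that minimality forces $k - m_1 \notin S$ (else $k - m_1 \ge nm_1$ would contradict minimality of $k$ only if $k - m_1 \ge (r+1)m_1$, so one instead notes $k \le F(S) + m_1 + m_1$... ) — this final bookkeeping is routine but needs to be written out with care regarding the boundary.
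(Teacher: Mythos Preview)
Your approach is essentially the same as the paper's: identify $\ol{\m^n}$ via the valuation criterion (the paper phrases this as $\ol{\m^i}=\{\alpha\in A:\ord(\alpha)\ge m_1 i\}$, obtained from $\ol{I}=\ol{IK\llbracket t\rrbracket}\cap A$), show $\ol{r}(\m)=\min\{r:rm_1>F(S)\}$, and convert $\lfloor F(S)/m_1\rfloor+1$ to $\lceil F(S)/m_1\rceil$ using $m_1\nmid F(S)$. For the lower bound the paper avoids your minimality bookkeeping by exhibiting the single witness $t^{F(S)+m_1}\in\ol{\m^k}\setminus Q\ol{\m^{k-1}}$ (where $k=\lceil F(S)/m_1\rceil$): it lies in $\ol{\m^k}$ since $m_1(k-1)<F(S)$ gives $F(S)+m_1>m_1k$, and it cannot lie in $t^{m_1}\ol{\m^{k-1}}$ since that would force $t^{F(S)}\in A$.
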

 
\begin{proof}
We note that $Q =  (t^{m_1}) $ is a minimal reduction of 
$\m = (t^{m_1}, t^{m_2}, \ldots, t^{m_n})$, 
the maximal ideal of $A$. 
By \cite[Example 7.3]{HKU} we have $\ol{\m^i} = \{\alpha \in A \mid \textrm{ord}(\alpha) \geq m_1i \}$ where $\textrm{ord}(\alpha)$ is the degree of the initial form of $\alpha$. 
Indeed, the natural inclusion map 
$A \to K\llbracket t \rrbracket$ is an integral extension. 
We have $\ol{I} = \ol{IK\llbracket t \rrbracket} \cap A$ 
for any ideal $I$ in $A$. Since $K\llbracket t \rrbracket$ is a PID, $(t^{m_1})K\llbracket t \rrbracket$ is integrally closed. 
As $Q = (t^m_1)$ is a minimal reduction of $\m$, 
$\ol{Q^i} = \ol{\m^i}.$ Let $k$ be the smallest integer such that $m_1k>F(S).$
\par     
Consider $\ol{\m^k} =  (t^{m_1k},  t^{m_1k+1}, \cdots )$. 
Hence 
\[
Q\ol{\m^k} = (t^{m_1(k+1)}, t^{m_1(k+1)+1} \ldots ) = \ol{\m^{k+1}}.
\] 
Therefore $\ol{r}(\m) \leq k$. 
\par 
By the definition of the Fr\"obenius number $m_1(k-1) \neq F(S)$. 
Hence, $m_1(k-1) < F(S)$. 
Hence $F(S)+ m_1 > m_1k$. 
Therefore $t^{F(S)+ m_1} \in \ol{\m^{k}}$. 
Suppose $t^{F(S)+ m_1} \in Q\ol{\m^{k-1}}$. 
Then $t^{F(S)+ m_1} = t^{m_1}b$ where $b \in A$. 
Since $t^{m_1}$ is a regular element in $k\llbracket t \rrbracket$, 
we obtain $t^{F(S)} = b \in A$. 
This is a contradiction. 
Hence $t^{F(S)+ m_1} \in \ol{m^{k}} \setminus Q\ol{\m^{k-1}}$. Therefore $\ol{r}(\m) = k$.
\end{proof}

\begin{exam}\label{Exam:3and4}
Let $A =K\llbracket t^3,t^4 \rrbracket$ be a semigroup ring generated by $t^3$ and $t^4$. 
The Fr\"obenius number $F(S)$ of the semigroup 
$S = \BN 3 +\BN 4$ is $5$. 
Since $S$ is symmetric, $A$ is Gorenstein with $e_0(\m)=3$. 
Also, $\ol{\m^i} = \{\alpha \in A \mid \textrm{ord}(\alpha) \geq 3i \}$.  Note that $Q = (t^3)$ is a minimal reduction of $\m$. 
Hence by Lemma \ref{lemma:redNumSemG}, $\ol{r}(\m) = 2$. 
Using Equation (\ref{redandpos}), $\ol{n}(\m) = 2-1 =1$. 
Hence $\ol{P}_{\m}(n) = \ol{H}_{\m}(n)$ for all $n \geq 2$. 
Therefore $\ol{e_1}(m)=2e_0(\m)-\ell(A/\ol{\m^2})=2 \cdot 3 -3 =3$. 
Since $e_0(\m) = \ol{e_1}(\m)$, by Corollary \ref{cor:red12_normal}, 
$\ols{G}(\m)$ is Gorenstein.
\end{exam}   

\begin{exam}\label{Exam:2andm}
Let $A =K\llbracket t^2,t^m \rrbracket$ be a semigroup ring 
generated by $t^2$ and $t^m$, where $m$ is an odd integer $\geq 3$. The Fr\"obenius number $F(S)$ of the semigroup 
$S = \BN 2 +\BN m$ is $m-2$. Since $S$ is symmetric, $A$ is Gorenstein. 
Also, $\ol{\m^i} = \{\alpha \in A \mid \textrm{ord}(\alpha) \geq 2i \}$.  Note that $Q = ( t^2 )$ is a minimal reduction of $\m$. 
Hence by Lemma \ref{lemma:redNumSemG}, $\ol{r}(\m) = (m-1)/2$.
Therefore $\ol{P}_{\m}(n) = \ol{H}_{\m}(n)$ for all $n \geq (m-1)/2$. 
Also, since $e_0(\m)=2$, we have 
\[
\ol{e_1}(\m) = e_0(\m)((m-1)/2)-\ell(A/\ol{\m^{(m-1)/2}})
=(m-1)-(m-1)/2=(m-1)/2. 
\]
Hence $\ol{r}(\m)= \ol{e_1}(\m) -e_0(\m) +\ell(A/\ol{m})+1$ and thus 
$\nr(\m)$ is maximal. 
Therefore, by Corollary \ref{cor:Maximal_normal}, $\olsi{G}(\m)$ is Gorenstein.
\end{exam}  

\begin{exam} \label{ex:456}
Let $A =K\llbracket t^4,t^5, t^6 \rrbracket$ be a semigroup ring 
generated by $4,5$ and $6$. 
The Fr\"obenius number of the semigroup $F(S) = 7$. 
Since $S$ is symmetric, $A$ is Gorenstein. 
By Lemma \ref{lemma:redNumSemG} we get $\ol{r}(\m) = 2$. 
Using similar arguments as above we obtain $e_0(\m) = 4$ 
and $\ol{e_1}(\m) = 4$. 
Hence, by Corollary \ref{cor:red12_normal},  
$\olsi{G}(\m)$ is Gorenstein.
\end{exam}    

\par 
The following example is from \cite[Example 7.3]{HKU}. 
They prove that $\ols{G}(\m)$ is not Gorenstein. 
We also prove the same by using our techniques.

\begin{exam} \label{ex:467}
Let $A =K\llbracket t^4,t^6, t^7 \rrbracket$ be a semigroup ring 
generated by $4,6$ and $7$. 
The Fr\"obenius number of the semigroup $F(S) = 9$ and 
the semigroup is symmetric. 
Hence $A$ is a one-dimensional Gorenstein local ring. 
Let $\m = (t^4, t^6, t^7)$. 
Then $\ol{\m^i} = \{\alpha \in A \mid \textrm{ord}(\alpha) \geq 4i \}$, 
and $Q = (t^4)$ is a minimal reduction of $\m$ 
with reduction number $\ol{r}(\m) = 3$ by Lemma \ref{lemma:redNumSemG}.  
By Equation (\ref{redandpos}), $\ol{n}(\m) = 3 -1 =2$. 
Hence $\ol{P}_{\m}(n) = \ol{H}_{\m}(n)$ for all $n \geq 3$. 
In particular, 
$\ol{e_1}(\m) = 3e_0(\m)-\ell(A/ \ol{\m^3})=3 \cdot 4-7=5$. 
Then since $\nr(\m)=3=\ol{e_1}(\m) - e_0(\m) + \ell(A/\m)+ 1$ is maximal 
but $e_0(\m) = 4$, by Theorem \ref{thm:GorMaximal}, 
$\ols{G}(\m)$ is {\it not} Gorenstein.
\end{exam}    

In the rest of this section, let $(A,\m)$ be a $2$-dimensional 
excellent normal local domain with algebraically closed residue field, and 
let $I \subset A$ be an $\m$-primary integrally closed ideal.  

\begin{exam}
Let $A$ and $I$ be as above. 
If $I$ is a {\it $p_g$-ideal} (i.e. $\ol{r}(\m)=1$), then $\nr(\m)$ is maximal. 
\par 
Also, if $I$ is an {\it elliptic ideal} (i.e. $\ol{r}(\m)=2$), 
then $\nr(\m)$ is maximal if and only if $I$ is a 
{\it strongly elliptic ideal} (i.e. $\ol{e_2}(\m)=1$). 
See \cite{OWY2016, ORWY2022} for details. 
\end{exam}

\begin{exam}
Suppose that $I$ is a $p_g$-ideal. 
Then $\ols{G}(I)$ is Gorenstein if and only if $I$ is {\it good}, that is, 
$I^2=QI$ and $I=Q\colon I$ for some minimal reduction $Q$ of $I$. 
\par 
On the other hand, for $A=\mathbb{C}[[x,y,z]]/(x^3+y^3+z^3)$ and $\m=(x,y,z)A$,  we have that $\ol{r}(\m)=2$ and 
$\ols{G}(\m)$ is Gorenstein but $\m$ is {\it not} 
a good ideal.  
\end{exam}

\section{Gorensteinness of $\ol{G}(\m)$ in Zariski hypersurface rings}\label{sec:GorGm}
 
\par 
Consider the following setup in the rest of the paper. 
Let $K$ be an algebraically closed field, and 
let $m \ge 1$, $2 \le a \le b$ be integers, 
and put $S=K[[x, y_1,\ldots, y_m]]$ be a formal power series ring 
with $(m+1)$-variables over $K$. 
Then $S$ is a complete regular local ring with 
the unique maximal ideal $\m_S =(x, y_1,\ldots,y_m)$. 
Put $\underline{y}=y_1,\ldots,y_m$.
Let $g$, $f$  be polynomials such that  
\begin{eqnarray*}
g &= & g(\underline{y}) \in (\underline{y})^b 
\setminus (\underline{y})^{b+1}, \\
f &=& f(x,\underline{y})=x^a-g(\underline{y}).
\end{eqnarray*}
Then a hypersurface $A=S/(f)$  
is called \lq\lq  {\it Zariski-type hypersurface}'' if $a \ne 0$ in $K$. 
Let $Q=(\underline{y})A$.   
Note that $Q$ is a minimal reduction of $\m=\m_SA$. 
Indeed, $\m^a=Q\m^{a-1}$. 
In this section, we compute $\br(\m)$ and prove the Cohen-Macaulayness of $\overline{G}(\m)$. 
In order to do that, we generalize several results for Brieskorn hypersurfaces in \cite[Section 3]{OWY2019} to that of Zariski-type hypersurfaces. 
We fix the following notation for the rest of the paper:
\[
d=\gcd(a,b), \quad a'=\frac{a}{d}, \quad b'=\frac{b}{d}
\quad 
\text{and} 
\quad 
n_k=\lfloor \frac{kb}{a} \rfloor \quad \text{for $k=1,2,\ldots,a-1$.}
\]
\par 
For all $n \ge 1$, we define the following two ideals:
\begin{eqnarray*}
J_n &:=& Q^n+xQ^{n-n_1}+x^2Q^{n-n_2} + \cdots + x^{a-1}
Q^{n-n_{a-1}}, \\
I_n &:=& \left(x^k y_1^{i_1}\cdots y_m^{i_m} \,\big|\,kb'+(i_1+\cdots+i_m)a' \ge n, k,i_1, \ldots, i_m \in \mathbb{Z}_{\geq 0} \right)A.
\end{eqnarray*}

\begin{prop} \label{JNvsIN}
$J_n=I_{na'}$ for every $n \ge 1$. 
\end{prop}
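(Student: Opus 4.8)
The plan is to prove the equality by comparing monomial generators. Both $J_n$ and $I_{na'}$ are generated over $A$ by the images of monomials in $x,y_1,\dots,y_m$; write $y^{\underline{i}}=y_1^{i_1}\cdots y_m^{i_m}$ and $|\underline{i}|=i_1+\cdots+i_m$. The argument turns on two facts only: the arithmetic identity $ab'=ba'$ (both equal $ab/d$), and the relation $x^a=g(\underline{y})\in Q^b$ holding in $A=S/(x^a-g)$ because $g\in(\underline{y})^b$. I also set $n_0:=0$ and read $Q^j=A$ for $j\le 0$, so that $J_n=\sum_{k=0}^{a-1}x^kQ^{\,n-n_k}$.

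The key elementary lemma is a translation of the weighted inequality into a degree bound: for a fixed $k$ with $0\le k\le a-1$, the monomial $x^ky^{\underline{i}}$ satisfies $kb'+|\underline{i}|a'\ge na'$ if and only if $|\underline{i}|\ge n-n_k$ (vacuously true when $n-n_k\le 0$). This amounts to $|\underline{i}|\ge\frac{na'-kb'}{a'}=n-\frac{kb}{a}$ together with $|\underline{i}|\in\mathbb{Z}_{\ge 0}$ and $\lceil n-kb/a\rceil=n-\lfloor kb/a\rfloor=n-n_k$ (valid since $n\in\mathbb{Z}$), using $b'/a'=b/a$. Consequently the subideal of $I_{na'}$ generated by those monomials with $x$-exponent $k\in\{0,\dots,a-1\}$ is precisely $\sum_{k=0}^{a-1}x^kQ^{\,n-n_k}=J_n$; in particular $J_n\subseteq I_{na'}$.

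For the reverse inclusion $I_{na'}\subseteq J_n$ I would show, by induction on $k$, that every generator $x^ky^{\underline{i}}$ of $I_{na'}$ (i.e.\ with $kb'+|\underline{i}|a'\ge na'$) lies in $J_n$. The base case $0\le k\le a-1$ is the lemma above. For $k\ge a$, rewrite $x^ky^{\underline{i}}=x^{k-a}g(\underline{y})y^{\underline{i}}$ in $A$; since $g$ is a $K$-linear combination of monomials $y^{\underline{\alpha}}$ with $|\underline{\alpha}|\ge b$, the product is a $K$-linear combination of monomials $x^{k-a}y^{\underline{\alpha}+\underline{i}}$, and each of these satisfies $(k-a)b'+|\underline{\alpha}+\underline{i}|a'\ge(k-a)b'+(b+|\underline{i}|)a'=kb'+|\underline{i}|a'\ge na'$ by $ab'=ba'$, hence is again a generator of $I_{na'}$ but with strictly smaller $x$-exponent; the inductive hypothesis then places each of them, and therefore $x^ky^{\underline{i}}$, in $J_n$. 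Together with the first inclusion this gives $J_n=I_{na'}$.

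The only step that needs genuine care is the floor/ceiling bookkeeping in the reduced range $0\le k\le a-1$, especially the boundary case $n\le n_k$, where one must check everything stays consistent with the convention $Q^{\le 0}=A$. The induction for large powers of $x$ is then essentially formal; its one real feature is that $g$ is not a monomial but a sum of monomials of $\underline{y}$-degree $\ge b$, which is exactly what lets the weighted inequality survive the substitution $x^a\mapsto g$.
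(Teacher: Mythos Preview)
Your proof is correct and follows essentially the same approach as the paper: both inclusions rest on the equivalence, for a monomial $x^ky^{\underline{i}}$ with $0\le k\le a-1$, between $kb'+|\underline{i}|a'\ge na'$ and $|\underline{i}|\ge n-n_k$, obtained from $b'/a'=b/a$ and the floor/ceiling identity $\lceil n-kb/a\rceil=n-\lfloor kb/a\rfloor$. Your explicit induction on $k$ using $x^a=g\in Q^b$ (and $ab'=ba'$) to dispose of generators with $k\ge a$ is more careful than the paper, which tacitly concludes $x^kQ^{n-n_k}\subset J_n$ without restricting $k$ or commenting on the reduction.
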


\begin{proof}
First we prove $J_n \subset I_{na'}$. 
It is enough to show $x^kQ^{n-n_k} \subset I_{na'}$ for every $k=1,2,\ldots,a-1$. 
For each $k$ with $1 \le k \le a-1$, if $i_1+\cdots +i_m \ge n-n_k$, 
then 
\[
kb'+(i_1+\cdots +i_m)a' 
 \ge  kb'+(n-n_k)a' 
 \ge  kb'+\big(n-\frac{kb'}{a'}\big)a'=na',  
\]
as required. Next we prove the converse. 
If $x^ky_1^{i_1}\cdots y_m^{i_m} \in I_{na'}$, then 
$kb'+(i_1+\cdots+i_m)a' \ge na'$. 
Thus
\[
n_k=\lfloor \frac{kb'}{a'} \rfloor \ge n-(i_1+\cdots+i_m), 
\;\text{that is,}\; 
i_1+\cdots+i_m \ge n-n_k. 
\]
Hence $x^ky_1^{i_1}\cdots y_m^{i_m} \in x^k Q^{n-n_k}$. 
This yields $I_{na'} \subset J_n$. 
\end{proof}

\begin{prop} \label{JN}
Under the notation above, 
\begin{enumerate}
\item[\rm{(a)}] For every $k,n \ge 1$, $x^k \in \overline{Q^n}$ 
if and only if $n \le n_k$. 
\item[\rm{(b)}] 
$Q^n \subset J_n \subset \overline{Q^n}$, that is, 
$\overline{J_n}=\overline{Q^n}$ for each $n \ge 1$. 
\end{enumerate}
\end{prop}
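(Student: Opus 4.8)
The plan is to prove (a) outright and then deduce (b) formally from the easy half of (a).

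For the forward implication of (a) I would exhibit an explicit equation of integral dependence. Since $x^a=g(\underline{y})$ lies in $(\underline{y})^bA=Q^b$, raising to the $k$-th power gives $(x^k)^a=g^k\in Q^{kb}\subseteq Q^{an_k}$, the last inclusion because $an_k=a\lfloor kb/a\rfloor\le kb$. Hence $T^a-g^k$ is an equation of integral dependence of $x^k$ over the ideal $Q^{n_k}$, so $x^k\in\overline{Q^{n_k}}\subseteq\overline{Q^n}$ whenever $n\le n_k$; this argument is valid for every $k\ge1$.

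For the converse implication I would specialize $A$ onto a discrete valuation ring. Because $K$ is algebraically closed, hence infinite, the degree-$b$ initial form $g^{*}$ of $g$ is a nonzero form, so $g^{*}(\lambda)\ne0$ for some $\lambda=(\lambda_1,\dots,\lambda_m)\in K^m$. Working in $B=K[[s]]$ with its $s$-adic valuation $v_s$, the substitution $y_i\mapsto\lambda_i s^a$ carries $g$ to $s^{ab}u$ with $u\in B$ a unit ($u(0)=g^{*}(\lambda)$); since $a\ne0$ in $K$, Hensel's lemma yields a unit $\eta\in B$ with $\eta^a=u$, and then $\xi:=s^b\eta$ satisfies $\xi^a=g(\lambda_1s^a,\dots,\lambda_ms^a)$ and $v_s(\xi)=b$. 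The assignments $x\mapsto\xi$, $y_i\mapsto\lambda_i s^a$ therefore define a continuous $K$-algebra homomorphism $S\to B$ that kills $f=x^a-g$, hence a homomorphism $\phi\colon A\to B$. Integral dependence is preserved under $\phi$, so $x^k\in\overline{Q^n}$ would force $\xi^k\in\overline{(s^{an})B}=(s^{an})B$ (using $\phi(Q)B=(s^a)B$ and that $B$ is a DVR), and comparing $s$-adic valuations gives $kb=v_s(\xi^k)\ge an$, i.e.\ $n\le n_k$. This completes (a).

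Given (a), part (b) is immediate: $Q^n\subseteq J_n$ is built into the definition of $J_n$, and for the reverse inclusion it suffices to note that $x^kQ^{n-n_k}\subseteq\overline{Q^{n_k}}\cdot\overline{Q^{n-n_k}}\subseteq\overline{Q^{n_k}Q^{n-n_k}}=\overline{Q^n}$ for $1\le k\le a-1$ (reading $Q^j=A$ for $j\le0$), where $x^k\in\overline{Q^{n_k}}$ by (a) and $\overline{I}\,\overline{J}\subseteq\overline{IJ}$ is the standard multiplicativity of integral closure; taking integral closures in $Q^n\subseteq J_n\subseteq\overline{Q^n}$ then gives $\overline{J_n}=\overline{Q^n}$. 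The only genuinely delicate point is the construction of $\phi$ in the converse half of (a): checking that the substitution is a well-defined homomorphism out of $A$ is exactly where the hypotheses \lq\lq$a\ne0$ in $K$\rq\rq{} (existence of the Hensel root) and $g\notin(\underline{y})^{b+1}$ (so that $g^{*}\ne0$ and hence $v_s(\xi)=b$) are used; all the remaining steps are routine properties of integral closure of ideals, for which I would cite \cite{HS06}.
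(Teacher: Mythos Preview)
Your proof is correct, but the converse direction of (a) takes a genuinely different route from the paper. The paper argues by contradiction using the asymptotic characterization of integral closure: if $x^k\in\overline{Q^n}$ with $n\ge n_k+1$, then for some nonzerodivisor $c$ one has $cx^{ka\ell}\subset Q^{an\ell}$ for $\ell\gg0$; combining this with Artin--Rees and the inequality $an\ge kb+1$ gives $g^{k\ell}\in Q^{kb\ell+1}$, and after a generic linear change of the $y_i$ so that $g\equiv y_1^b\pmod{(y_2,\dots,y_m)}$ this contradicts the regularity of the sequence $y_1,\dots,y_m$ on $A$. Your valuative specialization $\phi\colon A\to K[[s]]$ is cleaner and more conceptual, but note that it consumes the Zariski-type hypothesis $a\ne0$ in $K$ (needed for the Hensel lift of an $a$-th root), whereas the paper's argument does not; Proposition~\ref{JN} is in fact stated before that hypothesis is imposed. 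In the intended applications this costs nothing, since the Zariski-type condition is assumed throughout the rest of the section anyway.

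For (b) your argument is essentially the same as, and slightly more economical than, the paper's: the paper actually proves the biconditional $x^ky_1^{i_1}\cdots y_m^{i_m}\in\overline{Q^n}\iff i_1+\cdots+i_m\ge n-n_k$ via $\overline{Q^n}:y_j=\overline{Q^{n-1}}$, but only the forward implication is needed for the stated inclusion $J_n\subset\overline{Q^n}$, which you obtain directly from multiplicativity of integral closure.
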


\begin{proof}
(a) We first show that if $n \le n_k$ then $x^k \in \overline{Q^n}$. 
By definition of $n_k$, we get 
\[
(x^k)^a=x^{ka} \in (Q^b)^k =Q^{kb} \subset Q^{an_k}=(Q^{n_k})^a.  
\]
Hence $x^k \in \overline{Q^{n_k}} \subset \overline{Q^{n}}$. 
\par \vspace{2mm}
Suppose $x^k \in \overline{Q^{n}}$. 
We will show $n \le n_k$. 
By assumption, there exists a nonzero divisor $c \in A$ such that 
$c(x^k)^{\ell} \subset Q^{n\ell}$ for sufficiently large $\ell$. 
By Artin-Rees' lemma, we can take an integer $\ell_0 \ge 1$ such that 
if $\ell \gg 0$, then $Q^{\ell} \cap cA =c(Q^{\ell} \colon c) \subset cQ^{\ell-\ell_0}$. 
Thus $Q^{\ell} \colon c \subset Q^{\ell-\ell_0}$ 
because $c$ is a  nonzero divisor.
\par 
Now suppose that $n \ge n_k+1$. 
By the choice of $n_k$, we have 
\[
\frac{kb}{a}+\frac{1}{a} \le n_k+1 \le n.
\]
Then $kb+1 \le a(n_k+1) \le an$. 
If $\ell \ge \ell_0+1$, then 
\[
g^{k\ell} =x^{ak\ell} \in Q^{an\ell}\colon c \subset Q^{an\ell-\ell_0}
\subset Q^{(kb+1)\ell-\ell_0}
\subset Q^{kb\ell+1}. 
\]
Moreover, since $g \in Q^b \setminus Q^{b+1}$, if necessary, by replacing $\underline{y}=y_1,y_2,\ldots,y_m$ 
with the other minimal generators of $Q$, 
we may assume that $g \equiv y_1^{b} \pmod{(y_2,\ldots,y_m)A}$.  
Hence $y_1^{bk\ell} \in (y_1^{kb\ell+1},y_2,\ldots, y_m)$. 
This contradicts the fact that $y_1,y_2,\ldots,y_m$ forms a regular sequence on $A$. 
\par \vspace{2mm} 
(b)
It is enough to show 
$x^ky_1^{i_1}\cdots y_m^{i_m} \in \overline{Q^n}$ 
if and only if $i_1+\cdots+i_m \ge n-n_k$. 
Since $\overline{G}(Q) \cong \overline{\mathcal{R}'}(Q)/t^{-1}\overline{\mathcal{R}'}(Q)$ and 
$\overline{\mathcal{R}'}(Q)$ is normal, we have $\depth \overline{G}(Q) \ge 1$. 
Hence we obtain $\overline{Q^n} \colon y_j = \overline{Q^{n-1}}$ for every $n,j$. 
So one can easily see that 
\[
x^ky_1^{i_1}\cdots y_m^{i_m} \in \overline{Q^n} \Longleftrightarrow 
x^k \in \overline{Q^{n-(i_1+\cdots+i_m)}}. 
\]
By (a), the last condition means that 
$i_1+ \cdots +i_m \ge n-n_k$.  Hence we get the required inclusions. 
\end{proof}

\par \vspace{2mm}
Set $\mathcal{I}=\{I_{n}\}_{n \in \bbZ}$, which is a filtration of ideals in $A$ introduced above and, $I_{n}=A$ for every integer $n \le 0$. 
We put
\[
\mathcal{R}'(\mathcal{I})= \bigoplus_{n \in \bbZ} I_{n} \,t^{n}  \subset A[t,t^{-1}], \qquad 
G(\mathcal{I})  =\bigoplus_{n \ge 0} I_{n}/I_{n+1} 
\cong \mathcal{R}'(\mathcal{I})/t^{-1}\mathcal{R}'(\mathcal{I}). 
\]

In the following, we set $\underline{y}t^k = y_1t^k,\ldots,y_mt^k$ and 
$\underline{Y}=Y_1,\ldots,Y_m$. 

\begin{lem} \label{IN-filt}
Write $g=g_b+g_{b+1}+\cdots+g_c$, where $g_i$ denotes a homogeneous polynomial of degree $i=b,b+1,\ldots,c$. 
Then 
$\mathcal{R}'(\mathcal{I}) \cong A[xt^{b'}, \underline{y}t^{a'}, t^{-1}]$  and $ G(\mathcal{I})  \cong K[X, \underline{Y}]/(X^a-g_b(\underline{Y})).$
\end{lem}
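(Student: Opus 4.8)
The plan is to set up the Rees algebra $\mathcal{R}'(\mathcal{I})$ by hand as a subring of $A[t,t^{-1}]$ and identify it with $A[xt^{b'},\underline{y}t^{a'},t^{-1}]$, then pass to the associated graded ring by killing $t^{-1}$. First I would observe that by the very definition of $I_n$ as the ideal generated by monomials $x^k y_1^{i_1}\cdots y_m^{i_m}$ with $kb'+(i_1+\cdots+i_m)a'\ge n$, a monomial $x^k\underline{y}^{\,\underline{i}}\,t^n$ lies in $\mathcal{R}'(\mathcal{I})$ precisely when $n\le kb'+(|\underline{i}|)a'$; hence multiplying by $t^{-1}$ freely, every such element is an $A$-multiple of $(xt^{b'})^k(\underline{y}t^{a'})^{\underline{i}}$ times a nonpositive power of $t$. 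This shows $\mathcal{R}'(\mathcal{I})\subseteq A[xt^{b'},\underline{y}t^{a'},t^{-1}]$, and the reverse containment is immediate since $xt^{b'}$ and each $y_jt^{a'}$ visibly lie in $\mathcal{R}'(\mathcal{I})$ (they are generators of $I_{b'}$, resp. $I_{a'}$), and $t^{-1}\in\mathcal{R}'(\mathcal{I})$ because $I_n=A$ for $n\le 0$. So $\mathcal{R}'(\mathcal{I})= A[xt^{b'},\underline{y}t^{a'},t^{-1}]$.

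Next I would compute the associated graded ring via $G(\mathcal{I})\cong \mathcal{R}'(\mathcal{I})/t^{-1}\mathcal{R}'(\mathcal{I})$. Introduce the polynomial ring $K[X,\underline{Y}]$ and the $K$-algebra surjection $K[X,\underline{Y},U]\twoheadrightarrow \mathcal{R}'(\mathcal{I})$ sending $X\mapsto xt^{b'}$, $Y_j\mapsto y_jt^{a'}$, $U\mapsto t^{-1}$, which descends modulo $U$ to a surjection $K[X,\underline{Y}]\twoheadrightarrow G(\mathcal{I})$. The key relation is $x^a=g(\underline{y})=g_b(\underline{y})+g_{b+1}(\underline{y})+\cdots$ in $A$; multiplying by $t^{ab'}=t^{a'b}$ gives $(xt^{b'})^a = g_b(\underline{y})t^{a'b}+g_{b+1}(\underline{y})t^{a'b}+\cdots$. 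Since $g_b$ is homogeneous of degree $b$, $g_b(\underline{y})t^{a'b}=g_b(\underline{y}t^{a'})$ is a genuine element of $\mathcal{R}'(\mathcal{I})$ in the "correct" degree, while each higher term $g_{b+j}(\underline{y})t^{a'b}=t^{-a'j}\cdot g_{b+j}(\underline{y}t^{a'})$ carries an extra factor $t^{-a'j}=U^{a'j}$ and therefore maps to $0$ in $G(\mathcal{I})$. Hence $X^a-g_b(\underline{Y})$ lies in the kernel of $K[X,\underline{Y}]\twoheadrightarrow G(\mathcal{I})$, giving a surjection $K[X,\underline{Y}]/(X^a-g_b(\underline{Y}))\twoheadrightarrow G(\mathcal{I})$.

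Finally I would argue this surjection is an isomorphism by a Hilbert-function/dimension count. Both rings are graded (with $\deg X=b'$, $\deg Y_j=a'$), both are domains of Krull dimension $m$ — the target because $\mathcal{R}'(Q)$, hence $\mathcal{R}'(\mathcal{I})$ (which shares the same normalization picture), has the expected dimension $m+1$ and killing $t^{-1}$ drops it by one; the source because $X^a-g_b(\underline{Y})$ is a nonzerodivisor in $K[X,\underline{Y}]$ as $g_b\ne 0$. A surjection of $m$-dimensional graded domains over a field whose source is a hypersurface must be an isomorphism (its kernel is a prime of the same dimension, hence $0$). Alternatively, one can identify the graded pieces directly: $\dim_K[G(\mathcal{I})]_n = \dim_K I_n/I_{n+1}$ equals the number of monomials $x^k\underline{y}^{\,\underline{i}}$ with $kb'+|\underline{i}|a'=n$ and $0\le k\le a-1$ (using Proposition \ref{JNvsIN} and \ref{JN} to see $I_n$ is spanned by such monomials and that $x^a$ is redundant), which matches $\dim_K[K[X,\underline{Y}]/(X^a-g_b(\underline{Y}))]_n$. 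The main obstacle I anticipate is the bookkeeping for the associated graded ring: one must be careful that (i) the lower-order terms $g_{b+j}$ of $g$ really do die in $G(\mathcal{I})$ — this is where writing the relation with explicit powers of $U=t^{-1}$ matters — and (ii) that no additional relations appear, i.e. that the monomials $x^k\underline{y}^{\,\underline{i}}$ with $0\le k\le a-1$ stay $K$-linearly independent in the successive quotients $I_n/I_{n+1}$, for which the regular-sequence property of $y_1,\dots,y_m$ on $A$ (already invoked in the proof of Proposition \ref{JN}) and the relation $x^a\in(\underline{y})^b$ together do the job.
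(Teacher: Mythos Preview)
Your approach mirrors the paper's: identify $\mathcal{R}'(\mathcal{I})$ as the subring $A[xt^{b'},\underline{y}t^{a'},t^{-1}]$, write out the relation $X^a=g_b(\underline{Y})+g_{b+1}(\underline{Y})U^{a'}+\cdots$, and kill $U=t^{-1}$. The paper in fact stops there and simply asserts the isomorphism for $G(\mathcal{I})$; you are right that an injectivity argument is needed, and you supply two.

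Your first injectivity argument has a genuine gap: neither side need be a domain. That $X^a-g_b(\underline{Y})$ is a nonzerodivisor in $K[X,\underline{Y}]$ gives only $\dim=m$, not integrality; over an algebraically closed field $X^a-g_b$ can factor (e.g.\ when $a\mid b$ and $g_b$ is an $a$th power). The paper itself only proves $G(\mathcal{I})$ reduced later (Proposition~\ref{IntClos}), and even that uses the extra hypothesis $a\ne 0$ in $K$, which is not in force in Lemma~\ref{IN-filt}. So the ``surjection of equidimensional domains'' line does not go through.

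Your alternative Hilbert-function argument is the correct fix, and it does not need Propositions~\ref{JNvsIN} or~\ref{JN}. The point is simply that $A$ is free over $K[[\underline{y}]]$ with basis $1,x,\dots,x^{a-1}$, so the monomials $x^k\underline{y}^{\,\underline{i}}$ with $0\le k\le a-1$ form a $K$-basis of $A$; the ideal $I_n$ is then exactly the $K$-span of those with weight $kb'+|\underline{i}|a'\ge n$ (reducing $k\ge a$ via $x^a=g(\underline{y})$ only raises the weight), hence $\dim_K I_n/I_{n+1}$ counts monomials of weight exactly $n$. This matches the graded dimension of $K[X,\underline{Y}]/(X^a-g_b(\underline{Y}))$ under $\deg X=b'$, $\deg Y_j=a'$, so the surjection is an isomorphism degree by degree.
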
 

\begin{proof}
We have 
\begin{equation*}
 \begin{split}
  \mathcal{R}'(\mathcal{I}) 
  & \cong \bigoplus_{n \in \bbZ} I_{n} \,t^{n} \\
  & \cong \sum_{k, i_1, \ldots, i_m \in \mathbb{Z}_{\geq 0}} Kx^k 
  y_1^{i_1}\cdots y_m^{i_m} \, t^{kb' +(i_1+\cdots+i_m)a'} +\sum_{n \geq 0}A \,t^{-n} \\
  & = \sum_{k, i_1, \ldots, i_m \in \mathbb{Z}_{\geq 0}}  K(xt^{b'})^k (y_1t^{a'})^{i_1} \cdots (y_mt^{a'})^{i_m} +\sum_{n \geq 0}A \, t^{-n} \\
  & = A[xt^{b'}, y_1t^{a'}, \ldots, y_mt^{a'}, t^{-1}].
  \end{split}
\end{equation*}
Put $X = xt^{b'}, Y_i = y_it^{a'}$ for $i = 1, 2, \ldots, m$  and $U = t^{-1}$. Then
\begin{equation*}
 \begin{split}
  X^a & = x^at^{ab'} = x^a t^{a'b} \\
        & = g_b(y_1, \ldots, y_m)t^{a'b} +g_{b+1}(y_1, \ldots, y_m)t^{a'b}+ \cdots + g_c(y_1, \ldots, y_m)t^{a'b} \\
        & = g_b(Y_1, Y_2, \ldots, Y_m) +g_{b+1}(Y_1, Y_2, \ldots, Y_m)t^{-a'} + \cdots + g_c(Y_1, Y_2, \ldots, Y_m)t^{-(c-b)a'} \\
        & = g_b(Y_1, Y_2, \ldots, Y_m) +g_{b+1}(Y_1, Y_2, \ldots, Y_m)U^{a'} + \cdots + g_c(Y_1, Y_2, \ldots, Y_m)U^{(c-b)a'}.
    \end{split}
\end{equation*}
Hence 
\begin{equation*}
    \begin{split}
        G(\mathcal{I}) & \cong \mathcal{R}'(\mathcal{I})/t^{-1}\mathcal{R}'(\mathcal{I}) \\
        & \cong  k[X, Y_1, Y_2, \ldots, Y_m]/(X^a-g_b(\underline{Y})).
    \end{split}
\end{equation*}
\end{proof}

\par \vspace{2mm}
The normality of $A$ is needed in the following proposition. 

\begin{prop} \label{IntClos}
Suppose that $A$ is a Zariski-type hypersurface. 
Under the notation as above, 
\begin{enumerate}
\item[\rm{(a)}] $G(\mathcal{I})$ is reduced. 
\item[\rm{(b)}] $\overline{I_{n}}=I_{n}$ for every $n \ge 1$. 
\item[\rm{(c)}] $\overline{J_{n}}=J_{n}$ for every $n \ge 1$. 
\end{enumerate}
\end{prop}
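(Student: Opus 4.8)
The plan is to deduce all three parts from the normality of the extended Rees algebra $\mathcal{R}'(\mathcal{I})$, with part (a) supplying the crucial input. \emph{For (a):} by Lemma~\ref{IN-filt} we have $G(\mathcal{I})\cong K[X,\underline{Y}]/(h)$ with $h=X^a-g_b(\underline{Y})$, so it suffices to show $h$ has no repeated irreducible factor in the UFD $K[X,\underline{Y}]$. If $p^2\mid h$ for an irreducible $p$, then $p\mid \partial h/\partial X=aX^{a-1}$; since $a\neq 0$ in $K$ and $a\geq 2$, $p$ must be an associate of $X$, forcing $X\mid h$ and hence $X\mid g_b$, contradicting $g_b\neq 0$ (which holds because $g\in(\underline{y})^b\setminus(\underline{y})^{b+1}$). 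Thus $h$ is squarefree and $G(\mathcal{I})$ is reduced; note this step uses only $a\neq 0$ in $K$, not the normality of $A$.

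Next I would show $R:=\mathcal{R}'(\mathcal{I})$ is normal. By Lemma~\ref{IN-filt}, $R=A[xt^{b'},\underline{y}t^{a'},t^{-1}]$ is a Noetherian domain contained in $A[t,t^{-1}]$, the element $u:=t^{-1}$ is a nonzerodivisor, $R[1/u]=A[t,t^{-1}]$ is normal since $A$ is, and $R/uR\cong G(\mathcal{I})$ is reduced by (a). Normality then follows from Serre's criterion: at $\mathfrak{p}\not\ni u$ the localization $R_\mathfrak{p}$ is a localization of the normal ring $A[t,t^{-1}]$; at $\mathfrak{p}\ni u$ of height one, $R_\mathfrak{p}/uR_\mathfrak{p}=(R/uR)_{\bar{\mathfrak{p}}}$ is a reduced $0$-dimensional local ring, hence a field, so $\mathfrak{p}R_\mathfrak{p}=(u)$ and $R_\mathfrak{p}$ is a DVR; this gives $(R_1)$, and the usual glueing argument ($R/uR$ is $(S_1)$, $A[t,t^{-1}]$ is $(S_2)$) gives $(S_2)$. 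Equivalently, one may cite the standard fact that $\mathcal{R}'(\mathcal{F})$ is normal whenever $A$ is normal and $G(\mathcal{F})$ is reduced.

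\emph{For (b):} given $z\in\overline{I_n}$ with $n\geq 1$, fix an equation $z^k+c_1z^{k-1}+\cdots+c_k=0$ with $c_i\in I_n^{\,i}$. Since $I_pI_q\subseteq I_{p+q}$ for all $p,q$ (the exponent conditions defining the $I_n$ are additive, and the relation $x^a=g(\underline{y})$ preserves them because $ab'=a'b$ and $g\in(\underline{y})^b$), we have $I_n^{\,i}\subseteq I_{ni}$, so $c_it^{ni}\in R$ and $zt^n$ is integral over $R$; as $R$ is normal with fraction field $\operatorname{Frac}(A[t,t^{-1}])$ we get $zt^n\in R$, and the degree-$n$ component of this membership says $z\in I_n$. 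Hence $\overline{I_n}=I_n$. \emph{For (c):} by Proposition~\ref{JNvsIN}, $\overline{J_n}=\overline{I_{na'}}=I_{na'}=J_n$.

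The main obstacle is the normality of $\mathcal{R}'(\mathcal{I})$ — concretely, checking $(R_1)$ at the height-one primes containing $t^{-1}$, which is precisely where reducedness of $G(\mathcal{I})$ from part (a) is used; parts (b) and (c) are then formal consequences.
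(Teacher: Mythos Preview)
Your proof is correct and follows the same strategy as the paper: establish reducedness of $G(\mathcal{I})$ via the partial derivative in $X$ (the paper phrases this as the Jacobian criterion for $(R_0)$ combined with $(S_1)$, while you argue squarefreeness directly, but these are equivalent for a hypersurface), then use this together with the normality of $A$ to obtain integral closedness of each $I_n$, and finally deduce (c) from Proposition~\ref{JNvsIN}. The only difference is packaging: for (b) the paper simply invokes \cite[Lemma 3.1]{MMV12} (which is precisely the statement that if $A$ is normal and $G(\mathcal{F})$ is reduced then every $F_n$ is integrally closed), whereas you unwind that lemma by proving normality of $\mathcal{R}'(\mathcal{I})$ via Serre's criterion; your $(S_2)$ step is terse but valid since both $A[t,t^{-1}]$ and $G(\mathcal{I})$ are Cohen-Macaulay.
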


\begin{proof}
(a) By Lemma \ref{IN-filt}, since $G(\mathcal{I}) \cong K[X, \underline{Y}]/(X^a-g_b(\underline{Y}))$ is a hypersurface, it satisfies 
the Serre condition $(S_1)$. 

\par \vspace{1mm}
Put $f=X^a-g_b(\underline{Y})$. 
Then 
\[
\height \left(\frac{\partial f}{\partial X}, 
\frac{\partial f}{\partial Y_1}, \ldots, \frac{\partial f}{\partial Y_m}, f \right) \ge \height (X, g_b(\underbar{Y})) \ge 1.
\] 
Hence $G(\mathcal{I})$ satisfies $(R_0)$ and it is reduced. 
\par \vspace{2mm} \par \noindent 
(b) It follows from \cite[Lemma 3.1]{MMV12}.

\par \vspace{2mm}\par \noindent 
 (c) Since $J_n=I_{n\alpha'}$ for every $n \ge 1$ by Proposition \ref{JNvsIN},  
$J_n$ is also integrally closed by (b). 
 \end{proof}

\par \vspace{2mm}
The following theorem generalizes \cite[Theorem 3.1]{OWY2019}. 

\begin{thm} \label{S1-Main}
Under the notation as above, 
\begin{enumerate}
\item[\rm{(a)}] $\overline{\m^{n}}=\overline{Q^n}=Q^n
+xQ^{n-n_1}+x^2Q^{n-n_2}+\cdots+x^{a-1}Q^{n-n_{a-1}}$. 
\item[\rm{(b)}] $\br_Q(\m)=\nr_Q(\m)=n_{a-1}= \lfloor \frac{(a-1)b}{a}\rfloor$. 
\item[\rm{(c)}] $\overline{G}(\m)$ is Cohen-Macaulay. 
\end{enumerate}
\end{thm}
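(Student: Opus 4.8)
The plan is to establish parts (a), (b), (c) of Theorem~\ref{S1-Main} in that order, since each feeds into the next. For part (a), I would first show the inclusions $Q^n \subseteq J_n \subseteq \overline{Q^n}$, which is already available from Proposition~\ref{JN}(b). It then remains to prove $\overline{Q^n} \subseteq J_n$. For this, note that by Proposition~\ref{JNvsIN} we have $J_n = I_{na'}$, and $J_n$ is integrally closed by Proposition~\ref{IntClos}(c). Hence $\overline{Q^n} = \overline{J_n} = J_n$. The equality $\overline{\m^n} = \overline{Q^n}$ follows because $Q$ is a minimal reduction of $\m$, so $\overline{\m^n} = \overline{Q^n}$ for all $n$. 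This gives the explicit formula in~(a).

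For part (b), I would use the formula from (a) together with Proposition~\ref{JN}(a): $x^k \in \overline{Q^n}$ iff $n \le n_k$. The sequence $n_1 \le n_2 \le \cdots \le n_{a-1}$ is nondecreasing, so $n_{a-1}$ is the largest. The key computation is to check directly that $Q \cdot \overline{Q^{n}} = \overline{Q^{n+1}}$ for $n \ge n_{a-1}$, i.e., that $Q^{n+1} + xQ^{n+1-n_1} + \cdots + x^{a-1}Q^{n+1-n_{a-1}} = Q(Q^{n} + xQ^{n-n_1} + \cdots + x^{a-1}Q^{n-n_{a-1}})$; this holds term by term once $n - n_k \ge 0$ for all $k$, which is exactly the condition $n \ge n_{a-1}$. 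Conversely, one shows $\overline{Q^{n_{a-1}}} \ne Q \cdot \overline{Q^{n_{a-1}-1}}$ by exhibiting $x^{a-1} \in \overline{Q^{n_{a-1}}}$ but $x^{a-1} \notin Q\overline{Q^{n_{a-1}-1}}$ (using that $x^{a-1} \notin \overline{Q^{n_{a-1}+1}}$ and degree/regular-sequence considerations on $x$ modulo $Q$, analogous to the argument in the proof of Proposition~\ref{JN}(a)). This gives $\br_Q(\m) = n_{a-1}$. That $\nr_Q(\m) = \br_Q(\m)$ will follow once we know $\overline{G}(\m)$ is Cohen-Macaulay via Proposition~\ref{lemma:nr=r}, or can be read off directly from the fact that $\overline{Q^{n+1}} = Q\overline{Q^n}$ for all $n \ge n_{a-1}$, which says $\nr_Q$ and $\br_Q$ coincide here.

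For part (c), the strategy is to exploit the identification $\overline{G}(\m) = \overline{G}(Q) \cong G(\mathcal{I})$ up to the grading rescaling built into the ideals $I_n$ and $J_n = I_{na'}$. More precisely, since $\overline{\m^n} = \overline{Q^n} = J_n = I_{na'}$, the normal associated graded ring $\overline{G}(\m) = \bigoplus_n \overline{\m^n}/\overline{\m^{n+1}}$ is a Veronese-type piece of $G(\mathcal{I})$, and Lemma~\ref{IN-filt} identifies $G(\mathcal{I}) \cong K[X,\underline{Y}]/(X^a - g_b(\underline{Y}))$, which is a hypersurface, hence Cohen-Macaulay. A Veronese subring (or the relevant graded summand) of a Cohen-Macaulay graded ring over a field is again Cohen-Macaulay, so $\overline{G}(\m)$ is Cohen-Macaulay of dimension $m = \dim A - 1$ (consistent with $\overline{G}(Q)$ having depth $\ge 1$ as noted in the proof of Proposition~\ref{JN}(b)).

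The main obstacle I anticipate is making the passage in (c) between $\overline{G}(\m)$ and $G(\mathcal{I})$ fully rigorous: one must carefully match the filtration $\{\overline{\m^n}\}$ with the rescaled filtration $\{I_{na'}\}$ and verify that the resulting graded ring is genuinely a Veronese-type subring of $G(\mathcal{I})$ (rather than merely isomorphic in each degree), so that Cohen-Macaulayness transfers. A clean way to handle this is to work at the level of the extended Rees algebras $\overline{\mathcal{R}'}(Q)$ and $\mathcal{R}'(\mathcal{I})$, using the explicit presentations $\mathcal{R}'(\mathcal{I}) \cong A[xt^{b'}, \underline{y}t^{a'}, t^{-1}]$ from Lemma~\ref{IN-filt}, and then reduce modulo $t^{-1}$; this should make the Cohen-Macaulay statement a direct consequence of the hypersurface presentation together with the normality of $A$ (which was flagged as needed right before Proposition~\ref{IntClos}).
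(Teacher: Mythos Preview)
Your proposal is correct and follows essentially the same route as the paper. Part~(a) is exactly the paper's argument (combine Proposition~\ref{JN}(b) with Proposition~\ref{IntClos}(c)); part~(b) you spell out more explicitly than the paper, which simply says it ``immediately follows from (a)'', but your term-by-term check and the observation that $x^{a-1}\notin Q$ in $A/Q\cong K[x]/(x^a)$ is the right way to unpack that; and for part~(c), the obstacle you flag is real, and the fix you propose---pass to the extended Rees algebras and use that $\overline{\mathcal{R}'}(\m)=\mathcal{R}'(\{I_{na'}\})$ is the $a'$th Veronese of $\mathcal{R}'(\mathcal{I})$, hence Cohen--Macaulay, then go modulo $t^{-1}$---is precisely what the paper does. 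One small slip: $\dim A = m$, not $m+1$, so $\dim \overline{G}(\m)=m=\dim A$, not $\dim A-1$.
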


\begin{proof}
(a) It follows from Proposition \ref{JN}(b) and Proposition \ref{IntClos}(c). 
\par \vspace{2mm} \par \noindent 
(b) It immediately follows from (a). 
\par \vspace{2mm} \par \noindent 
(c) By Proposition \ref{IntClos}, $G(\mathcal{I})$ is a hypersurface. Therefore it is Cohen-Macaulay. 
Hence $\mathcal{R}'(\mathcal{I})$ is Cohen-Macaulay. 
\par \vspace{2mm}
On the other hand, 
$\overline{\mathcal{R}'}(\m)=\mathcal{R}'(\{J_n\})= 
\mathcal{R}'(\{I_{na'}\})$ by  Propositions \ref{JNvsIN}, \ref{JN}(b) and \ref{IntClos}(c). 
Then $\overline{\mathcal{R}'}(\m)$  is also Cohen-Macaulay since $\overline{\mathcal{R}'}(\m)$ is isomorphic to the $a'$th Veronese 
subring of $\mathcal{R}'(\mathcal{I})$. 
Therefore $\overline{G}(\m)$ is Cohen-Macaulay. 
\end{proof}

\bigskip 
\par  \vspace{2mm}
Next, we give a criterion for $\overline{G}(\m)$ to be 
Gorenstein in the case $A$ is a Zariski-type hypersurface. 
First, we need the following criterion from \cite{OWY7}. 
We put $L_n=Q+\overline{\m^{n}}$ and $\ell_n=\ell(A/L_n)$ 
for every $n \ge 1$. 

\begin{lem}[\textrm{cf. \cite[Proposition 2.5]{OWY7}}] \label{MainLem}
Put $r=r_Q(\m)$. Then the following conditions are equivalent$:$
\begin{enumerate}
\item[\rm{(a)}] $\overline{G}(\m)$ is Gorenstein. 
\item[\rm{(b)}] 
$Q \colon L_n=L_{r+1-n}$ for every $n=1,2,\ldots, r$. 
\item[\rm{(c)}]
$\ell_n+\ell_{r+1-n}=a$ for  every $n=1,2,\ldots,\lceil \frac{r}{2} \rceil$. 
\end{enumerate}
\end{lem}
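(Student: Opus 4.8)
The plan is to transfer the question to the Artinian Gorenstein ring $A/Q$ and then apply two facts already available in the paper: Theorem~\ref{thm:GoriffhVectorCondition} (for a Cohen--Macaulay $G(\FF)$ over a Gorenstein local ring, Gorensteinness is equivalent to symmetry of the $h$-vector) and the Artinian form of the Heinzer--Kim--Ulrich criterion \cite[Theorem~4.2]{HKU} (which is exactly \cite[Proposition~2.5]{OWY7}).

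First I would set up the reduction. Since $A=S/(f)$ is a hypersurface it is Gorenstein, and $\overline{G}(\m)$ is Cohen--Macaulay by Theorem~\ref{S1-Main}(c); moreover $Q=(\underline{y})A$ is a minimal reduction of $\m$ generated by a regular sequence, with $A/Q\cong K[\![x]\!]/(x^{a})$ Artinian Gorenstein of length $a$. Let $z_1^{*},\dots,z_m^{*}$ be the initial forms of a superficial generating sequence of $Q$; since $\overline{G}(\m)$ is Cohen--Macaulay these form a regular sequence on $\overline{G}(\m)$ by \cite[Lemma~2.1]{HucMar}, so $B:=\overline{G}(\m)/(z_1^{*},\dots,z_m^{*})$ is Artinian and $\overline{G}(\m)$ is Gorenstein if and only if $B$ is. The crucial point is the identification $B_n\cong L_n/L_{n+1}$: the degree-$n$ part of the ideal $(z_1^{*},\dots,z_m^{*})$ is the image of $Q\overline{\m^{n-1}}$ in $\overline{\m^{n}}/\overline{\m^{n+1}}$, and using $Q\cap\overline{\m^{n}}=Q\overline{\m^{n-1}}$ (valid by \cite[Proposition~3.5]{HucMar} since $\overline{G}(\m)$ is Cohen--Macaulay) a short diagram chase gives
\[
B_n=\overline{\m^{n}}\big/\big(Q\overline{\m^{n-1}}+\overline{\m^{n+1}}\big)\;\cong\;(Q+\overline{\m^{n}})\big/(Q+\overline{\m^{n+1}})=L_n/L_{n+1}.
\]
Thus $B$ is the associated graded ring of the filtration $\{L_n/Q\}_n$ on $A/Q$; its $h$-vector is $h_n=\ell(L_n/L_{n+1})=\ell_{n+1}-\ell_n$ with $\sum_{n}h_n=\ell(B)=\ell(A/Q)=a$, and its top degree is exactly $r=r_Q(\m)$, because $L_n=Q$ for $n>r$ while $x^{a-1}\in\overline{\m^{r}}\setminus Q$ by Theorem~\ref{S1-Main}(a) (equivalently by \eqref{redandpos}).

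With this dictionary the two equivalences are short. For (a)$\Leftrightarrow$(b): apply \cite[Theorem~4.2]{HKU} (i.e. \cite[Proposition~2.5]{OWY7}) to the Artinian Gorenstein ring $A/Q$ with the filtration $\{L_n/Q\}$ of reduction number $r$; it gives that $B$ is Gorenstein if and only if $(0):_{A/Q}(L_n/Q)=L_{r+1-n}/Q$ for every $n=1,\dots,r$, that is, $Q:_{A}L_n=L_{r+1-n}$, and this matches (a) via the previous paragraph. For (a)$\Leftrightarrow$(c): by Theorem~\ref{thm:GoriffhVectorCondition}, $\overline{G}(\m)$ is Gorenstein if and only if $h_n=h_{r-n}$ for all $n$; since $\ell_n=\sum_{i<n}h_i$ and $\ell_{r+1}=a$, a one-line computation shows that $h_n=h_{r-n}$ for all $n$ is equivalent to $\ell_n+\ell_{r+1-n}=a$ for all $n$, and as the identity for index $n$ and for index $r+1-n$ is the same one, it suffices to impose it for $n=1,\dots,\lceil r/2\rceil$.

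I expect the main obstacle to be precisely the bookkeeping of the first step: checking that $B=\overline{G}(\m)/(z_1^{*},\dots,z_m^{*})$ is genuinely the associated graded ring of $\{L_n/Q\}$ on $A/Q$ (which rests on the Cohen--Macaulayness of $\overline{G}(\m)$ through $Q\cap\overline{\m^{n}}=Q\overline{\m^{n-1}}$) and that its socle degree equals $r=r_Q(\m)$, so that the index $r+1-n$ in conditions (b) and (c) is the correct one. Once this is in place, both equivalences follow from Theorem~\ref{thm:GoriffhVectorCondition} and the Artinian Heinzer--Kim--Ulrich criterion together with elementary length counting.
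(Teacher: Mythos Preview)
The paper does not give its own proof of this lemma; it simply cites \cite[Proposition~2.5]{OWY7}. So there is no in-paper argument to compare against. Your reconstruction is essentially the intended one: reduce modulo a regular sequence of initial forms to an Artinian graded quotient $B$, identify $B$ with the associated graded ring of $\{L_n/Q\}$ on the Artinian Gorenstein ring $A/Q\cong K[\![x]\!]/(x^a)$, and then read off (b) from the Heinzer--Kim--Ulrich colon criterion and (c) from the $h$-vector symmetry in Theorem~\ref{thm:GoriffhVectorCondition} together with $\ell_n=\sum_{i<n}h_i$ and $\ell_{r+1}=a$.

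Two small remarks on your write-up. First, the symbol $r$ in the lemma should be read as $\br_Q(\m)$ (the reduction number of the normal filtration), not the adic $r_Q(\m)$; you implicitly use this when you identify the top degree of $B$ with $r$ via $L_n=Q\Leftrightarrow\overline{\m^{n}}\subset Q\Leftrightarrow\overline{\m^{n}}=Q\overline{\m^{n-1}}$. Second, you appeal to superficiality to get that $y_1^{*},\dots,y_m^{*}$ is $\overline{G}(\m)$-regular; strictly speaking the specific generators $y_1,\dots,y_m$ of $Q$ need not be a superficial sequence, but since $\overline{G}(\m)$ is Cohen--Macaulay (Theorem~\ref{S1-Main}(c)) one has $Q\cap\overline{\m^{n}}=Q\overline{\m^{n-1}}$ for all $n$ by \cite[Proposition~3.5]{HucMar}, and the Valabrega--Valla criterion then gives regularity of the initial forms directly. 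With these points noted, your argument is correct.
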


As an application of Proposition \ref{IntClos}, we obtain the following proposition.

\begin{prop} \label{LNlength}
Under the same notation as above, we have 
\[
\ell_n=\min\{k \in \bbZ_{+} \,|\,  n \le n_k\}.
\] 
\end{prop}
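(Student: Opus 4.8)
The plan is to unwind the definitions and translate the condition $x^k \in L_n$ into an arithmetic statement about the $n_k$. Recall $L_n = Q + \overline{\m^n}$ and, by Theorem \ref{S1-Main}(a), $\overline{\m^n}=\overline{Q^n}=Q^n+xQ^{n-n_1}+\cdots+x^{a-1}Q^{n-n_{a-1}}$. Since $A$ is a hypersurface $A=S/(x^a-g(\underline{y}))$ with $g \in (\underline{y})$, the ring $A/Q = A/(\underline{y})A$ is isomorphic to $K[\![x]\!]/(x^a)$, which has the $K$-basis $1, x, x^2, \ldots, x^{a-1}$. So to compute $\ell_n=\ell(A/L_n)$ it suffices to determine, for each $k$ with $0 \le k \le a-1$, whether the class of $x^k$ lies in $L_n/Q$ inside $A/Q$.

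First I would show $x^k \in L_n$ if and only if $n \le n_k$ (with the convention $n_0=0$, so for $k=0$ this says $n \le 0$, consistent with $L_n \ne A$ for $n \ge 1$). One direction: from the description of $\overline{\m^n}$, the summand $x^kQ^{n-n_k}$ contributes $x^k$ to $L_n$ precisely when $n - n_k \le 0$, i.e. $n \le n_k$ (here $Q^j = A$ for $j \le 0$); and lower powers $x^j$ with $j<k$ appear via $x^jQ^{n-n_j}$, but since $n_j \le n_k$ these need not help — I must be careful. The cleanest route is: $x^k \in L_n$ $\iff$ $x^k \in \overline{Q^n}$ modulo $Q$, and by Proposition \ref{JN}(a), $x^k \in \overline{Q^n} \iff n \le n_k$; one then checks that the only monomial of $\overline{Q^n}$ whose image in $A/Q \cong K[\![x]\!]/(x^a)$ is a unit times $x^k$ comes from the term $x^kQ^{n-n_k}$, using that $y_1,\ldots,y_m$ is a regular sequence and $g \in (\underline{y})^b$ with $b \ge a \ge 1$ so that $Q^j \cap (\text{unit}\cdot x^k + Q) \subseteq Q$ for $j \ge 1$. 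Conversely, if $n \le n_k$ then $x^k \in \overline{Q^n} \subseteq L_n$ directly.

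Granting this equivalence, the monomials $x^k$ with $x^k \notin L_n$ are exactly those with $k$ such that $n > n_k$, i.e. $k < \min\{k' : n \le n_{k'}\}$; writing $k_0 := \min\{k \in \bbZ_+ : n \le n_k\}$ (this set is nonempty because $n_{k}$ is increasing and unbounded as a function allowing $k$ up to $a-1$ gives $n_{a-1}=\br_Q(\m)$, and for $n > \br_Q(\m)$ one has $L_n = \overline{Q^n}$ with $\overline{Q^n}\supseteq Q\overline{Q^{n-1}}$ — I should handle the range $n > n_{a-1}$ separately, where $\ell_n$ stabilizes and the formula still reads correctly since then no $x^k$, $k\le a-1$, lies outside, giving $\ell_n = $ the full count). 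Then $\{1,x,\ldots,x^{k_0-1}\}$ is a $K$-basis of $A/L_n$, so $\ell_n = k_0 = \min\{k \in \bbZ_+ : n \le n_k\}$, as claimed.

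The main obstacle I anticipate is the "only one monomial maps to $x^k$" step: ruling out that $x^k$ could enter $L_n$ through a combination of the summands $x^jQ^{n-n_j}$ for $j \ne k$ together with elements of $Q$. This is really a statement that the images of $Q^{n_k+1}, xQ^{n_k+1-n_1}, \ldots$ in $A/Q$ vanish while $x^kQ^0$ does not — i.e. a careful bookkeeping with the relation $x^a = g(\underline{y}) \in Q^b$ and the regular sequence property, exactly the kind of argument used in the proof of Proposition \ref{JN}(a). Everything else is a direct computation of lengths in $K[\![x]\!]/(x^a)$.
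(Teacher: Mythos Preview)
Your approach is correct and is essentially the same as the paper's, but you are making it harder than necessary. The paper does not test each power $x^k$ individually; it simply computes $L_n$ as an ideal. If $k_0$ is the integer with $n_{k_0-1} < n \le n_{k_0}$, then for $j < k_0$ one has $n - n_j \ge 1$, so the entire summand $x^j Q^{n-n_j}$ lies in $Q$, while for $j \ge k_0$ one has $n - n_j \le 0$, so $x^j Q^{n-n_j} = x^j A \subseteq x^{k_0} A$. Hence $L_n = Q + (x^{k_0})$ on the nose, and $\ell_n = \ell\big(K[x]/(x^{k_0})\big) = k_0$. The obstacle you anticipate --- that $x^k$ might sneak into $L_n$ via a combination of the other summands --- never materializes, because each summand $x^j Q^{n-n_j}$ is already, as an ideal, either contained in $Q$ or equal to $(x^j)$; no finer monomial bookkeeping or appeal to Proposition~\ref{JN}(a) is needed.
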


\begin{proof}
Suppose $n > n_{k-1}$ and $n \le n_k$. 
Then 
\begin{eqnarray*}
L_n & =& Q+\overline{\m^n} \\
&=& Q+(Q^n+xQ^{n-n_1}+\cdots + x^{k-1}Q^{n-n_{k-1}} 
+x^kQ^{n-n_k}+\cdots ) \\
&=& Q+(x^k). 
\end{eqnarray*}
Hence $\ell_n=\ell(K[x]/(x^k))=k$. 
\end{proof}

\par \vspace{2mm}
The following theorem is a main result in this section. 
Recall $d=\gcd(a,b)$. 

\begin{thm} \label{Gor-Main}
$\overline{G}(\m)$ is Gorenstein if and only if $b \equiv 0,d \pmod{a}$. Moreover, 
\begin{enumerate}
\item[\rm{(a)}] If $b \equiv 0 \pmod{a}$, 
then $\overline{G}(\m)$ is a reduced, hypersurface. 
\item[\rm{(b)}] If $b \equiv d \pmod{a}$ and $d=1$,  
then $\overline{G}(\m)$ is a non-reduced, hypersurface. 
\item[\rm{(c)}] If $b \equiv d \pmod{a}$ and $d>1$, 
then $\overline{G}(\m)$ is a complete 
intersection but not a hypersurface. 
\end{enumerate}
\end{thm}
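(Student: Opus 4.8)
The plan is to exploit the explicit description of the normal Rees algebra obtained in the previous results, combined with the numerical criterion of \lemref{MainLem}. By \thmref{S1-Main} we know $\overline{\m^n}=Q^n+xQ^{n-n_1}+\cdots+x^{a-1}Q^{n-n_{a-1}}$, and by \proref{LNlength} we have $\ell_n=\min\{k\in\bbZ_+\mid n\le n_k\}$, where $n_k=\lfloor kb/a\rfloor$. First I would translate condition (c) of \lemref{MainLem}, namely $\ell_n+\ell_{r+1-n}=a$ for all $n=1,\ldots,\lceil r/2\rceil$ with $r=r_Q(\m)=n_{a-1}$, into an arithmetic statement about the sequence $n_1<n_2<\cdots<n_{a-1}$. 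The key observation is that $\ell_n=k$ precisely when $n_{k-1}<n\le n_k$ (with $n_0=0$), so the identity $\ell_n+\ell_{r+1-n}=a$ forces a reflection symmetry: the ``jump set'' $\{n_1,\ldots,n_{a-1}\}$ inside $\{1,\ldots,r\}$ must be invariant under $n\mapsto r+1-n$. I expect that writing $n_k=(kb-s_k)/a$ where $s_k\equiv kb\pmod a$, $0\le s_k<a$, reduces the symmetry condition to $s_k+s_{a-k}\equiv 0$ and a compatibility of the integer parts, and a short computation with $n_k+n_{a-1-k}$ versus $n_{a-1}$ will show this holds exactly when $s_1=b\bmod a$ satisfies $s_1\in\{0,d\}$, i.e. $b\equiv 0$ or $d\pmod a$. (Here one uses $d=\gcd(a,b)$: the residues $s_k$ run through multiples of $d$ modulo $a$, so $s_1=d$ or $s_1=0=a-$(the last residue) are the only self-reflective possibilities.)

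Having established the ``if and only if'', I would then identify the ring structure in each case using \lemref{IN-filt}, which gives $G(\mathcal{I})\cong K[X,\underline Y]/(X^a-g_b(\underline Y))$, together with the fact from the proof of \thmref{S1-Main} that $\overline{\mathcal R'}(\m)$ is the $a'$-th Veronese subring of $\mathcal R'(\mathcal I)$, and hence $\overline G(\m)=\overline{\mathcal R'}(\m)/t^{-1}\overline{\mathcal R'}(\m)$ is a suitable Veronese-type quotient of $K[X,\underline Y]/(X^a-g_b(\underline Y))$. For case (a), $b\equiv 0\pmod a$ means $a\mid b$, so $a'=a/d$ with $d=a$, hence $a'=1$; then $\overline{\mathcal R'}(\m)=\mathcal R'(\mathcal I)$ itself and $\overline G(\m)\cong K[X,\underline Y]/(X^a-g_b(\underline Y))$, which is a reduced hypersurface by \proref{IntClos}(a). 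For case (b), $b\equiv d=1\pmod a$: here $a'=a$, $b'=b$, and the Veronese subring picks out the subalgebra generated by $X^{?}$ and the $Y_i$'s in the appropriate degrees; I would compute generators and show the defining ideal is again principal (one relation of the form $Z^{a}=$ (product/power of the $Y$-variables), non-reduced because the exponent structure forces a nilpotent), giving a non-reduced hypersurface. For case (c), $d>1$: the Veronese subring is generated by $X^{d}$ together with the $Y_i$ (or a minimal subset), and counting generators versus the embedding dimension shows there are exactly two defining relations, producing a complete intersection that is not a hypersurface — one computes $\mu$ of the defining ideal against $\emb(\overline G(\m))$.

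I would organize the writeup as: (1) a lemma computing $\ell_n$ explicitly in closed form from \proref{LNlength} and deriving $\ell_n+\ell_{r+1-n}$, (2) the arithmetic equivalence $\ell_n+\ell_{r+1-n}=a\ \forall n \iff b\equiv 0,d\pmod a$, invoking \lemref{MainLem} to conclude Gorensteinness, and (3) case-by-case identification of the ring via \lemref{IN-filt} and the Veronese description. The main obstacle I anticipate is step (2): verifying the symmetry condition $\ell_n+\ell_{r+1-n}=a$ uniformly in $n$ requires careful handling of the floor functions $n_k=\lfloor kb/a\rfloor$ and their complements $n_{a-1}+1-n_k$ versus $n_{a-1-k}$; the cleanest route is probably to note that $n_k+n_{a-1-k}$ equals either $n_{a-1}$ or $n_{a-1}-1$ depending on whether $s_k+s_{a-1-k}<a$ or $\ge a$, and then to check that the ``off by one'' cases line up with jump points of $\ell$ exactly when $b\bmod a\in\{0,d\}$. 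A secondary subtlety in step (3) is pinning down the minimal generating set of the Veronese subring $\overline{\mathcal R'}(\m)$ in case (c) precisely enough to conclude ``complete intersection, not hypersurface'' rather than merely ``Gorenstein'' — this should follow from the structure of the numerical semigroup generated by $b'$ and $a'$ (coprime), but deserves an explicit argument.
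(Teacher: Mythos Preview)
Your overall strategy is sound but diverges from the paper's in organization, and step (3) as written has a real gap.

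\textbf{Comparison of approaches.} For the biconditional, the paper does \emph{not} verify the full symmetry $\ell_n+\ell_{r+1-n}=a$ in the Gorenstein cases. Instead it proves the ``if'' direction \emph{structurally}: in each of the three cases it computes $\overline{\mathcal{R}'}(\m)$ explicitly by listing generators (using the arithmetic of $n_k$ in that case), reads off a presentation, and mods out by $t^{-1}$ to obtain $\overline{G}(\m)$ as a hypersurface or complete intersection; Gorensteinness is then automatic. Only the ``only if'' direction uses \lemref{MainLem}, and there the paper does not prove a general arithmetic equivalence as you propose --- it simply exhibits, for $b'\equiv\delta\pmod{a'}$ with $2\le\delta\le a'-1$, a single index $t_1=(k_1-1)n_1+1$ (with $k_1=\lceil a'/\delta\rceil$) at which $\ell_{t_1}+\ell_{r+1-t_1}=a+1$. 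Your route (prove the full symmetry $\iff b\equiv 0,d$ directly) would also work, and your observation that $n_k+n_{a-1-k}\in\{n_{a-1},n_{a-1}-1\}$ is the right starting point, but it is more bookkeeping than the paper's single counterexample, and it still leaves (a)--(c) to be established separately.

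\textbf{The gap in step (3).} Your plan to read off the ring structure from the Veronese description is where things go wrong. While $\overline{\mathcal{R}'}(\m)$ is indeed the $a'$-th Veronese of $\mathcal{R}'(\mathcal{I})$, the quotient $\overline{G}(\m)=\overline{\mathcal{R}'}(\m)/(t^{-1})$ is \emph{not} the $a'$-th Veronese of $G(\mathcal{I})=K[X,\underline{Y}]/(X^a-g_b(\underline{Y}))$: under the identification, $t^{-1}$ in $\overline{\mathcal{R}'}(\m)$ corresponds to $t^{-a'}$ in $\mathcal{R}'(\mathcal{I})$, so you are modding out the Veronese by $(t^{-a'})$, not intersecting with $(t^{-1})\mathcal{R}'(\mathcal{I})$. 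For instance, in case (b) with $d=1$ the Veronese of $G(\mathcal{I})$ (graded by $\deg X=b$, $\deg Y_i=a$) is generated over $K$ by the $Y_i$ alone (since $a\mid ib$ forces $a\mid i$, and $X^a=g_b(\underline{Y})$), hence is a polynomial ring --- not the non-reduced hypersurface $K[X,\underline{Y}]/(X^a)$ that $\overline{G}(\m)$ actually is. The paper avoids this by computing $\overline{\mathcal{R}'}(\m)$ directly from its generators $\{x^kt^{n_k}\}$, $\{y_it\}$, $t^{-1}$: in each case the formula for $n_k$ collapses the generating set (to $xt^{n_1}$ alone when $a\mid b$ or $b\equiv 1\pmod a$; to $xt^{n_1}$ and $x^{a'}t^{b'}$ when $1<d<a$ and $b'\equiv 1\pmod{a'}$), and the relations are then immediate. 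This direct computation is both what you need for (a)--(c) and what gives the ``if'' direction for free, which is why the paper front-loads it.
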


\par \vspace{2mm}
In what follows, we prove the above theorem in several steps.
By Theorem \ref{S1-Main}, we have 
\begin{eqnarray*}
\m^n t^n &=& (Q^n+ xQ^{n-n_1} + \cdots + x^{a-1}Q^{n-n_{a-1}})t^n \\[1mm]
&=& (Qt)^n+ (xt^{n_1})(Qt)^{n-n_1} + \cdots 
+ (x^{a-1}t^{n_{a-1}})(QT)^{n-n_{a-1}}. 
\end{eqnarray*}
Thus 
\[
\overline{\mathcal{R}'}(\m) = K[xt,\underline{y}t,\{x^kt^{n_k}\}_{1 \le k \le a-1},t^{-1}]. 
\]
\par 
Using this fact, we first consider the case where $a$ divides $b$.  

\begin{prop} \label{daNGT}
If $a$ divides $b$, then $\overline{G}(\m)$ is a reduced hypersurface, and thus Gorenstein. In fact, 
\[
\overline{G}(\m)  \cong 
K[X, \underline{Y}]/(X^a - g_b(\underline{Y})). 
\]
\end{prop}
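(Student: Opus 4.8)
The plan is to use the setup $n_k = \lfloor kb/a \rfloor$ together with the hypothesis $a \mid b$. Write $b = a c$ with $c \in \bbZ_{+}$; then $n_k = kc$ exactly (no floor needed), and in particular $a' = a/\gcd(a,b) = 1$ since $\gcd(a,b)=a$. First I would substitute $a'=1$ and $b'=b/d = c$ into the description of $\overline{\mathcal{R}'}(\m)$ and of the filtration $\mathcal{I}$. Because $a'=1$, Proposition \ref{JNvsIN} gives $J_n = I_n$ for all $n$, so the filtrations $\{\overline{\m^n}\}$, $\{J_n\}$ and $\{I_n\}$ all agree up to reindexing by $a'=1$, i.e. they coincide. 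Hence $\overline{G}(\m) \cong G(\mathcal{I})$ directly. Then I would invoke Lemma \ref{IN-filt}, which computes $G(\mathcal{I}) \cong K[X,\underline{Y}]/(X^a - g_b(\underline{Y}))$, to get the stated isomorphism.

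The second part — that this ring is a reduced hypersurface (hence Gorenstein) — follows by citing Proposition \ref{IntClos}(a), which already establishes that $G(\mathcal{I})$ is reduced, combined with the obvious fact that $K[X,\underline{Y}]/(X^a - g_b(\underline{Y}))$ is a hypersurface and therefore a complete intersection, hence Gorenstein (see \cite{BH98}). So the key steps, in order, are: (i) observe $a \mid b \Rightarrow d = a$, $a' = 1$, $b' = c$, and $n_k = kc$; (ii) conclude from Proposition \ref{JNvsIN} with $a'=1$ that $\overline{\mathcal{R}'}(\m) = \mathcal{R}'(\mathcal{I})$, so $\overline{G}(\m) \cong G(\mathcal{I})$; (iii) apply Lemma \ref{IN-filt} to identify $G(\mathcal{I})$ with $K[X,\underline{Y}]/(X^a - g_b(\underline{Y}))$; (iv) invoke Proposition \ref{IntClos}(a) for reducedness, and note a hypersurface is Gorenstein.

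I do not expect any serious obstacle here — the proposition is essentially a specialization of the machinery already set up in Propositions \ref{JNvsIN}, \ref{JN}, \ref{IntClos} and Lemma \ref{IN-filt} to the case $a'=1$. The only point requiring a little care is the bookkeeping on the grading: in the general construction the Rees-type algebra $\overline{\mathcal{R}'}(\m)$ is the $a'$th Veronese subring of $\mathcal{R}'(\mathcal{I})$ (as used in the proof of Theorem \ref{S1-Main}(c)), and one must check that when $a'=1$ this Veronese operation is trivial, so that the isomorphism of associated graded rings is the identity-level statement claimed, not merely an isomorphism after passing to a Veronese. This is immediate but worth stating explicitly.
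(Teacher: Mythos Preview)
Your proposal is correct and arrives at the same conclusion by essentially the same ingredients, but the organization differs from the paper's in a way worth noting. The paper does not invoke Lemma \ref{IN-filt} or the Veronese observation here; instead it works directly from the generator description $\overline{\mathcal{R}'}(\m)=K[xt,\underline{y}t,\{x^kt^{n_k}\},t^{-1}]$ established just before Proposition \ref{daNGT}, simplifies this to $K[xt^{n_1},\underline{y}t,t^{-1}]$ using $n_k=kn_1$, writes down the explicit presentation $K[X,\underline{Y},U]/(X^a-g_b(\underline{Y})-\cdots-g_c(\underline{Y})U^{c-b})$, and then mods out by $U=t^{-1}$. Your route---observe $a'=1$, so the $a'$th Veronese is trivial and $\overline{G}(\m)=G(\mathcal{I})$, then cite Lemma \ref{IN-filt} and Proposition \ref{IntClos}(a)---is more economical because it recycles work already done rather than recomputing the presentation. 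The paper's direct computation, on the other hand, makes the structure of $\overline{\mathcal{R}'}(\m)$ itself visible (not just the associated graded), which is the template it reuses in the subsequent propositions for the cases $b\equiv d\pmod a$. One small point: in your step (ii) you cite only Proposition \ref{JNvsIN} for $\overline{\mathcal{R}'}(\m)=\mathcal{R}'(\mathcal{I})$, but you also need $\overline{\m^n}=J_n$, which requires Proposition \ref{JN}(b) together with Proposition \ref{IntClos}(c) (or simply Theorem \ref{S1-Main}(a)); you clearly know this from your final paragraph, but it should be stated at that step.
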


\begin{proof}
As $b=b'a$ for some integer $b' \ge 1$, we have $n_k=\lfloor kb' \rfloor =kb'$ for every $k=1,2,\ldots,a-1$.  
Hence one can easily see that 
\begin{eqnarray*}
\overline{\mathcal{R}'}(\m) 
&=& K[xt, \underline{y}t, \{x^kt^{n_k}\}_{1 \le k \le a-1}, t^{-1}] \\
&=& K[xt^{n_1}, \underline{y}t, \,t^{-1}] \\
& \cong &  K[X, \underline{Y},U]/
(X^a-g_b(\underline{Y})-\cdots-g_c(\underline{Y})U^{c-b}),  
\end{eqnarray*}
where $X=xt^{n_1}$, $Y_i=y_it$ $(i=1,2,\ldots,m)$, and $U=t^{-1}$. 
\par \vspace{2mm} 
Moreover, the claims on $G(\m)$ 
immediately follow from the fact that 
\[
\overline{G}(\m) \cong \overline{\mathcal{R}'}(\m)/t^{-1}
\overline{\mathcal{R}'}(\m).
\] 
\end{proof}

Secondly we consider the case of $d=1$ and $b \equiv 1 \pmod{a}$. 

\begin{prop} \label{dequiv1PC}
If $d=1$ and $b=n_1a+1$, then 
$\overline{G}(\m)$ is a non-reduced hypersurface, and thus Gorenstein. 
In fact, 
\[
\overline{G}(\m) 
 \cong  K[X,\underline{Y}]/(X^a) \cong G(\m).  
\]
\end{prop}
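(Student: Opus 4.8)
The plan is to compute $\overline{\mathcal{R}'}(\m)$ explicitly using the description
\[
\overline{\mathcal{R}'}(\m) = K[xt,\underline{y}t,\{x^kt^{n_k}\}_{1 \le k \le a-1},t^{-1}]
\]
established just before Proposition \ref{daNGT}, and then pass to the quotient by $t^{-1}$. The first step is to understand the exponents $n_k=\lfloor kb/a \rfloor$ under the hypothesis $b=n_1 a+1$. Writing $b = n_1 a + 1$ one gets $kb = kn_1 a + k$, so $n_k = \lfloor kb/a \rfloor = kn_1 + \lfloor k/a \rfloor = kn_1$ for all $k=1,\ldots,a-1$ (since $0 < k < a$). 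Thus each generator $x^k t^{n_k} = x^k t^{kn_1} = (xt^{n_1})^k$ is already a power of the single element $xt^{n_1}$, so these extra generators are redundant.

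Next I would identify the resulting ring. Setting $X = xt^{n_1}$, $Y_i = y_i t$, and $U = t^{-1}$, we get $\overline{\mathcal{R}'}(\m) = K[X,\underline{Y},U]$ modulo the relation coming from $f = x^a - g(\underline{y})$. Computing as in the proof of Lemma \ref{IN-filt}: $X^a = x^a t^{an_1}$ while $g(\underline{y})t^{bn_1a/\ldots}$ — more precisely, using $b = n_1 a + 1$, the relation $x^a = g(\underline y)$ yields
\[
X^a = x^a t^{an_1} = g(\underline{y}) t^{an_1} = g(\underline{y}) t^{b-1} = \big(g_b(\underline Y) + g_{b+1}(\underline Y)U + \cdots\big)U^{b-1-b}\cdot(\text{bookkeeping})
\]
and the key point is that $an_1 = b-1 < b \le \deg$ of every term of $g$, so after writing each monomial of $g(\underline y)t^{an_1}$ in terms of $Y_i = y_i t$ and $U = t^{-1}$, every term carries a strictly positive power of $U$. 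Hence $X^a \in (U)$, so modulo $U = t^{-1}$ we obtain $\overline{G}(\m) \cong \overline{\mathcal{R}'}(\m)/(t^{-1}) \cong K[X,\underline Y]/(X^a)$. Finally, $K[X,\underline Y]/(X^a) \cong G(\m)$ since $G(\m)$ is a hypersurface of multiplicity $\ord(f) = a$ (here $a \le b$ forces $\ord(f) = a$), and $X^a$ has the right form; this is a reduced-degree comparison. The ring $K[X,\underline Y]/(X^a)$ is a hypersurface, non-reduced because $X$ is nilpotent, and Gorenstein as every hypersurface is.

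The main obstacle I anticipate is the careful bookkeeping of the $U$-exponents in $X^a = g(\underline y)t^{an_1}$: one must verify that $an_1 = b-1$ is strictly less than the degree of each homogeneous component $g_i$ of $g$ (which holds since $i \ge b > b-1$), so that rewriting $y_1^{j_1}\cdots y_m^{j_m} t^{an_1}$ with $j_1+\cdots+j_m = i \ge b$ as $Y_1^{j_1}\cdots Y_m^{j_m} t^{an_1 - i} = Y_1^{j_1}\cdots Y_m^{j_m} U^{\,i - an_1}$ indeed produces a positive exponent $i - an_1 \ge b - (b-1) = 1$ on $U$. Everything else is a direct translation of the computation in Lemma \ref{IN-filt} and Proposition \ref{daNGT}.
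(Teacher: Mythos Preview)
Your proposal is correct and follows essentially the same route as the paper: compute $n_k=kn_1$ from $b=n_1a+1$, reduce the generating set of $\overline{\mathcal{R}'}(\m)$ to $X=xt^{n_1}$, $Y_i=y_it$, $U=t^{-1}$, identify the single relation $X^a=g_b(\underline{Y})U+\cdots+g_c(\underline{Y})U^{c-b+1}$, and kill $U$.

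One small gap to close: after observing that each $x^kt^{n_k}=(xt^{n_1})^k$, you still have the generator $xt$ sitting in the list $K[xt,\underline{y}t,\{x^kt^{n_k}\},t^{-1}]$, and you jump directly to $\overline{\mathcal{R}'}(\m)=K[X,\underline{Y},U]$ without disposing of it. The paper handles this with the one-line remark $xt=(xt^{n_1})(t^{-1})^{n_1-1}=XU^{n_1-1}$ (valid since $b=n_1a+1\ge a+1$ forces $n_1\ge 1$). You should insert this; otherwise the presentation of $\overline{\mathcal{R}'}(\m)$ as a quotient of $K[X,\underline{Y},U]$ is not yet justified. Your bookkeeping on the $U$-exponents (the ``main obstacle'' paragraph) is exactly right and matches the paper's computation $X^a=x^at^{b-1}=g_b(\underline{Y})U+\cdots+g_c(\underline{Y})U^{c-b+1}$.
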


\begin{proof}
For every $k$ with $1 \le k \le a-1$, we have 
\[
n_k
=\lfloor \frac{kb}{a} \rfloor 
= \lfloor \frac{k(n_1a+1)}{a} \rfloor
=kn_1+ \lfloor \dfrac{k}{a} \rfloor = kn_1. 
\]
Hence 
\begin{eqnarray*}
\overline{\mathcal{R}'}(\m) &=& 
 K[xt, \underline{y}t,\{x^kt^{n_k}\}_{1 \le k \le a-1}, t^{-1}] \\
&=&  K[xt, \underline{y}t, xt^{n_1},t^{-1}] \\
&\cong & K[X,\underline{Y},U]/(X^a-g_b(\underline{Y})U-
\cdots -g_c(\underline{Y})U^{c-b+1}). 
\end{eqnarray*}
Indeed, put  $X = xt^{n_1}, Y_i = y_it$ for $i = 1, 2, \ldots, m$  and $U = t^{-1}$. Then $xt=(xt^{n_1})(t^{-1})^{n_1-1}$ and 
\begin{eqnarray*}
X^a=(xt^{n_1})^a &=& (g_b+g_{b+1}+\cdots+g_c)t^{an_1} \\
&=& (g_b+g_{b+1}+\cdots+g_c)t^{b-1} \\
&=& (g_bt^b)U+g_{b+1}t^{b+1}U^2+\cdots+g_ct^cU^{c-b+1} \\
&=& g_b(\underline{Y})U+\cdots +g_c(\underline{Y})U^{c-b+1}. 
\end{eqnarray*}
The other assertion follows from this. 
\end{proof}

\par \vspace{2mm}
Thirdly, we consider the case $1 <  d < a$, $b \equiv d \pmod{a}$, that is, $b' \equiv 1 \pmod{a'}$. 
\begin{prop} 
If $1<d<a$ and $b'=n_1a'+1$, then  
$\overline{G}(\m)$ is a complete intersection, and thus Gorenstein. 
In fact, 
\[
\overline{G}(m) \cong K[X,\underline{Y},Z]/(X^{a'}, Z^d-g_b(\underline{Y})).
\] 
\end{prop}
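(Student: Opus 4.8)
The plan is to follow exactly the same template as the two preceding propositions (the case $b \equiv 0$ and the case $d=1$, $b \equiv 1$), but carrying the Veronese structure through. First I would use the formula
\[
\overline{\mathcal{R}'}(\m) = K[xt,\underline{y}t,\{x^kt^{n_k}\}_{1 \le k \le a-1},t^{-1}]
\]
established just before Proposition~\ref{daNGT} via Theorem~\ref{S1-Main}. The first step is a purely arithmetic computation of $n_k=\lfloor kb/a\rfloor$ under the hypothesis $b' = n_1 a' + 1$ (equivalently $b \equiv d \pmod a$, $1<d<a$). Writing $k = qd + s$ with $0 \le s < d$, I expect to get $n_k = q b + n_s$ for the "bulk" part plus a correction; more precisely, since $n_d = \lfloor db/a\rfloor = \lfloor b/a'\rfloor = b' = n_1 a' + 1$ while $a' n_1 = n_{a'd}/\,$... — the point is that the generators $x^k t^{n_k}$ for $k$ not a multiple of $d$ are redundant (they lie in the subring generated by $xt^{n_1}$ and $\underline yt$), whereas $x^d t^{n_d}$ gives a genuinely new generator because $n_d = n_1 a' + 1$ is one more than $a' n_1$. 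This is the crux of why we get a complete intersection rather than a hypersurface.

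Next I would introduce the generators $X = x t^{n_1}$, $Z = x^d t^{n_d}$, $Y_i = y_i t$, $U = t^{-1}$ and identify the relations. From $X^{a'} = x^{a'} t^{a' n_1} = x^{a'} t^{b' - 1}$ and $Z = x^d t^{n_d} = x^d t^{b'}$, and using $x^a = g(\underline y)$, I would derive two relations: one of the shape $Z^{d'} = (\text{something involving } X)$ coming from $Z^{?} \sim x^{a} \sim g$, and $X^{a'} = Z \cdot U^{?}$ coming from comparing exponents of $t$. Concretely $Z^{a'/\gcd} $ vs $x^a t^{a b'} = g(\underline y) t^{a b'}$, which upon dividing by the appropriate power of $U$ becomes $g_b(\underline Y)$ modulo $U$; and $X^{a'}\cdot (xt)^{\text{corr}}$ relates to $Z$ up to a power of $U$. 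After passing to the quotient by $t^{-1}\overline{\mathcal{R}'}(\m)$, i.e. setting $U=0$, the $U$-terms die and I should land on
\[
\overline{G}(\m) \cong K[X,\underline Y,Z]/(X^{a'},\, Z^{d}-g_b(\underline Y)),
\]
which visibly is a complete intersection (two relations in $m+2$ variables cutting out dimension $m$), hence Gorenstein.

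The main obstacle I anticipate is the bookkeeping in the second step: showing that the presentation of $\overline{\mathcal{R}'}(\m)$ as a $K$-algebra is \emph{exactly} $K[X,\underline Y, Z, U]/(X^{a'} - \text{(stuff)},\, Z^{?} - \text{(stuff)})$ with no hidden extra relations — i.e. that the obvious surjection from the polynomial ring is injective in the right degrees. The clean way to see this is dimension/Hilbert-series counting: $\overline{\mathcal{R}'}(\m)$ is the $a'$-th Veronese of $\mathcal{R}'(\mathcal{I}) \cong A[xt^{b'},\underline yt^{a'},t^{-1}]$ (by the proof of Theorem~\ref{S1-Main}(c)), and $\mathcal{R}'(\mathcal I)$ is itself a hypersurface by Lemma~\ref{IN-filt}; a Veronese of a graded hypersurface in this specific situation is a complete intersection, and one can match the two proposed relations against the Veronese structure degree by degree. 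Alternatively, and perhaps more directly, one checks the relations cut out the correct object by verifying $\overline{G}(\m)$ has the right multiplicity $e_0(\overline G) = a$ (from Theorem~\ref{S1-Main} / the intro theorem) and the right dimension $m$, and that $X^{a'}$, $Z^d - g_b(\underline Y)$ form a regular sequence on $K[X,\underline Y,Z]$ — the latter because $X^{a'}$ and $g_b(\underline Y)$ involve disjoint variables, so $X^{a'}$ is a nonzerodivisor and modulo it $Z^d - g_b(\underline Y)$ is a nonzerodivisor on $K[\underline Y,Z]$. Once the isomorphism is pinned down, "complete intersection $\Rightarrow$ Gorenstein" is immediate, and "not a hypersurface" follows since $1<d<a$ forces both relations to be nontrivial and the embedding dimension to exceed $\dim \overline G + 1$.
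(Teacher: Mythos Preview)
Your overall strategy matches the paper's exactly: compute $n_k$ explicitly, reduce the generating set of $\overline{\mathcal{R}'}(\m)$ to a short list, write down the relations among those generators, and pass to $\overline{G}(\m)$ by setting $U=t^{-1}$ to zero.

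However, you have systematically swapped the roles of $d$ and $a'$ in the arithmetic, and this breaks the intermediate steps. The correct extra generator is $Z = x^{a'}t^{n_{a'}} = x^{a'}t^{b'}$, \emph{not} $x^d t^{n_d}$. Indeed $n_{a'} = \lfloor a'b/a\rfloor = \lfloor b'\rfloor = b'$ on the nose, whereas $n_d = \lfloor db/a\rfloor = \lfloor b'd/a'\rfloor$, which equals $b'$ only when $d=a'$. With the correct $Z$ one gets $X^{a'} = x^{a'}t^{a'n_1} = x^{a'}t^{b'-1} = ZU$ and $Z^d = x^{a}t^{b} = g_b(\underline{Y}) + g_{b+1}(\underline{Y})U^{a'} + \cdots$, which are precisely the two relations; with your $Z=x^dt^{n_d}$ neither identity holds unless $d=a'$. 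Correspondingly, the decomposition of $k$ that makes the redundancy of the remaining $x^kt^{n_k}$ transparent is $k = pa' + s$ with $0\le s < a'$ (the paper shows $n_k = pb' + sn_1$, so $x^kt^{n_k}=(x^{a'}t^{b'})^p(xt^{n_1})^s$), not $k = qd+s$.

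Once this swap is corrected your outline \emph{is} the paper's proof. Your worry about hidden extra relations is fair: the paper simply asserts the presentation of $\overline{\mathcal{R}'}(\m)$ without further argument, and your suggested fix --- matching dimension and multiplicity, or observing directly that $X^{a'}$ and $Z^d - g_b(\underline{Y})$ form a regular sequence on $K[X,\underline{Y},Z]$ since they involve disjoint sets of variables --- is a clean way to close that point.
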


\begin{proof}
Note that $b'\equiv 1 \pmod{a'}$. 
One can easily see that $n_k=kn_1$ for $1 \le k \le a'-1$, 
and $n_{a'}=b'$. 
For any integer $p$ $(1\le p \le d-1)$, if $pa'+1 \le k \le pa'+a'-1$, 
then 
\[
n_{k} =\lfloor \frac{k(n_1a'+1)}{a'} \rfloor 
= kn_1+ \lfloor \frac{k}{a'} \rfloor 
= kn_1+p = pb'+(k-pa')n_1. 
\]
Hence $(k,n_k)=p(a',b')+(k-pa')(1,n_1)$. 
Similarly, if $k=pa'$ $(2 \le p \le d-1)$, then $(k,n_k)=p(a',b')$. 
Therefore
\begin{eqnarray*}
\overline{\mathcal{R}'}(\m) 
&=& K[xt,y_1t,\ldots,y_mt,xt^{n_1},x^{a'}t^{b'},t^{-1}] \\[1mm]
& \cong & K[X,\underline{Y},Z,U]/(X^{a'}-ZU, Z^d-g_b({\underline{Y}})- \cdots -
g_c({\underline{Y}})U^{c-b}), 
\end{eqnarray*} 
where $X=xt^{n_1}$, $Y_i=y_it$, $Z=x^{a'}t^{b'}$, and $U=t^{-1}$. 
Indeed, 
\[
X^{a'}=x^{a'}t^{n_1a'}=(x^{a'}t^{b'})(t^{-1})^{b'-n_1a'}=(x^{a'}t^{b'})(t^{-1})=ZU.
\]
and 
\begin{eqnarray*}
Z^d &=&(x^{a'}t^{b'})^d \\
&=& x^at^b \\
&=&
g_b(y_1,\ldots,y_m)t^b+\cdots+g_c(y_1,\ldots,y_m)t^c \cdot t^{b-c}  \\
&=& g_b(\underline{Y})+\cdots + g_c(\underline{Y})U^{c-b}. 
\end{eqnarray*}
The other assertion follows from this. 
\end{proof}

\par \vspace{2mm}
In the rest of this section, we consider the case of $1 < d < a$ 
and $b' \equiv \delta \pmod{a'}$ for some $\delta$ 
$(2 \le \delta \le a'-1)$. 
Note that 
\[
\br(\m) =n_{a-1}=\lfloor \dfrac{(a-1)b'}{a'} \rfloor = (a-1)n_1+d\delta-1. 
\]
In order to prove Theorem \ref{Gor-Main}, it suffices to prove the 
following proposition. 

\begin{prop}
In the above notation, if we put $k_1=\lceil \frac{a'}{\delta} \rceil$ and 
$t_1=(k_1-1)n_1+1$, then 
\[
\ell_{t_1}+\ell_{r+1-t_1} =a+1 (\ne a). 
\]
In particular, $\overline{G}(\m)$ is \emph{not\/} Gorenstein. 
\end{prop}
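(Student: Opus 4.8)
The plan is to compute the two lengths $\ell_{t_1}$ and $\ell_{r+1-t_1}$ explicitly via Proposition~\ref{LNlength}, and then to invoke the Gorenstein criterion Lemma~\ref{MainLem}(c). Throughout we are in the case $1<d<a$, $2\le\delta\le a'-1$, so $a'\ge 3$ and $\gcd(a',\delta)=\gcd(a',b')=1$; in particular $\delta$ does \emph{not} divide $a'$. Writing $b'=n_1a'+\delta$ with $n_1=\lfloor b'/a'\rfloor\ge 1$, one has for all $k\ge 1$
\[
n_k=\left\lfloor\frac{k(n_1a'+\delta)}{a'}\right\rfloor=kn_1+\left\lfloor\frac{k\delta}{a'}\right\rfloor ,
\]
the sequence $(n_k)_k$ is strictly increasing, and $\ell_n$ is by Proposition~\ref{LNlength} the unique $k$ with $n_{k-1}<n\le n_k$ (convention $n_0=0$); recall $r=\br(\m)=n_{a-1}=(a-1)n_1+d\delta-1$.

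First I would compute $\ell_{t_1}$. Since $k_1=\lceil a'/\delta\rceil$ satisfies $(k_1-1)\delta<a'\le k_1\delta$, we get $\lfloor(k_1-1)\delta/a'\rfloor=0$, hence $n_{k_1-1}=(k_1-1)n_1<t_1$, while $\lfloor k_1\delta/a'\rfloor\ge 1$ gives $n_{k_1}\ge k_1n_1+1=t_1+n_1>t_1$; thus $\ell_{t_1}=k_1$. Next, put $M:=r+1-t_1=(a-k_1)n_1+d\delta-1$. Because $\delta\nmid a'$, the inequality $a'\le k_1\delta$ is \emph{strict}, and together with $k_1\delta<a'+\delta<2a'$ this yields $1<k_1\delta/a'<2$; using $a=da'$ one then gets
\[
\left\lfloor\frac{(a-k_1)\delta}{a'}\right\rfloor=\left\lfloor d\delta-\frac{k_1\delta}{a'}\right\rfloor=d\delta-2,
\qquad
\left\lfloor\frac{(a-k_1+1)\delta}{a'}\right\rfloor=\left\lfloor d\delta-\frac{(k_1-1)\delta}{a'}\right\rfloor=d\delta-1,
\]
the latter from $0<(k_1-1)\delta<a'$. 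Hence $n_{a-k_1}=(a-k_1)n_1+d\delta-2=M-1<M$ and $n_{a-k_1+1}=M+n_1>M$, so $\ell_{r+1-t_1}=a-k_1+1$.

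Adding, $\ell_{t_1}+\ell_{r+1-t_1}=k_1+(a-k_1+1)=a+1\ne a$. One checks easily that $2\le t_1<r/2$ (indeed $2k_1\le a'+2\le a$ forces $2k_1-a-1<0$, while $d\delta\ge 4$), so $t_1\in\{1,\dots,\lceil r/2\rceil\}$ and condition (c) of Lemma~\ref{MainLem} fails at $n=t_1$; therefore $\overline{G}(\m)$ is not Gorenstein, which — together with Propositions~\ref{daNGT} and~\ref{dequiv1PC} and the complete-intersection case treated above — finishes the proof of Theorem~\ref{Gor-Main}. The hard part is the second length computation: obtaining $a-k_1+1$ rather than $a-k_1$ (the latter would produce the symmetric value $a$) rests precisely on the strict inequality $a'<k_1\delta$, i.e.\ on $\delta\nmid a'$, so this is exactly where the arithmetic hypothesis $b'\equiv\delta\pmod{a'}$ with $2\le\delta\le a'-1$ and $\gcd(a',b')=1$ enters in an essential way; the rest is routine manipulation of floor functions.
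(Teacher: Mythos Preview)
Your proof is correct and follows essentially the same route as the paper's: both compute $n_{k_1-1},n_{k_1},n_{a-k_1},n_{a-k_1+1}$ via the identity $n_k=kn_1+\lfloor k\delta/a'\rfloor$, read off $\ell_{t_1}=k_1$ and $\ell_{r+1-t_1}=a-k_1+1$ from Proposition~\ref{LNlength}, and conclude via Lemma~\ref{MainLem}(c). The paper organizes the floor/ceiling computations into five separate claims, whereas you do them inline; the key observation in both is that $1<k_1\delta/a'<2$, which hinges on $\gcd(a',\delta)=1$. Your extra verification that $t_1\le\lceil r/2\rceil$ is not in the paper (it is implicitly unnecessary by the $n\leftrightarrow r+1-n$ symmetry of condition~(c)), but it does no harm.
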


\begin{proof}
We first prove the following claim. 
\par \vspace{2mm} \par \noindent 
{\bf Claim 1.} $\lfloor \frac{k\delta}{a'}\rfloor=0$, 
$\lceil \frac{k\delta}{a'}\rceil=1$  for each $k$ $(1 \le k \le k_1-1)$. 
\par \vspace{2mm}
By definition, $k \le k_1-1< \frac{a'}{\delta}$. 
Thus $0 < \frac{k\delta}{a'} < 1$. 
Hence $\lfloor \frac{k\delta}{a'}\rfloor=0$ and 
$\lceil \frac{k\delta}{a'}\rceil=1$. 
\par \vspace{2mm} \par \noindent 
{\bf Claim 2.} $\lfloor \frac{k_1\delta}{a'}\rfloor=1$, 
$\lceil \frac{k_1\delta}{a'}\rceil=2$. 
\par \vspace{2mm}
By definition, $k_1-1 < \frac{a'}{\delta} \le k_1$. 
Since $(a',\delta)=1$, $\frac{a'}{\delta}$ is not an integer. 
Hence $\frac{a'}{\delta}< k_1$, that is, $\frac{k_1\delta}{a'} > 1$. 
If $\frac{k_1\delta}{a'} \ge 2$, then one has 
\[
\frac{2a'}{\delta}-1 \le k_1-1 < \frac{a'}{\delta}.  
\]
This yields $a' < \delta$, which is a contradiction. 
Hence $1 < \frac{k_1\delta}{a'}< 2$. 
The claim follows from here. 
\par \vspace{2mm} \par \noindent 
{\bf Claim 3.} $n_k=kn_1$ for every $k$ $(1 \le k \le k_1-1)$ and 
$n_{k_1}=k_1n_1+1$. 
\par \vspace{2mm} 
Indeed, for every $k$ $(1 \le k \le k_1-1)$, Claim 1 implies 
\[
n_k= \lfloor \frac{kb'}{a'} \rfloor = \lfloor \frac{k(n_1a'+\delta)}{a'}\rfloor
=kn_1+\lfloor \frac{k\delta}{a'}\rfloor=kn_1.
\]
On the other hand, Claim 2 implies 
\[
n_{k_1}=\lfloor \frac{k_1b'}{a'}\rfloor
=\lfloor \frac{k_1(n_1a'+\delta)}{a'}\rfloor
=k_1n_1+ \lfloor \frac{k_1\delta}{a'}\rfloor
=k_1n_1+1. 
\]
\par \vspace{2mm} \par \noindent 
{\bf Claim 4.} $n_{a-k}=(a-k)n_1+d\delta-1$ 
for each $k$ $(1 \le k \le k_1-1)$ and 
$n_{a-k_1}=(a-k_1)n_1+d\delta-2$. 
\par \vspace{2mm}
For every $k$ $(1 \le k \le k_1-1)$, Claim 1 implies 
\[
n_{a-k} 
= \text{\small 
$= \lfloor \frac{(a-k)(n_1a'+\delta)}{a'} \rfloor$ } 
= (a-k)n_1 + d \delta - \lceil \frac{k\delta}{a'} \rceil 
= (a-k)n_1 + d \delta-1. 
\]
On the other hand, Claim 2 implies 
\[
n_{a-k_1} 
=  \text{\small 
$= \lfloor \frac{(a-k_1)(n_1a'+\delta)}{a'} \rfloor $ }
= (a-k_1)n_1 + d \delta - \lceil \frac{k_1\delta}{a'} \rceil 
= (a-k_1)n_1 + d \delta-2. 
\]
\par \vspace{2mm} \par \noindent 
{\bf Claim 5.} $\ell_{t_1}+\ell_{r+1-t_1}=a+1$. 
 \par \vspace{2mm}
By Claim 3, $n_{k_1-1}+1=t_1 \le n_{k_1}$. 
Hence $\ell_{t_1}=k_1$ by Proposition \ref{LNlength}. 
By Claim 4, we have 
\begin{eqnarray*}
n_{a-k_1}+1 &=& (a-k_1)n_1+d\delta-1 = r+1-t_1 \\[2mm]
n_{a-k_1+1} &=& (a-k_1+1)n_1+d\delta-1 > r+1-t_1. 
\end{eqnarray*}
Hence $\ell_{r+1-t_1}=a-k_1+1$ and thus 
$\ell_{t_1}+\ell_{r+1-t_1}=(a-k_1+1)+k_1=a+1$.  
\end{proof}

\par 
Using Theorem \ref{Gor-Main}, we can determine all Zariski-type hypersurfaces having maximal relative normal reduction number.  

\begin{prop} \label{Hyp_Maximal}
Let $A=S/(x^a-g(\underline{y}))$ be a Zariski-type hypersurface with 
$b=\ord_{(\underline{y})}(g)$. 
Then 
\begin{enumerate}
\item[(a)] $\ol{r}(\m)=1$ if and only if $(a,b)=(2,2)$, $(2,3)$. 
\item[(b)] $\ol{r}(\m)=2$ if and only if $(a,b)=(2,4),(2,5),(3,3),(3,4)$. 
\item[(c)] $\nr(\m)$ is maximal if and only if $(a,b)=(2,n),(3,3),(3,4),(3,5)$, where $n \ge 2$ is any integer. 
\end{enumerate}
\end{prop}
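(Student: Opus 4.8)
The plan is to read off everything from Theorem~\ref{S1-Main} and Proposition~\ref{LNlength}, so that each of (a), (b), (c) reduces to an elementary estimate on the integers $n_k=\lfloor kb/a\rfloor$ under the standing hypothesis $2\le a\le b$. Throughout I will use that $\overline{G}(\m)$ is Cohen--Macaulay (Theorem~\ref{S1-Main}(c)), hence $\nr(\m)=\ol{r}(\m)$ by Proposition~\ref{lemma:nr=r}, and that $\ol{r}(\m)=n_{a-1}=\lfloor (a-1)b/a\rfloor$ by Theorem~\ref{S1-Main}(b); note that $a\le b$ forces $n_{a-1}\ge a-1\ge 1$.

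For (a) I would solve $n_{a-1}=1$, i.e.\ $a\le (a-1)b<2a$. Together with $b\ge a$, this gives $b\in\{2,3\}$ when $a=2$, while for $a\ge 3$ one gets $b<2a/(a-1)\le 3\le a$, which is impossible; hence $(a,b)=(2,2),(2,3)$. Part (b) is the identical computation with $n_{a-1}=2$, i.e.\ $2a\le (a-1)b<3a$: with $b\ge a$ this yields $b\in\{4,5\}$ for $a=2$, $b\in\{3,4\}$ for $a=3$, and nothing for $a\ge 4$ (since then $b<3a/(a-1)\le 4\le a$). So $(a,b)=(2,4),(2,5),(3,3),(3,4)$.

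For (c) I would apply the maximality criterion of Theorem~\ref{thm:nr(F)=r(F)}(b): since $\nr(\m)=\ol{r}(\m)$ holds automatically, $\nr(\m)$ is maximal if and only if $\ell(\overline{\m^{n}}/Q\overline{\m^{n-1}})=1$ for every $n$ with $2\le n\le n_{a-1}$. Because $\overline{G}(\m)$ is Cohen--Macaulay we also have $Q\cap\overline{\m^{n}}=Q\overline{\m^{n-1}}$, so $\overline{\m^{n}}/Q\overline{\m^{n-1}}\cong L_n/Q$, and using $\ell(A/Q)=a$ (because $A/Q\cong K[[x]]/(x^a)$) and Proposition~\ref{LNlength} we get $\ell(\overline{\m^{n}}/Q\overline{\m^{n-1}})=a-\ell_n$ with $\ell_n=\min\{k\in\bbZ_{+}\mid n\le n_k\}$. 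Now $\ell_n$ is non-decreasing in $n$ and $\ell_n\le a-1$ for every $n\le n_{a-1}$; hence the condition ``$\ell_n=a-1$ for all $2\le n\le n_{a-1}$'' holds precisely when either $n_{a-1}\le 1$ (so that it is vacuous, consistent with $\ol{r}(\m)=1$ already forcing maximality) or $\ell_2=a-1$, and in both cases this is equivalent to the single inequality $n_{a-2}=\lfloor (a-2)b/a\rfloor\le 1$, i.e.\ $(a-2)b<2a$. Solving $(a-2)b<2a$ under $2\le a\le b$: it is automatic for $a=2$, giving all $(2,n)$ with $n\ge 2$; for $a=3$ it reads $b<6$, hence $b\in\{3,4,5\}$; and for $a\ge 4$ one has $(a-2)b\ge (a-2)a\ge 2a$, so there is no solution. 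This gives exactly $(a,b)=(2,n)$ with $n\ge 2$ and $(3,3),(3,4),(3,5)$, as claimed.

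I expect the only genuinely delicate points to lie in (c): first, collapsing the a priori unbounded family of length conditions down to the single inequality $n_{a-2}\le 1$, which rests on the monotonicity of $\ell_n$ together with $\ell_n\le a-1$ for $n\le n_{a-1}$; and second, the boundary case $n_{a-1}=1$, where the criterion of Theorem~\ref{thm:nr(F)=r(F)}(b) is vacuously satisfied and one must separately observe that $\ol{r}(\m)=1$ makes $\nr(\m)$ maximal. All the remaining work is routine floor-function bookkeeping.
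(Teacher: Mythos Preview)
Your proposal is correct and follows essentially the same route as the paper: parts (a) and (b) are identical floor computations from $\ol r(\m)=\lfloor (a-1)b/a\rfloor$, and for (c) both you and the paper reduce to the length condition $\ell(\overline{\m^{n}}/Q\overline{\m^{n-1}})=1$ for $2\le n\le r$ via Theorem~\ref{thm:nr(F)=r(F)}(b).

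The one minor difference is in how this length is computed. The paper asserts directly that $\ell(\overline{\m^{n}}/Q\overline{\m^{n-1}})=\max\{a-\lceil an/b\rceil,0\}$ and then solves $\lceil 2a/b\rceil=a-1$; you instead pass through $L_n/Q$ and Proposition~\ref{LNlength} to get the same quantity as $a-\ell_n$, and reduce to $n_{a-2}\le 1$. These are the same condition (since $\ell_n=\lceil an/b\rceil$), but your derivation is a bit more self-contained, and your single inequality $n_{a-2}\le 1$ absorbs the boundary case $r=1$ cleanly rather than treating it separately as the paper does.
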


\begin{proof}
Note that $\ol{r}(\m)=\lfloor \frac{(a-1)b}{a} \rfloor \ge a-1$ by Theorem \ref{Gor-Main}. 
\par \vspace{1mm} \par \noindent 
(a) $\ol{r}(\m) =1$ implies $a=2$ by the above remark. 
Thus one can easily see that  $\ol{r}(\m)=1$  if $(a,b)=(2,2)$, $(2,3)$. 
\par \vspace{1mm} \par \noindent 
(b) $\ol{r}(\m) =2$ implies $a=2,3$ by the above remark. 
When $a=2$, $\ol{r}(\m) =\lfloor \frac{b}{2} \rfloor =2$ if and only if $b=4,5$. 
When $a=3$, $\ol{r}(\m) =\lfloor \frac{2b}{3} \rfloor =2$ if and only if $b=3,4$. 
\par \vspace{1mm} \par \noindent 
(c) Suppose $r=\nr(m)$ is maximal. We may assume $r \ge 2$.  
By Theorem \ref{thm:nr(F)=r(F)}, 
we have $\ell(\overline{\m^n}/Q \overline{\m^{n-1}})=1$ 
for all $n=2,\ldots,r$.  
On the other hand, using Theorem \ref{Gor-Main},  we can 
show that  $\ell(\overline{\m^n}/Q \overline{\m^{n-1}}))=
\max\{a-\lceil \frac{an}{b}\rceil, 0\}$ 
for all $n \ge 2$. 
Thus 
\[
a-1=\lceil \frac{2a}{b}\rceil \le \lceil \frac{3a}{b}\rceil \le \cdots \le 
\lceil \frac{ra}{b}\rceil \le  a-1. 
\]
This means $\lceil \frac{2a}{b}\rceil =a-1$. 
Then $a-1 =\lceil \frac{2a}{b}\rceil \le 2$ yields $a \le 3$. 
By a similar argument as in the proof of $(b)$, we obtain the required result. 
It is not so difficult to chech that the converse is also true. 
\end{proof}

\par 
The following example gives a Zariski-type hypersurface 
such that $\nr(\m)$ is maximal but $\ols{G}(\m)$ is {\it not} Gorenstein. 
\begin{exam} \label{Ex3-5nonGor}
Let $m \ge 1$ be an integer, and  
let $A=\mathbb{C}[x, y_1, \ldots,y_m]/(x^3+y_1^5+\cdots+y_m^5)$. 
Put $\m = (x, y_1, \ldots, y_m)$. Then 
\begin{enumerate}
\item[\rm{(a)}] $A_{\m}$ is a rational singularity if and only if $m \ge 4$.  
\item[\rm{(b)}] $\overline{\m^2}=\m^2$, $\overline{\m^3}=\m^3+(x^2)$, 
and $\overline{\m^4}= (y_1, y_2, \ldots, y_m) \overline{\m^3}$. 
\item[\rm{(c)}] $\overline{\mathcal{R}'}(\m)=K[xt,y_1t,y_2t,\ldots,y_mt, x^2t^3,t^{-1}]$. 
\item[\rm{(d)}] $\overline{G}(\m)$ is a Cohen-Macaulay ring of $e_0(\overline{G})=3$ and CM type $2$. 
\end{enumerate} 
If we put $X=xt$, $Y_i=y_it$ $(i=1,2,\ldots,m)$ and $Z=x^2t^3$, and $U=t^{-1}$, 
then 
\[
\overline{\mathcal{R}'}(\m) \cong 
K[X,\underline{Y},Z,U]/(X^2-ZU, XZ+\underline{Y^5}U, Z^2+X\underline{Y^5}). 
\]
In particular, 
\[
\overline{G}(\m) \cong 
\overline{\mathcal{R}'}(\m)/t^{-1}\overline{\mathcal{R}'}(\m)
\cong K[X,\underline{Y},Z]/(X^2, XZ, Z^2+X\underline{Y^5}),
\]
where $\underline{Y^5}=Y_1^5+\cdots+Y_m^5$. 
\end{exam}

\begin{exam}
Let $A = K \llbracket t^a, t^b \rrbracket$ be a semigroup ring generated by the positive integers $a \leq b$ such that $\gcd(a,b)=1$. We know that $A \cong k\llbracket x, y \rrbracket/(x^a- y^b)$ is a hypersurface. By Theorem \ref{S1-Main} $\ol{r}_{Q}(\m) = \lfloor \frac{(a-1)b}{a}\rfloor$. 
By Theorem \ref{Gor-Main}, $\ol{G}(\m)$ is Gorenstein if and only if $b \equiv  1 \pmod{a}$.
\end{exam}
\par 
The examples \ref{Exam:3and4} and \ref{Exam:2andm} 
can also be verified using Theorem \ref{Gor-Main}.

\bigskip
\section{Normal Tangent cones of maximal embedding dimension}\label{sec:embdimGm}

\par \vspace{2mm}
Let $\MM_{\overline{G}}$ denote the unique graded maximal ideal of 
$\overline{G}=\overline{G}(\m)$. 
Then $\emb(\overline{G})$, the \textit{embedding dimension} of $\overline{G}$,  
is the number of minimal system of generators of $\MM_{\overline{G}}$. 
Since $\overline{G}$ is Cohen-Macaulay, 
\begin{equation} \label{Sally}
\emb(\overline{G}) \le e_0(\overline{G})+\dim \overline{G}-1=a+m-1
\end{equation}
If equality holds in Eq.(\ref{Sally}), then $\overline{G}$ is said to 
have \textit{maximal embedding dimension} (in the sense of Sally). 
Note that $\MM_{\overline{G}}$ is generated by 
$\{x^kt^{n_k}\}_{k=1,2,\ldots,a-1}$, and $y_1t,y_2t,\ldots, y_m t$.

\par \vspace{2mm}
We can characterize those $\overline{G}$ in terms of $a$ and $b$. 

\begin{prop} \label{MaxEmb}
Let $A=S/(x^a-g(\underline{y}))$ be a Zariski hypersurface with 
$b=\ord_{(\underline{y})}(g) \ge a$.   
Then $\overline{G}(\m)$ has maximal embedding dimension 
if and only if 
one of the following cases occurs$:$
\begin{enumerate}
\item[\rm{(a)}] $a=2$. 
\item[\rm{(b)}] $a \ge 3$,  and $b\equiv a-1 \pmod{a}$.  
\end{enumerate}
When this is the case, 
$\overline{G}(\m)$ is Gorenstein if and only if $a=2$. 
\end{prop}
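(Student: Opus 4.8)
The plan is to compute $\mu(\MM_{\overline{G}})$ by passing to the Artinian reduction $\overline{G}/(\underline{Y})$, where $\underline{Y}=y_1t,\dots,y_mt$, which reduces ``maximal embedding dimension'' to a numerical condition on the integers $n_k=\lfloor kb/a\rfloor$; the Gorenstein assertion will then follow from Theorem \ref{Gor-Main}.

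First, recall that $\MM_{\overline{G}}$ is generated by the $m+a-1$ elements $y_it$ $(1\le i\le m)$ and $Z_k:=x^kt^{n_k}$ $(1\le k\le a-1)$, a number equal to $e_0(\overline{G})+\dim\overline{G}-1$; since the $y_it$ are moreover linearly independent in $(\MM_{\overline{G}}/\MM_{\overline{G}}^2)_1=\m/\overline{\m^2}$, the ring $\overline{G}$ has maximal embedding dimension precisely when this generating set is minimal. As $\overline{\mathcal{R}'}(\m)=K[xt,\underline{y}t,\{x^kt^{n_k}\}_{1\le k\le a-1},t^{-1}]$ is module-finite over $K[\underline{y}t,t^{-1}]$, the ring $\overline{G}=\overline{\mathcal{R}'}(\m)/(t^{-1})$ is module-finite over $K[\underline{Y}]$, hence (being Cohen--Macaulay of dimension $m$) a free $K[\underline{Y}]$-module of rank $e_0(\overline{G})=a$; thus $\underline{Y}$ is $\overline{G}$-regular, $\ell(\overline{G}/(\underline{Y}))=a$, and $\mu(\MM_{\overline{G}})=m+\mu(\MM_{\overline{G}}/(\underline{Y}))$. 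Now $\overline{G}/(\underline{Y})\cong\overline{\mathcal{R}'}(\m)/(t^{-1},\underline{y}t)$, and using the identities $Z_jZ_k=Z_{j+k}(t^{-1})^{\,n_{j+k}-n_j-n_k}$ for $j+k\le a-1$ (valid since $n_{j+k}\ge n_j+n_k$) and $x^a=g(\underline{y})\in(\underline{y})^b$, I would check that in $\overline{G}/(\underline{Y})$ the image $\bar Z_j\bar Z_k$ equals $\bar Z_{j+k}$ when $j+k\le a-1$ and $n_{j+k}=n_j+n_k$, and is $0$ in every other case. Hence $\overline{G}/(\underline{Y})$ is spanned by $1,\bar Z_1,\dots,\bar Z_{a-1}$, and the length count forces these $a$ elements to be a $K$-basis; consequently $(\MM_{\overline{G}}/(\underline{Y}))^2$ is the span of those $\bar Z_k$ for which $n_j+n_{k-j}=n_k$ for some $1\le j\le k-1$, so
\[
\mu(\MM_{\overline{G}})=m+(a-1)-\#\{\,k : n_j+n_{k-j}=n_k\ \text{for some}\ 1\le j\le k-1\,\}.
\]
Therefore $\overline{G}$ has maximal embedding dimension if and only if $n_j+n_{k-j}<n_k$ for all $2\le k\le a-1$ and $1\le j\le k-1$. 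The delicate step is this structural description of $\overline{G}/(\underline{Y})$: carefully tracking the products $Z_jZ_k$ and the wrap-around relation $x^a=g(\underline{y})$ modulo $(t^{-1},\underline{y}t)$ is what must be done, and $\ell(\overline{G}/(\underline{Y}))=e_0(\overline{G})=a$ is precisely what upgrades a spanning set of size $a$ to a basis without any direct computation in the ideals $\overline{\m^n}$.

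Next I would convert the numerical inequality into a congruence on $b$. For $a=2$ the range $2\le k\le a-1$ is empty, so maximal embedding dimension always holds. Assume $a\ge3$ and write $b=qa+s$ with $0\le s\le a-1$, so $n_1=q$ and $n_k=kq+\lfloor ks/a\rfloor$. If $s=a-1$, then $\lfloor ks/a\rfloor=k-1$ for $1\le k\le a-1$, so $n_j+n_{k-j}=n_k-1<n_k$ and maximal embedding dimension holds. Conversely, specializing the inequality to $j=1$ gives $n_k-n_{k-1}\ge n_1+1$ for $2\le k\le a-1$; summing over $k$ yields $\lfloor(a-1)s/a\rfloor=n_{a-1}-(a-1)q\ge a-2$, while $(a-1)s/a\le(a-1)^2/a<a-1$ forces $\lfloor(a-1)s/a\rfloor\le a-2$. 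Hence $(a-1)s\ge a(a-2)$, which for $a\ge3$ cannot hold when $s\le a-2$; therefore $s=a-1$, i.e.\ $b\equiv a-1\pmod a$.

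Finally, for the Gorenstein assertion I would invoke Theorem \ref{Gor-Main}, which states that $\overline{G}$ is Gorenstein if and only if $b\equiv 0$ or $d\pmod a$, where $d=\gcd(a,b)$. If $a=2$, then $b\equiv 0$ or $1\pmod 2$, and in both cases this coincides with $b\equiv 0$ or $d\pmod 2$, so $\overline{G}$ is Gorenstein. If $a\ge3$ and $b\equiv a-1\pmod a$, then $d=\gcd(a,a-1)=1$, while $b\equiv 0\pmod a$ is impossible and $b\equiv 1\pmod a$ would force $a\mid 2$; hence $\overline{G}$ is not Gorenstein, which gives the last assertion.
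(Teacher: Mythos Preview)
Your argument is correct, and it takes a genuinely different route from the paper's. The paper proceeds by direct inspection: for $b\equiv a-1\pmod a$ it computes $n_k=kn_1+k-1$ and asserts minimality of the generating set, while for the converse it splits into two cases, $d>1$ and $d=1$, exhibiting in each an explicit factorization $x^it^{n_i}\in\MM_{\overline{G}}^2$ (the second via a pigeonhole argument on the differences $m_k=n_k-kn_1$). Your approach is more structural: you pass to the Artinian reduction $\overline{G}/(\underline{Y})$, use the explicit description $\overline{\m^n}=\bigoplus_{k=0}^{a-1}x^kQ^{n-n_k}$ from Theorem~\ref{S1-Main}(a) to see that $\overline{G}$ is free of rank $a$ over $K[\underline{Y}]$ with basis $\{Z_k\}$, and thereby reduce maximal embedding dimension to the purely arithmetic condition $n_j+n_{k-j}<n_k$ for all relevant $j,k$; the telescoping sum with $j=1$ then pins down $s=a-1$ cleanly. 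What your method buys is a uniform criterion that handles both directions at once and avoids the case split on $d$; what the paper's method buys is that one never needs to justify the ring structure of $\overline{G}/(\underline{Y})$ or the freeness over $K[\underline{Y}]$, since the relevant factorizations are written down directly in $\overline{\mathcal{R}'}(\m)$. Your treatment of the Gorenstein assertion via Theorem~\ref{Gor-Main} matches the paper's intent.
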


\begin{proof}
In the case of $a=2$, since $\overline{G}$ is a hypersurface of $e_0(\overline{G})=2$, it is a Gorenstein ring having maximal embedding dimension. So we may assume that $a \ge 3$. 
\par \vspace{2mm} \par \noindent 
$\rm{(b)} \Longrightarrow \rm{(a)}:$ 
We may compute $n_k$ as follows:
\[
n_k
= \lfloor \frac{k(n_1a+a-1)}{a} \rfloor
=kn_1+k+\lfloor \frac{-k}{a} \rfloor =kn_1+k-\lceil \frac{k}{a} \rceil
= kn_1+k-1
\]
for every $k=1,2,\ldots,a-1$. 
Then a sequence 
\[
xt^{n_1},\,x^2t^{2n_1+1},\, x^{3}t^{3n_1+2}, \, \ldots, \, 
x^{a-1}t^{(a-1)n_1+a-2},\; y_1t,\ldots,y_mt
\]
forms a minimal system of generators of $\MM_{\overline{G}}$, we have 
\[
\emb(\overline{G})=a+m-1=e_0(\overline{G})+\dim \overline{G}-1. 
\]
$\rm{(a)} \Longrightarrow \rm{(b)}:$ 
In order to complete the proof, we prove the following two lemmata.

\begin{lem} \label{MaxEmb-1}
If $d>1$ and $a \ge 3$, then $\overline{G}$ does not have maximal embedding dimension. 
\end{lem}

\begin{proof}
We note that $\MM_{\overline{G}}=(\{x^kt^{n_k}\}_{k=1,2,\ldots,a-1},
y_1t,\ldots,y_mt)$. 
For every $s$ $(1 \le s \le d)$, $\delta$ $(1 \le \delta \le a'-1)$, 
we have 
\[
n_{sa'+\delta}=\lfloor \frac{(sa'+\delta)b}{a} \rfloor
=\lfloor \frac{(sa'+\delta)b'}{a'} \rfloor
=sb'+\lfloor \frac{\delta b'}{a'} \rfloor
=sn_{a'}+n_{\delta}, 
\]
and hence 
\[
x^{sa'+\delta}t^{n_{sa'+\delta}} = x^{sa'+\delta}t^{sn_{a'}+n_{\delta}}
=(x^{a'}t^{n_{a'}})^s \cdot (x^{\delta}t^{n_{\delta}}) \in \MM_{\overline{G}}^2. 
\]
Moreover, if $s \ge 2$, then 
\[
x^{sa'}t^{n_{sa'}} = x^{sa'}t^{sn_{a'}} = (x^{a'}t^{n_{a'}})^s \in   \MM_{\overline{G}}^2. 
\]
Hence $\MM_{\overline{G}}=(\{x^kt^{n_k}\}_{k=1,2,\ldots,a'},y_1t,\ldots,y_mt)$.
Since $a \ge 3$ and $d \ge 2$, we obtain that 
\[
\emb(\overline{G})\le a'+m \le \frac{a}{2}+m < (a-1)+m=a+m-1, 
\]
as required. 
\end{proof}

\begin{lem} \label{MaxEmb-2}
If $d=1$ and $1 \le \delta \le a-2$, then $\overline{G}$ does not have maximal embedding dimension. 
\end{lem}

\begin{proof}
For each $k$ $(1 \le k \le a-1)$, if we put $m_k=n_k-kn_1$, we have 
\[
m_k= n_{k}-kn_1 =\lfloor \frac{kb}{a} \rfloor -kn_1
=\lfloor \frac{k(n_1a+\delta)}{a} \rfloor -kn_1
=\lfloor \frac{k \delta}{a} \rfloor. 
\]
Since $1 \le \delta \le a-2$ and $a \ge 3$, we get 
\[
m_{a-1} = \lfloor \frac{(a-1)\delta}{a} \rfloor
\le \lfloor \frac{(a-1)(a-2)}{a} \rfloor
=a-3+ \lfloor \frac{2}{a} \rfloor=a-3. 
\]
Then since $0 \le m_1 \le m_2 \le \cdots \le m_{a-1} \le a-3$, 
there exists an integer $i$ with $2 \le i \le a-1$ such that 
$m_{i-1}=m_i$ by Pigeonhole principle. 
Since $n_1+n_{i-1}=n_1+\left\{(i-1)n_1+m_{i-1} \right\} = in_1+m_i=n_i$, 
we get
\[
x^it^{n_i} = x^it^{n_1+n_{i-1}}=(xt^{n_1})(x^{i-1}t^{n_{i-1}}) \in \MM_{\overline{G}}^2. 
\]
This implies $\emb(\overline{G}) \le a+m-2 < e_0(\overline{G})+\dim \overline{G}-1$, 
that is, $\overline{G}$ does not have maximal embedding dimension. 
\end{proof}

\par \vspace{3mm}
By the lemmata as above, we complete the proof of Proposition \ref{MaxEmb}. 
\end{proof}

\section*{Acknowledgements}
The authors thank M. E. Rossi for helpful discussions.
Moreover, the authors are grateful to the referee for the careful reading and valuable coments.

\bibliography{v3}

@unpublished{OWY7,
	title = {Gorenstein Normal tangent cones of integrally closed ideals in two-dimensional normal singularities},
	author = {Okuma, Tomohiro and Watanabe, Kei-ichi and Yoshida, Kenichi},
    note= {submitted},
}

@article{sa,
	title = {Cohen-{M}acaulay local rings of maximal embedding dimension},
	volume = {56},
	issn = {00218693},
	url = {https://linkinghub.elsevier.com/retrieve/pii/0021869379903314},
	doi = {10.1016/0021-8693(79)90331-4},
	pages = {168--183},
	number = {1},
	journal = {Journal of Algebra},
	shortjournal = {Journal of Algebra},
	author = {Sally, Judith D},
	urldate = {2023-10-13},
	year = {1979},
}

@unpublished{OWY6,
	title = {Gorensteinness for normal tangent cones of elliptic ideals},
	rights = {Creative Commons Attribution 4.0 International},
	url = {https://arxiv.org/abs/2302.07991},
	doi = {10.48550/ARXIV.2302.07991},
	abstract = {Let \$A\$ be a two-dimensional excellent normal Gorenstein local domain. In this paper, we characterize elliptic ideals \$I {\textbackslash}subset A\$ for its normal tangent cone \${\textbackslash}overline\{G\}(I)\$ to be Gorenstein. Moreover, we classify all those ideals in a Gorenstein elliptic singularity in the characteristic zero case.},
	author = {Okuma, Tomohiro and Watanabe, Kei-ichi and Yoshida, Kenichi},
	urldate = {2023-10-13},
	year = {2023},
	keywords = {13G05, 14J17, 13H10, 14J27, Algebraic Geometry (math.{AG}), Commutative Algebra (math.{AC}), {FOS}: Mathematics},
    note = {to appear in Journal of Pure and Applied Algebra, available at https://arxiv.org/abs/2302.07991},
}

@article{It,
	title = {Integral closures of ideals generated by regular sequences},
	volume = {117},
	issn = {00218693},
	url = {https://linkinghub.elsevier.com/retrieve/pii/0021869388901147},
	doi = {10.1016/0021-8693(88)90114-7},
	pages = {390--401},
	number = {2},
	journal= {Journal of Algebra},
	shortjournal = {Journal of Algebra},
	author = {Itoh, Shiroh},
	urldate = {2023-10-13},
	year = {1988},
}

@article{S1,
title = {Cohen-{M}acaulay local rings of embedding dimension e +d-2},
journal = {Journal of Algebra},
volume = {83},
number = {2},
pages = {393-408},
year = {1983},
issn = {0021-8693},
doi = {https://doi.org/10.1016/0021-8693(83)90226-0},
url = {https://www.sciencedirect.com/science/article/pii/0021869383902260},
author = {Judith D Sally}
}

@article{GI,
	title = {Gorenstein graded rings associated to ideals},
	volume = {294},
	issn = {00218693},
	url = {https://linkinghub.elsevier.com/retrieve/pii/S0021869305002322},
	doi = {10.1016/j.jalgebra.2005.03.035},
	pages = {373--407},
	number = {2},
	journal = {Journal of Algebra},
	author = {Goto, Shiro and Iai, Shin-Ichiro},
	urldate = {2023-10-13},
	year = {2005},
}

@article{JV95,
	title = {Local cohomology of {R}ees algebras and {H}ilbert functions},
	volume = {123},
	issn = {0002-9939, 1088-6826},
	url = {https://www.ams.org/proc/1995-123-01/S0002-9939-1995-1217453-X/},
	doi = {10.1090/S0002-9939-1995-1217453-X},
	abstract = {Let 
              I 
              be an ideal primary to the maximal ideal in a local ring. We utilize two well-known theorems due to J.-P. Serre to prove that the difference between the Hilbert function and the Hilbert polynomial of 
              I 
              is the alternating sum of the graded pieces of the graded local cohomology (with respect to its positively-graded ideal) of the Rees ring of 
              I 
              . This gives new insight into the higher Hilbert coefficients of 
              I 
              . The result is inspired by one due to J. D. Sally in dimension two and is implicit in a paper by D. Kirby and H. A. Mehran, where very different methods are used.},
	pages = {1--10},
	number = {1},
	journal = {Proceedings of the American Mathematical Society},
	shortjournal = {Proc. Amer. Math. Soc.},
	author = {Johnston, Bernard and Verma, Jugal},
	urlyear = {2023-10-13},
	year = {1995},
}

@unpublished{MMV12,
      title={Normal {H}ilbert Polynomials : A survey}, 
      author={Mousumi Mandal and Shreedevi Masuti and J. K. Verma},
      year={2012},
      url = {https://arxiv.org/abs/1205.3342},
      eprint={1205.3342},
      archivePrefix={arXiv},
      primaryClass={math.AC},
      note = {Preprint available at https://arxiv.org/abs/1205.3342}
}

@article{OWY2016,
	title = {Good ideals and $p_g$-ideals in two-dimensional normal singularities},
	volume = {150},
	issn = {},
	url = {http://link.springer.com/10.1007/s40306-018-00311-4},
	doi = {10.1007/s00229-016-0821-7},
	pages = {499--520},
	number = {3},
	journal = {manuscipta mathematica},
	shortjournal = {manus. math.},
	author = {Okuma, Tomohiro and Watanabe, Kei-ichi and Yoshida, Kenichi},
	urldate = {2024-8-26},
	year = {2016},
}

@article{OWY2019,
	title = {Normal Reduction Numbers for Normal Surface Singularities with Application to Elliptic Singularities of {B}rieskorn Type},
	volume = {44},
	issn = {0251-4184, 2315-4144},
	url = {http://link.springer.com/10.1007/s40306-018-00311-4},
	doi = {10.1007/s40306-018-00311-4},
	pages = {87--100},
	number = {1},
	journal = {Acta Mathematica Vietnamica},
	shortjournal = {Acta Math Vietnam},
	author = {Okuma, Tomohiro and Watanabe, Kei-ichi and Yoshida, Kenichi},
	urldate = {2023-10-13},
	year = {2019},
}

@article{Hyry,
	title = {On the {G}orenstein property of the associated graded ring of a power of an ideal},
	volume = {80},
	issn = {0025-2611, 1432-1785},
	url = {http://link.springer.com/10.1007/BF03026533},
	doi = {10.1007/BF03026533},
	pages = {13--20},
	number = {1},
	journal = {Manuscripta Mathematica},
	shortjournal = {Manuscripta Math},
	author = {Hyry, Eero},
	urldate = {2023-10-13},
	year = {1993},
}

@article{Ooishi,
	title = {On the {G}orenstein Property of the Associated Graded Ring and the {R}ees Algebra of an Ideal},
	volume = {155},
	issn = {00218693},
	url = {https://linkinghub.elsevier.com/retrieve/pii/S0021869383710513},
	doi = {10.1006/jabr.1993.1051},
	pages = {397--414},
	number = {2},
	journal = {Journal of Algebra},
	shortjournal = {Journal of Algebra},
	author = {Ooishi, A.},
	urldate = {2023-10-13},
	year = {1993},
}

@phdthesis{Mar89,
	title = {Hilbert functions of ideals in {C}ohen-{M}acaulay rings},
	school = {Purdue University},
    type = {{P}h. {D}. {T}hesis},
	author = {Marley, Thomas},
	year = {1989},
}

@incollection{Sally1983,
	edition = {1},
	title = {Reductions, local cohomology and {H}ilbert functions of local rings},
	isbn = {9780521271257 9781107325517},
	url = {https://www.cambridge.org/core/product/identifier/CBO9781107325517A027/type/book_part},
	pages = {231--241},
	book = {Commutative Algebra},
	publisher = {Cambridge University Press},
	author = {Sally, Judith D.},
	editor = {Sharp, R. Y.},
	urldate = {2023-10-13},
	year = {1983},
	doi = {10.1017/CBO9781107325517.018},
}

@article{ORWY2022,
	title = {Normal {H}ILBERT COEFFICIENTS AND ELLIPTIC IDEALS IN NORMAL TWO-DIMENSIONAL SINGULARITIES},
	volume = {248},
	issn = {0027-7630, 2152-6842},
	url = {https://www.cambridge.org/core/product/identifier/S0027763022000058/type/journal_article},
	doi = {10.1017/nmj.2022.5},
	abstract = {Abstract.},
	pages = {779--800},
	journal = {Nagoya Mathematical Journal},
	shortjournal = {Nagoya Math. J.},
	author = {Okuma, Tomohiro and Rossi, Maria Evelina and Watanabe, Kei-ichi and Yoshida, Ken-ichi},
	urldate = {2023-10-13},
	year = {2022},
}

@article{HKU2005,
	title = {The {G}orenstein and complete intersection properties of associated graded rings},
	volume = {201},
	issn = {00224049},
	url = {https://linkinghub.elsevier.com/retrieve/pii/S0022404904002968},
	doi = {10.1016/j.jpaa.2004.12.047},
	pages = {264--283},
	number = {1},
	journal = {Journal of Pure and Applied Algebra},
	shortjournal = {Journal of Pure and Applied Algebra},
	author = {Heinzer, William and Kim, Mee-Kyoung and Ulrich, Bernd},
	urldate = {2023-10-13},
	year = {2005},
}

@article{HKU,
	title = {The {C}ohen-{M}acaulay and {G}orenstein Properties of Rings Associated to Filtrations},
	volume = {39},
	issn = {0092-7872, 1532-4125},
	url = {http://www.tandfonline.com/doi/abs/10.1080/00927872.2011.552081},
	doi = {10.1080/00927872.2011.552081},
	pages = {3547--3580},
	number = {10},
	journal = {Communications in Algebra},
	shortjournal = {Communications in Algebra},
	author = {Heinzer, William and Kim, Mee-Kyoung and Ulrich, Bernd},
	urldate = {2023-10-13},
	year = {2011},
	langid = {english},
}

@book{BH98,
	edition = {{R}evised},
	title = {Cohen-{M}acaulay Rings},
	isbn = {9780521566742 9780511608681},
	url = {https://www.cambridge.org/core/product/identifier/9780511608681/type/book},
	abstract = {In the last two decades Cohen-Macaulay rings and modules have been central topics in commutative algebra. This book meets the need for a thorough, self-contained introduction to the homological and combinatorial aspects of the theory of Cohen-Macaulay rings, Gorenstein rings, local cohomology, and canonical modules. A separate chapter is devoted to Hilbert functions (including Macaulay's theorem) and numerical invariants derived from them. The authors emphasize the study of explicit, specific rings, making the presentation as concrete as possible. So the general theory is applied to Stanley-Reisner rings, semigroup rings, determinantal rings, and rings of invariants. Their connections with combinatorics are highlighted, e.g. Stanley's upper bound theorem or Ehrhart's reciprocity law for rational polytopes. The final chapters are devoted to Hochster's theorem on big Cohen-Macaulay modules and its applications, including Peskine-Szpiro's intersection theorem, the Evans-Griffith syzygy theorem, bounds for Bass numbers, and tight closure. Throughout each chapter the authors have supplied many examples and exercises which, combined with the expository style, will make the book very useful for graduate courses in algebra. As the only modern, broad account of the subject it will be essential reading for researchers in commutative algebra.},
	publisher = {Cambridge University Press},
	author = {Bruns, Winfried and Herzog, Jürgen},
	urldate = {2023-10-13},
	year = {1998},
	doi = {10.1017/CBO9780511608681},
}

@article{HucMar,
	title = {Hilbert Coefficients and the Depths of Associated Graded Rings},
	volume = {56},
	issn = {00246107},
	url = {http://doi.wiley.com/10.1112/S0024610797005206},
	doi = {10.1112/S0024610797005206},
	pages = {64--76},
	number = {1},
	journal = {Journal of the London Mathematical Society},
	shortjournal = {Journal of the London Mathematical Society},
	author = {Huckaba, Sam and Marley, Thomas},
	urldate = {2023-10-25},
	year = {1997},
}

@article{Rees,
	title = {A Note on Analytically Unramified Local Rings},
	volume = {s1-36},
	issn = {00246107},
	url = {http://doi.wiley.com/10.1112/jlms/s1-36.1.24},
	doi = {10.1112/jlms/s1-36.1.24},
	pages = {24--28},
	number = {1},
	journal = {Journal of the London Mathematical Society},
	shortjournal = {Journal of the London Mathematical Society},
	author = {Rees, D.},
	urldate = {2023-10-17},
	year = {1961},
}

@book{HS06,
    AUTHOR = {Huneke, Craig and Swanson, Irena},
     TITLE = {Integral closure of ideals, rings, and modules},
    SERIES = {London Mathematical Society Lecture Note Series},
    VOLUME = {336},
 PUBLISHER = {Cambridge University Press, Cambridge},
      YEAR = {2006},
     PAGES = {xiv+431},
      ISBN = {978-0-521-68860-4; 0-521-68860-4},
   MRCLASS = {13B22 (13A18 13A30 13A35 13H15 14A05)},
  MRNUMBER = {2266432},
MRREVIEWER = {Liam\ O'Carroll},
}

@phdthesis{Bla97,
	title = {Hilbert functions: Combinatorial and Homological aspects},
	school = {Universitat de Barcelona},
	type = {{P}h. {D}. {T}hesis},
	author = {Blancafort, Cristina},
	year = {1997},
}

@article {Lip94,
    AUTHOR = {Lipman, Joseph},
     TITLE = {Cohen-{M}acaulayness in graded algebras},
   JOURNAL = {Math. Res. Lett.},
  FJOURNAL = {Mathematical Research Letters},
    VOLUME = {1},
      YEAR = {1994},
    NUMBER = {2},
     PAGES = {149--157},
      ISSN = {1073-2780},
   MRCLASS = {13A30 (13H10 14B15)},
  MRNUMBER = {1266753},
       DOI = {10.4310/MRL.1994.v1.n2.a2},
       URL = {https://doi.org/10.4310/MRL.1994.v1.n2.a2},
}

@book {GN94,
    AUTHOR = {Goto, Shiro and Nishida, Koji},
     TITLE = {The {C}ohen-{M}acaulay and {G}orenstein {R}ees algebras
              associated to filtrations},
      NOTE = {Mem. Amer. Math. Soc. {\bf 110} (1994), no. 526},
 PUBLISHER = {American Mathematical Society, Providence, RI},
      YEAR = {1994},
     PAGES = {i--viii and 1--134},
      ISSN = {0065-9266,1947-6221},
   MRCLASS = {13-06},
  MRNUMBER = {1287443},
}

@article {Hun87,
    AUTHOR = {Huneke, Craig},
     TITLE = {Hilbert functions and symbolic powers},
   JOURNAL = {Michigan Math. J.},
  FJOURNAL = {Michigan Mathematical Journal},
    VOLUME = {34},
      YEAR = {1987},
    NUMBER = {2},
     PAGES = {293--318},
      ISSN = {0026-2285,1945-2365},
   MRCLASS = {13H15 (13H10 14M05)},
  MRNUMBER = {894879},
MRREVIEWER = {Ngo Viet Trung},
       DOI = {10.1307/mmj/1029003560},
       URL = {https://doi.org/10.1307/mmj/1029003560},
}
\bibliographystyle{abbrv}

\end{document}